\providecommand{\U}[1]{\protect\rule{.1in}{.1in}}
\newtheorem{theorem}{Theorem}[section]
\newtheorem{proposition}[theorem]{Proposition}
\newtheorem{lemma}[theorem]{Lemma}
\theoremstyle{definition}
\newtheorem{problem}{Problem}
\newtheorem{definition}[theorem]{Definition}
\begin{document}

\title{Regularity principle in sequence spaces \\and applications}
\author{D. Pellegrino, \quad J. Santos, \quad D. Serrano-Rodr\'iguez, \quad E. Teixeira}
\date{}
\maketitle

\begin{abstract}
We prove a nonlinear regularity principle in sequence spaces which produces
universal estimates for special series defined therein. Some consequences are
obtained and, in particular, we establish new inclusion theorems for multiple summing operators. Of independent interest, we settle all Grothendieck's type $(\ell_{1},\ell_{2})$ theorems for multilinear operators. We further employ the new regularity principle to solve the classification
problem concerning all pairs of admissible exponents in the anisotropic Hardy--Littlewood inequality.

\end{abstract}
\tableofcontents



\section{Introduction}

Regularity arguments are fundamental tools in the analysis of a variety of
problems as they often pave the way to important discoveries in the realm of
mathematics and its applications. Regularity results may appear in many
different configurations, sometimes quite explicitly as in the theory of
diffusive PDEs, sometimes in a more subtle form, and in this article we are
interested in the following universality problem, which drifts a hidden
regularity principle in it:


\begin{problem}
\label{Univ. Problem} Let $p\geq1$ be a real number, $X,Y,W_{1},W_{2}$ be
non-void sets, $Z_{1},Z_{2},Z_{3}$ be normed spaces and $f\colon X\times
Y\rightarrow Z_{1},\ g\colon X\times W_{1}\rightarrow Z_{2},\ h\colon Y\times
W_{2}\rightarrow Z_{3}$ be particular maps. Assume there is a constant $C>0$
such that%
\begin{equation}
{%
{\textstyle\sum\limits_{i=1}^{m_{1}}}
{\textstyle\sum\limits_{j=1}^{m_{2}}}
\left\Vert f(x_{i},y_{j})\right\Vert ^{p}\leq C\left(  \sup_{w\in W_{1}}%
{\textstyle\sum\limits_{i=1}^{m_{1}}}
\left\Vert g(x_{i},w)\right\Vert ^{p}\right)  \cdot\left(  \sup_{w\in W_{2}}%
{\textstyle\sum\limits_{j=1}^{m_{2}}}
\left\Vert h(y_{j},w)\right\Vert ^{p}\right)  }, \label{ree}%
\end{equation}
for all $x_{i}\in X$, $y_{j}\in Y$ and $m_{1},m_{2}\in\mathbb{N}$. Are there
(universal) positive constants $\epsilon\sim\delta$, and $\tilde{C}%
_{\delta,\epsilon}$ such that%
\begin{equation}
\text{ }\left(  {%
{\textstyle\sum\limits_{i=1}^{m_{1}}}
{\textstyle\sum\limits_{j=1}^{m_{2}}}
}\left\Vert f(x_{i},y_{j})\right\Vert ^{p+\delta}\right)  ^{\frac{1}{p+\delta
}}\leq\tilde{C}_{\delta,\epsilon}\cdot\left(  \sup_{w\in W_{1}}%
{\textstyle\sum\limits_{i=1}^{m_{1}}}
\left\Vert g(x_{i},w)\right\Vert ^{p+\epsilon}\right)  ^{\frac{1}{p+\epsilon}%
}\left(  \sup_{w\in W_{2}}%
{\textstyle\sum\limits_{j=1}^{m_{2}}}
\left\Vert h(y_{j},w)\right\Vert ^{p+\epsilon}\right)  ^{\frac{1}{p+\epsilon}%
}, \label{ree1}%
\end{equation}
for all $x_{i}\in X$, $y_{j}\in Y$ and $m_{1},m_{2}\in\mathbb{N}$?
\end{problem}

It turns out that many classical questions in mathematical analysis,
permeating several different fields, can be framed into the formalism of the
universality Problem \ref{Univ. Problem}. A key observation is that the
existence of a leeway, $\epsilon>0$, of an increment $\delta>0$, and of a
corresponding bound $\tilde{C}_{\delta,\epsilon}>0$ bears a regularity
principle for the orderly problem which often reveals important aspects of the
theory underneath.

In this work, under appropriate assumptions, we solve the universality problem
in a very general setting. This is a flexible, effective tool and we apply it
in the investigation of two central problems in mathematical analysis, namely
inclusion type theorems for summing operators and the solution of
the classification problem in sharp anisotropic Hardy--Littlewood inequality.

The theory of absolutely summing operators plays an important role in the
study of Banach Spaces and Operator Theory, with deep inroads in other areas
of Analysis. Grothendieck's inequality, described by Grothendieck as
\textquotedblleft the fundamental theorem in the metric theory of tensor
products\textquotedblright\ is one of the cornerstones of the theory of
absolutely summing operators, and a fundamental general result in Mathematics
\cite{RR,GG,PISIER}. For linear operators, $p$-summability implies
$q$-summability whenever $1\leq p\leq q.$ More generally, if $1\leq p_{j}\leq
q_{j},$ $j=1,2,$ every absolutely $\left(  p_{1};p_{2}\right)  $-summing
operators is absolutely $\left(  q_{1};q_{2}\right)  $-summing whenever
\[
\frac{1}{p_{2}}-\frac{1}{p_{1}}\leq\frac{1}{q_{2}}-\frac{1}{q_{1}}.
\]
Results of this sort are usually called \textquotedblleft inclusion
results\textquotedblright. In the multilinear setting inclusion results are
more intriguing. For instance, every multiple $p$-summing multilinear operator
is multiple $q$-summing whenever $1\leq p\leq q\leq2$, but this is not valid
beyond the threshold $2.$ Our first application of the regularity principle
provides new inclusion theorems for multiple summing operators overtaking the
barrier $2$. Our proof is based on delicate inclusion properties that follow
as consequence of the general regularity principle we will establish.

The second featured application we carry on pertains to the theory of
anisotropic Hardy-Littlewood inequality. Given numbers $p,q\in\lbrack
2,\infty]$ and a pair of exponents $(a,b)$, one is interested in the existence
of a universal constant $C_{p,q,a,b}\geq1$ such that
\begin{equation}
\left(  \sum_{i=1}^{n}\left(  \sum_{j=1}^{n}\left\vert T(e_{i},e_{j}%
)\right\vert ^{a}\right)  ^{\frac{1}{a}\cdot b}\right)  ^{\frac{1}{b}}\leq
C_{p,q,a,b}\cdot\left\Vert T\right\Vert , \label{inicio}%
\end{equation}
for all bilinear operators $T\colon\ell_{p}^{n}\times\ell_{q}^{n}%
\rightarrow\mathbb{K}$ and all positive integers $n$; {here and henceforth
$\mathbb{K}$ denotes the field of real or complex scalars.} Questions of this
sort are essential in many areas of mathematical analysis and dates back, at
least, to the works of Toeplitz \cite{toe} and Riesz \cite{riesz}. Hardy and
Littlewood, in \cite{hardy}, establish the existence of particular
anisotropic exponents for which \eqref{inicio} holds and since then a key
issue in the theory has been to investigate the optimal range of anisotropic
exponents. As an application of the regularity principle, we obtain a complete
classification of all pairs of anisotropic exponents $(a,b)$ for which
estimate \eqref{inicio} holds, providing henceforth a definitive solution to
the problem. We show that \eqref{inicio} is verified if, and only if, the pair
of anisotropic exponents $(a,b)$ lies in $[\frac{q}{q-1},\infty)\times
\lbrack\frac{pq}{pq-p-q},\infty)$ and verifies
\[
\frac{1}{a}+\frac{1}{b}\leq\frac{3}{2}-\left(  \frac{1}{p}+\frac{1}{q}\right)
.
\]
In the case \eqref{inicio} fails to hold we obtain the precise dimension
blow-up rate. We further comment on generalizations of such results to the
multilinear setting.

The paper is organized as follows. In Section 2 we introduce and prove the
regularity principle --- the \textit{tour of force} of this work. In Section
3, we explore the regularity principle as to establish new inclusion
properties for multiple summing operators. In Section 4 we prove an all-embracing Grothendieck's type $(\ell_1, \ell_2)$ theorem.  Section 5 is devoted to the
solution of the anisotropic Hardy--Littlewood inequality problem for bilinear
forms. We show that an application of the regularity principle classifies all
the admissible anisotropic exponents for which Hardy--Littlewood inequality is
valid. In Section 5 we determine the exact blow-up rate for non-admissible
Hardy--Littlewood exponents, as dimension goes to infinity. In the final section we discuss some new insights
concerning Hardy--Littlewood inequality in the multilinear setting, which may
pave the way to further investigations in the theory.

\section{The Regularity Principle}

\label{Sct RP}

In this Section we will establish a nonlinear regularity principle which
greatly expands the investigation initiated in \cite{advances} concerning
inclusion properties for sums in one index. Regularity results for summability
in multiple indexes, objective of our current study, are rather more
challenging and involve a number of new technical difficulties. Accordantly,
it is indeed a rather more powerful analytical tool and we shall explore its
full strength in the upcoming sections.

Let $\,Z_{1},\,V$ and $W_{1},\,W_{2}$ be arbitrary non-void sets and $Z_{2}$
be a vector space. For $t=1,2,$ let\
\begin{align*}
R_{t}\colon Z_{t}\times W_{t}  &  \longrightarrow\lbrack0,\infty),\text{ and
}\\
S\colon Z_{1}\times Z_{2}\times V  &  \longrightarrow\lbrack0,\infty)
\end{align*}
be arbitrary mappings satisfying%

\begin{align*}
R_{2}\left(  \lambda z,w\right)   &  =\lambda R_{2}\left(  z,w\right)  ,\\
S\left(  z_{1},\lambda z_{2},v\right)   &  =\lambda S\left(  z_{1}%
,z_{2},v\right)
\end{align*}
for all real scalars $\lambda\geq0.$ In addition, all along the paper we adopt the convention $\frac{1}%
{0}=\infty$, $\frac{1}{\infty}=0$, and throughout this Section we
always work in the range $p_{1}\geq1$, and assume%

\begin{equation}
\label{R is bdd}\left(  \sup_{w\in W_{t}}\sum_{j=1}^{m_{t}}R_{t}\left(
z_{t,j},w\right)  ^{p_{1}}\right)  ^{\frac{1}{p_{1}}}<\infty, \quad t=1,2.
\end{equation}

Note these are rather general, weak hypotheses on the governing maps $S,
R_{1}, R_{2}$; in particular no continuity conditions are imposed.

\begin{theorem}
[Regularity Principle]\label{RP} Let $1\leq p_{1}\leq p_{2}<2p_{1}$ and
assume
\[
\left(  \sup_{v\in V}\sum_{i=1}^{m_{1}}\sum_{j=1}^{m_{2}}S(z_{1,i}%
,z_{2,j},v)^{p_{1}}\right)  ^{\frac{1}{p_{1}}}\hspace{-0.2cm}\leq C\left(
\sup_{w\in W_{1}}\sum_{i=1}^{m_{1}}R_{1}\left(  z_{1,i},w\right)  ^{p_{1}%
}\right)  ^{\frac{1}{p_{1}}}\left(  \sup_{w\in W_{2}}\sum_{j=1}^{m_{2}}%
R_{2}\left(  z_{2,j},w\right)  ^{p_{1}}\right)  ^{\frac{1}{p_{1}}},
\]
for all $z_{1,i}\in Z_{1},z_{2,j}\in Z_{2},$ all $i=1,...,m_{1}$ and
$j=1,...,m_{2}$ and $m_{1},m_{2}\in\mathbb{N}$. Then%
\[
\left(  \sup_{v\in V}\sum_{i=1}^{m_{1}}\sum_{j=1}^{m_{2}}S(z_{1,i}%
,z_{2,j},v)^{\frac{p_{1}p_{2}}{2p_{1}-p_{2}}}\right)  ^{\frac{2p_{1}-p_{2}%
}{p_{1}p_{2}}}\hspace{-0.3cm}\leq C\left(  \sup_{w\in W_{1}}\sum_{i=1}^{m_{1}%
}R_{1}\left(  z_{1,i},w\right)  ^{p_{2}}\right)  ^{\frac{1}{p_{2}}}\left(
\sup_{w\in W_{2}}\sum_{j=1}^{m_{2}}R_{2}\left(  z_{2,j},w\right)  ^{p_{2}%
}\right)  ^{\frac{1}{p_{2}}},
\]
for all $z_{1,i}\in Z_{1},z_{2,j}\in Z_{2},$ all $i=1,...,m_{1}$ and
$j=1,...,m_{2}$ and $m_{1},m_{2}\in\mathbb{N}$.
\end{theorem}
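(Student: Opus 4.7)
The plan is to combine a simultaneous weighting of both index variables with a Hardy--Littlewood-type extraction. First I use the homogeneity of $R_{2}$ and of $S$ in its second slot to scale $z_{2,j}\mapsto \nu_{j}z_{2,j}$ for arbitrary $\nu_{j}>0$. For the $z_{1}$-variable, where no homogeneity is available, I use a \emph{multiplicity trick}: replicating each $z_{1,i}$ a positive-integer number $N_{i}$ of times and then passing from positive rationals to positive reals by density (both sides of the hypothesis are linear in the $N_{i}$'s after replication, which is what makes the trick legal). The result is that, for all $\mu_{i},\nu_{j}\ge 0$,
\[
\sup_{v}\sum_{i,j}\mu_{i}\nu_{j}^{p_{1}}S(z_{1,i},z_{2,j},v)^{p_{1}}\le C^{p_{1}}\Big(\sup_{w}\sum_{i}\mu_{i}R_{1}(z_{1,i},w)^{p_{1}}\Big)\Big(\sup_{w}\sum_{j}\nu_{j}^{p_{1}}R_{2}(z_{2,j},w)^{p_{1}}\Big).
\]

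Next I apply H\"older's inequality with conjugate exponents $p_{2}/p_{1}$ and $p_{2}/(p_{2}-p_{1})$ to each of the two factors on the right, upgrading the $p_{1}$-weak norms on $z_{1},z_{2}$ to $p_{2}$-weak norms at the cost of an $\ell^{s}$-factor in $\mu$ and in $\nu^{p_{1}}$, where $s:=p_{2}/(p_{2}-p_{1})$; the hypothesis $p_{2}<2p_{1}$ translates to $s>2$. Setting $\eta_{j}:=\nu_{j}^{p_{1}}$ and $T_{ij}(v):=S(z_{1,i},z_{2,j},v)^{p_{1}}\ge 0$, the combined estimate reads
\[
\sum_{i,j}\mu_{i}\eta_{j}T_{ij}(v)\le C^{p_{1}}K^{p_{1}}\|\mu\|_{\ell^{s}}\|\eta\|_{\ell^{s}},
\]
where $K$ is the product of the two weak-norm factors appearing on the right-hand side of the desired conclusion. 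For each fixed $v$, this says that $T(v)$ defines a bilinear form on $\ell^{s}\times\ell^{s}$ of norm at most $C^{p_{1}}K^{p_{1}}$.

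The final step extracts from this bilinear bound the entrywise estimate $(\sum_{i,j}T_{ij}^{\rho})^{1/\rho}\le C^{p_{1}}K^{p_{1}}$ with $\rho:=s/(s-2)$. I use the test pair
\[
X_{i}^{*}:=\frac{\bigl(\sum_{j'}T_{ij'}^{\rho}\bigr)^{1/s}}{\bigl(\sum_{i',j'}T_{i'j'}^{\rho}\bigr)^{1/s}},\qquad Y_{j}^{*}:=\frac{\bigl(\sum_{i'}T_{i'j}^{\rho}\bigr)^{1/s}}{\bigl(\sum_{i',j'}T_{i'j'}^{\rho}\bigr)^{1/s}},
\]
which a direct calculation shows to have unit $\ell^{s}$-norms. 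Since $T\ge 0$, the trivial bounds $\sum_{j'}T_{ij'}^{\rho}\ge T_{ij}^{\rho}$ and $\sum_{i'}T_{i'j}^{\rho}\ge T_{ij}^{\rho}$, together with the arithmetic identity $2\rho/s+1=\rho$ (equivalent to $\rho=s/(s-2)$, equivalently $1/q+1/p_{1}=2/p_{2}$), produce the pointwise bound $X_{i}^{*}Y_{j}^{*}T_{ij}\ge\bigl(\sum_{i',j'}T_{i'j'}^{\rho}\bigr)^{-2/s}T_{ij}^{\rho}$. Summing yields $\sum X^{*}Y^{*}T\ge\bigl(\sum T^{\rho}\bigr)^{1/\rho}$, and the bilinear bound then forces $\bigl(\sum T^{\rho}\bigr)^{1/\rho}\le C^{p_{1}}K^{p_{1}}$. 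Substituting $T=S^{p_{1}}$, using $\rho=q/p_{1}$, and raising to the $1/p_{1}$-th power delivers the theorem for each $v$, and hence for $\sup_{v}$.

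The main obstacle I anticipate is the last Hardy--Littlewood-type extraction: identifying a test pair that has unit $\ell^{s}$-norms \emph{and} whose product against $T$ majorizes the entrywise $\ell^{\rho}$-norm of $T$ with no loss of constant. The nonnegativity of $T$ is essential, the test pair is rigidly dictated by the exponent relation $\rho=s/(s-2)$, and the strict inequality $p_{2}<2p_{1}$ is precisely what makes $s>2$ and $\rho$ finite.
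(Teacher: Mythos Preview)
Your argument is correct, and it takes a genuinely different route from the paper's. Both proofs begin with the same two ingredients---the multiplicity trick on the $z_{1}$-variable and the positive homogeneity on the $z_{2}$-variable---but they organize the remaining work differently.

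The paper proceeds \emph{sequentially}. It first freezes the $z_{2}$-data inside the constant, uses the multiplicity trick to insert arbitrary weights $\eta_{i}\ge 0$ in the $i$-sum, and then plugs in the self-referential weight $\eta_{i}=S_{1}(z_{1,i},v)^{p_{1}p_{2}/p}$ (with $1/p=1/p_{1}-1/p_{2}$). After H\"older with exponents $p/p_{1}$ and $p_{2}/p_{1}$, one factor on the right matches a power of the left and can be absorbed, upgrading the $R_{1}$-side from $p_{1}$ to $p_{2}$ and producing a mixed $\ell^{p_{2}}(\ell^{p_{1}})$ bound. That is relaxed to $\ell^{p_{2}}(\ell^{p_{2}})$, a Minkowski swap rearranges the two sums, and the whole manoeuvre is repeated on the $z_{2}$-side (this time via homogeneity rather than replication), upgrading $R_{2}$ and reaching the final exponent $q=p_{1}p_{2}/(2p_{1}-p_{2})$.

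You instead proceed \emph{in parallel}: apply both tricks at once to obtain the weighted inequality, immediately H\"older each of the two right-hand factors to reach the $p_{2}$-weak norms, and are left with a bilinear $\ell^{s}\times\ell^{s}$ bound on the nonnegative array $T_{ij}(v)=S(z_{1,i},z_{2,j},v)^{p_{1}}$. The explicit test pair built from the row- and column-$\ell^{\rho}$ masses then extracts $\|T(v)\|_{\ell^{\rho}}$ with no loss of constant; the identity $2\rho/s+1=\rho$ is exactly the algebraic content of $q=p_{1}p_{2}/(2p_{1}-p_{2})$.

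What each approach buys: your packaging is shorter for this particular statement and avoids the intermediate Minkowski rearrangements entirely. The paper's two-stage argument, on the other hand, isolates the one-variable upgrade as a stand-alone result (this is precisely the Anisotropic Regularity Principle stated immediately after), and iterates cleanly to the $k$-variable version; your simultaneous extraction would need a $k$-linear analogue of the test-pair lemma (with $\rho=s/(s-k)$), which is doable but less immediately transparent.
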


\begin{proof}
{Consider $\left(  z_{1,i}\right)  _{i=1}^{m_{1}}$ in $Z_{1}$ and $\left(
z_{2,j}\right)  _{j=1}^{m_{2}}$ in $Z_{2}$ and define, for all $z\in Z_{1}$
and $v\in V$,}
\[
S_{1}(z,v)=\left(  \sum_{j=1}^{m_{2}}S(z,z_{2,j},v)^{p_{1}}\right)  ^{1/p_{1}%
}.
\]
We estimate%
\begin{align*}
\sup_{v\in V}\sum_{i=1}^{m_{1}}S_{1}(z_{1,i},v)^{p_{1}} &  =\sup_{v\in V}%
\sum_{i=1}^{m_{1}}\sum_{j=1}^{m_{2}}S(z_{1,i},z_{2,j},v)^{p_{1}}\\
&  \leq C^{p_{1}}\left(  \sup_{w\in W_{1}}\sum_{i=1}^{m_{1}}R_{1}\left(
z_{1,i},w\right)  ^{p_{1}}\right)  \left(  \sup_{w\in W_{2}}\sum_{j=1}^{m_{2}%
}R_{2}\left(  z_{2,j},w\right)  ^{p_{1}}\right)  \\
&  =C_{1}\sup_{w\in W_{1}}\sum_{i=1}^{m_{1}}R_{1}\left(  z_{1,i},w\right)
^{p_{1}}%
\end{align*}
with%
\[
C_{1}=C^{p_{1}}\sup_{w\in W_{2}}\sum_{j=1}^{m_{2}}R_{2}\left(  z_{2,j}%
,w\right)  ^{p_{1}}.
\]
{We can consider a new sequence in $Z_{1}$ where each term is repeated with a
prescribed frequency. Let $\eta_{i}$ be the number of times each $z_{1,i}$
appears respectively. We have
\begin{equation}
\sum_{i=1}^{m_{1}}\eta_{i}S_{1}(z_{1,i},v)^{p_{1}}\leq C_{1}%
\sup_{w\in W_{1}}\sum_{i=1}^{m_{1}}\eta_{i}R_{1}\left(  z_{1,i},w\right)
^{p_{1}},\label{yr2}%
\end{equation}
for all $z_{1,1},\ldots,z_{1,m_{1}}\in Z_{1}$ and all $v\in V$. Now, passing from integers to
rationals by \textquotedblleft cleaning\textquotedblright\ denominators and
from rationals to real numbers using density, we conclude that (\ref{yr2})
holds for positive real numbers $\eta_{1},\cdots,\eta_{m_{1}}.$ Define
\[
\frac{1}{p}=\frac{1}{p_{1}}-\frac{1}{p_{2}}.
\]
For each $i=1,\cdots,m_{1}$, consider the map $\lambda_{i}\colon
V\rightarrow\lbrack0,\infty)$ by
\[
\lambda_{i}(v):=S_{1}(z_{1,i},v)^{\frac{p_{2}}{p}}.
\]
Hence, we readily have
\begin{align*}
\lambda_{i}(v)^{p_{1}}S_{1}(z_{1,i},v)^{p_{1}} &  =S_{1}(z_{1,i}%
,v)^{\frac{p_{1}p_{2}}{p}}S_{1}(z_{1,i},v)^{p_{1}}\\
&  =S_{1}(z_{1,i},v)^{p_{2}}.
\end{align*}
Recalling that (\ref{yr2}) is valid for arbitrary positive real numbers
$\eta_{i},$ we get, for $\eta_{i}=\lambda_{i}(v)^{p_{1}}$,
\begin{align*}
\sum_{i=1}^{m_{1}}S_{1}(z_{1,i},v)^{p_{2}} &  =\sum_{i=1}^{m_{1}}\lambda
_{i}(v)^{p_{1}}S_{1}(z_{1,i},v)^{p_{1}}\\
&  \leq C_{1}\sup_{w\in W_{1}}\sum_{i=1}^{m_{1}}\lambda_{i}(v)^{p_{1}}%
R_{1}\left(  z_{1,i},w\right)  ^{p_{1}},
\end{align*}
for every $v\in V$. Also, taking into account the relation
\[
\frac{1}{(p/p_{1})}+\frac{1}{(p_{2}/p_{1})}=1,
\]
and H\"older's inequality we obtain
\begin{align*}
\sum_{i=1}^{m_{1}}S_{1}(z_{1,i},v)^{p_{2}} &  \leq C_{1}\sup_{w\in W_{1}}%
\sum_{i=1}^{m_{1}}\lambda_{i}(v)^{p_{1}}R_{1}\left(  z_{1,i},w\right)
^{p_{1}}\\
&  \leq C_{1}\sup_{w\in W_{1}}\left[  \left(  \sum_{i=1}^{m_{1}}\lambda
_{i}(v)^{\frac{p_{1}p}{p_{1}}}\right)  ^{\frac{p_{1}}{p}}\left(  \sum
_{i=1}^{m_{1}}R_{1}\left(  z_{1,i},w\right)  ^{p_{1}\frac{p_{2}}{p_{1}}%
}\right)  ^{\frac{p_{1}}{p_{2}}}\right]  \\
&  =C_{1}\left(  \sum_{i=1}^{m_{1}}\lambda_{i}(v)^{p}\right)  ^{\frac{p_{1}%
}{p}}\sup_{w\in W_{1}}\left(  \sum_{i=1}^{m_{1}}R_{1}\left(  z_{1,i},w\right)
^{p_{2}}\right)  ^{\frac{p_{1}}{p_{2}}}\\
&  =C_{1}\left(  \sum_{i=1}^{m_{1}}S_{1}(z_{1,i},v)^{p_{2}}\right)
^{\frac{p_{1}}{p}}\sup_{w\in W_{1}}\left(  \sum_{i=1}^{m_{1}}R_{1}\left(
z_{1,i},w\right)  ^{p_{2}}\right)  ^{\frac{p_{1}}{p_{2}}}.
\end{align*}
Therefore%
\[
\left(  \sum_{i=1}^{m_{1}}S_{1}(z_{1,i},v)^{p_{2}}\right)  ^{1-\frac{p_{1}}%
{p}}\leq C_{1}\sup_{w\in W_{1}}\left(  \sum_{i=1}^{m_{1}}R_{1}\left(
z_{1,i},w\right)  ^{p_{2}}\right)  ^{\frac{p_{1}}{p_{2}}}%
\]
for every $v\in V,$ and we can finally conclude that
\[
\left(  \sup_{v\in V}\sum_{i=1}^{m_{1}}S_{1}(z_{1,i},v)^{p_{2}}\right)
^{\frac{p_{1}}{p_{2}}}\leq C_{1}\sup_{w\in W_{1}}\left(  \sum_{i=1}^{m_{1}%
}R_{1}\left(  z_{1,i},w\right)  ^{p_{2}}\right)  ^{\frac{p_{1}}{p_{2}}}.
\]
Hence%
\begin{align*}
\left(  \sup_{v\in V}\sum_{i=1}^{m_{1}}S_{1}(z_{1,i},v)^{p_{2}}\right)
^{\frac{1}{p_{2}}} &  \leq C_{1}^{1/p_{1}}\sup_{w\in W_{1}}\left(  \sum
_{i=1}^{m_{1}}R_{1}\left(  z_{1,i},w\right)  ^{p_{2}}\right)  ^{\frac{1}%
{p_{2}}}\\
&  =C\left(  \sup_{w\in W_{2}}\sum_{j=1}^{m_{2}}R_{2}\left(  z_{2,j},w\right)
^{p_{1}}\right)  ^{\frac{1}{p_{1}}}\sup_{w\in W_{1}}\left(  \sum_{i=1}^{m_{1}%
}R_{1}\left(  z_{1,i},w\right)  ^{p_{2}}\right)  ^{\frac{1}{p_{2}}}.
\end{align*}
Recalling the definition of $S_{1}$ we have%
\begin{align*}
&  \left(  \sup_{v\in V}\sum_{i=1}^{m_{1}}\left(  \sum_{j=1}^{m_{2}}%
S(z_{1,i},z_{2,j},v)^{p_{1}}\right)  ^{p_{2}/p_{1}}\right)  ^{\frac{1}{p_{2}}%
}\\
&  \leq C\left(  \sup_{w\in W_{2}}\sum_{j=1}^{m_{2}}R_{2}\left(
z_{2,j},w\right)  ^{p_{1}}\right)  ^{\frac{1}{p_{1}}}\sup_{w\in W_{1}}\left(
\sum_{i=1}^{m_{1}}R_{1}\left(  z_{1,i},w\right)  ^{p_{2}}\right)  ^{\frac
{1}{p_{2}}}.
\end{align*}
Since $p_{1}\leq p_{2}$ we have%
\begin{align*}
&  \left(  \sup_{v\in V}\sum_{i=1}^{m_{1}}\sum_{j=1}^{m_{2}}S(z_{1,i}%
,z_{2,j},v)^{p_{2}}\right)  ^{\frac{1}{p_{2}}}\\
&  \leq C\left(  \sup_{w\in W_{2}}\sum_{j=1}^{m_{2}}R_{2}\left(
z_{2,j},w\right)  ^{p_{1}}\right)  ^{\frac{1}{p_{1}}}\sup_{w\in W_{1}}\left(
\sum_{i=1}^{m_{1}}R_{1}\left(  z_{1,i},w\right)  ^{p_{2}}\right)  ^{\frac
{1}{p_{2}}}.
\end{align*}
Now we look at the above inequality as%
\[
\left(  \sup_{v\in V}\sum_{j=1}^{m_{2}}S_{2}\left(  z_{2,j},v\right)  ^{p_{2}%
}\right)  ^{\frac{1}{p_{2}}}\leq C_{2}\left(  \sup_{w\in W_{2}}\sum
_{j=1}^{m_{2}}R_{2}\left(  z_{2,j},w\right)  ^{p_{1}}\right)  ^{\frac{1}%
{p_{1}}}%
\]
with
\begin{align*}
S_{2}\left(  z,v\right)   &  =\left(  \sum_{i=1}^{m_{1}}S(z_{1,i},z,v)^{p_{2}%
}\right)  ^{\frac{1}{p_{2}}},\\
C_{2} &  =C\sup_{w\in W_{1}}\left(  \sum_{i=1}^{m_{1}}R_{1}\left(
z_{1,i},w\right)  ^{p_{2}}\right)  ^{\frac{1}{p_{2}}}%
\end{align*}
for every }$z\in Z_{2}$ and $v\in V.$ {Since
\begin{align*}
S_{2}\left(  \lambda z,v\right)   &  = \lambda
S_{2}\left(  z,v\right)  ,\\
R_{2}\left(  \lambda z,w\right)   &  = \lambda
R_{2}\left(  z,w\right)  ,
\end{align*}
for all non negative scalars $\lambda$, we can use a somewhat similar argument. Recall that
\[
\frac{1}{p}=\frac{1}{p_{1}}-\frac{1}{p_{2}}%
\]
and note that%
\[
\frac{1}{p}=\frac{1}{p_{2}}-\frac{1}{q},
\]
with%
\[
q=\frac{p_{1}p_{2}}{2p_{1}-p_{2}}.
\]
For each $j=1,\cdots,m_{2}$, consider the map $\vartheta_{j}\colon
V\rightarrow\lbrack0,\infty)$ given by
\[
\vartheta_{j}(v):=S_{2}(z_{2,j},v)^{\frac{q}{p}}.
\]
We find
\begin{align*}
\vartheta_{j}(v)^{p_{2}}S_{2}(z_{2,j},v)^{p_{2}} &  =S_{2}(z_{2,j}%
,v)^{\frac{p_{2}q}{p}}S_{2}(z_{2,j},v)^{p_{2}}\\
&  =S_{2}(z_{2,j},v)^{q}.
\end{align*}
Thus,
\begin{align*}
\left(  \sum_{j=1}^{m_{2}}S_{2}(z_{2,j},v)^{q}\right)  ^{\frac{1}{p_{2}}} &
=\left(  \sum_{j=1}^{m_{2}}S_{2}(\vartheta_{j}(v)z_{2,j},v)^{p_{2}}\right)
^{\frac{1}{p_{2}}}\\
&  \leq C_{2}\sup_{w\in W_{2}}\left(  \sum_{j=1}^{m_{2}}R_{2}\left(
\vartheta_{j}(v)z_{2,j},w\right)  ^{p_{1}}\right)  ^{\frac{1}{p_{1}}}\\
&  =C_{2}\sup_{w\in W_{2}}\left(  \sum_{j=1}^{m_{2}}\vartheta_{j}(v)^{p_{1}%
}R_{2}\left(  z_{2,j},w\right)  ^{p_{1}}\right)  ^{\frac{1}{p_{1}}}\\
&  \leq C_{2}\sup_{w\in W_{2}}\left[  \left(  \sum_{j=1}^{m_{2}}\vartheta
_{j}(v)^{p_{1}\cdot\frac{p}{p_{1}}}\right)  ^{\frac{p_{1}}{p}}\left(
\sum_{j=1}^{m_{2}}R_{2}\left(  z_{2,j},w\right)  ^{p_{1}\cdot\frac{p_{2}%
}{p_{1}}}\right)  ^{\frac{p_{1}}{p_{2}}}\right]  ^{\frac{1}{p_{1}}}\\
&  =C_{2}\sup_{w\in W_{2}}\left(  \sum_{j=1}^{m_{2}}\vartheta_{j}%
(v)^{p}\right)  ^{\frac{1}{p}}\left(  \sum_{j=1}^{m_{2}}R_{2}\left(
z_{2,j},w\right)  ^{p_{2}}\right)  ^{\frac{1}{p_{2}}}.
\end{align*}
Hence%
\[
\left(  \sum_{j=1}^{m_{2}}S_{2}(z_{2,j},v)^{q}\right)  ^{\frac{1}{p_{2}}%
-\frac{1}{p}}\leq C_{2}\sup_{w\in W_{2}}\left(  \sum_{j=1}^{m_{2}}R_{2}\left(
z_{2,j},w\right)  ^{p_{2}}\right)  ^{\frac{1}{p_{2}}},
\]
i.e.,%
\[
\left(  \sum_{j=1}^{m_{2}}S_{2}(z_{2,j},v)^{q}\right)  ^{\frac{1}{q}}\leq
C\sup_{w\in W_{1}}\left(  \sum_{i=1}^{m_{1}}R_{1}\left(  z_{1,i},w\right)
^{p_{2}}\right)  ^{\frac{1}{p_{2}}}\sup_{w\in W_{2}}\left(  \sum_{j=1}^{m_{2}%
}R_{2}\left(  z_{2,j},w\right)  ^{p_{2}}\right)  ^{\frac{1}{p_{2}}}.
\]
We thus have%
\[
\left(  \sum_{j=1}^{m_{2}}\left(  \sum_{i=1}^{m_{1}}S(z_{1,i},z_{2,j}%
,v)^{p_{2}}\right)  ^{\frac{1}{p_{2}}\cdot q}\right)  ^{\frac{1}{q}}\leq
C\sup_{w\in W_{1}}\left(  \sum_{i=1}^{m_{1}}R_{1}\left(  z_{1,i},w\right)
^{p_{2}}\right)  ^{\frac{1}{p_{2}}}\sup_{w\in W_{2}}\left(  \sum_{j=1}^{m_{2}%
}R_{2}\left(  z_{2,j},w\right)  ^{p_{2}}\right)  ^{\frac{1}{p_{2}}},
\]
and since $p_{2}\leq q$, we have%
\[
\left(  \sum_{i=1}^{m_{1}}\sum_{j=1}^{m_{2}}S(z_{1,i},z_{2,j},v)^{q}\right)
^{\frac{1}{q}}\leq C\sup_{w\in W_{1}}\left(  \sum_{i=1}^{m_{1}}R_{1}\left(
z_{1,i},w\right)  ^{p_{2}}\right)  ^{\frac{1}{p_{2}}}\sup_{w\in W_{2}}\left(
\sum_{j=1}^{m_{2}}R_{2}\left(  z_{2,j},w\right)  ^{p_{2}}\right)  ^{\frac
{1}{p_{2}}},
\]
which finally completes the proof of Theorem \ref{RP}.}
\end{proof}

The reasoning developed in the proof of Theorem \ref{RP} can also be employed
as to produce a more general result. Following the previous set-up, let
$k\geq2$ and $Z_{1}$, $V$ and $W_{1},\cdots,W_{k}$ be arbitrary non-void sets
and $Z_{2},\cdots,Z_{k}$ be vector spaces. For $t=1,...,k,$ let\
\begin{align*}
R_{t}\colon Z_{t}\times W_{t}  &  \longrightarrow\lbrack0,\infty),\text{ and
}\\
S\colon Z_{1}\times\cdots\times Z_{k}\times V  &  \longrightarrow
\lbrack0,\infty)
\end{align*}
be arbitrary mappings satisfying%

\begin{align*}
R_{t}\left(  \lambda z,w\right)   &  =\lambda R_{t}\left(  z,w\right)  ,\\
S\left(  z_{1},...,z_{j-1},\lambda z_{j},z_{j+1},...,z_{k},v\right)   &  =
\lambda S\left(  z_{1},...,z_{j-1},z_{j},z_{j+1},...,z_{k},v\right)
\end{align*}
for all scalars $\lambda\geq0$ and all $j,t=2,\cdots,k.$

\begin{theorem}
[Regularity Principle for $k$-variables]\label{RP3} Let $1\leq p_{1}\leq
p_{2}<\frac{kp_{1}}{k-1},$ and assume
\[
\left(  \sup_{v\in V}\sum_{j_{1}=1}^{m_{1}}\cdots\sum_{j_{k}=1}^{m_{k}%
}S(z_{1,j_{1}},...,z_{k,j_{k}},v)^{p_{1}}\right)  ^{\frac{1}{p_{1}}}\leq C%
{\textstyle\prod\limits_{t=1}^{k}}
\left(  \sup_{w\in W_{t}}\sum_{j=1}^{m_{t}}R_{t}\left(  z_{t,j},w\right)
^{p_{1}}\right)  ^{\frac{1}{p_{1}}},
\]
for all $z_{t,j}\in Z_{t},$ all $t=1,\cdots,k,$ all $j_{t}=1,\cdots,m_{t}$ and
$m_{t}\in\mathbb{N}$. Then
\[
\left(  \sup_{v\in V}\sum_{j_{1}=1}^{m_{1}}\cdots\sum_{j_{k}=1}^{m_{k}%
}S(z_{1,j_{1}},...,z_{k,j_{k}},v)^{\frac{p_{1}p_{2}}{kp_{1}-\left(
k-1\right)  p_{2}}}\right)  ^{\frac{kp_{1}-\left(  k-1\right)  p_{2}}%
{p_{1}p_{2}}}\leq C%
{\textstyle\prod\limits_{t=1}^{k}}
\left(  \sup_{w\in W_{t}}\sum_{j=1}^{m_{t}}R_{t}\left(  z_{t,j},w\right)
^{p_{2}}\right)  ^{\frac{1}{p_{2}}},
\]
for all $z_{t,j}\in Z_{t},$ all $t=1,\cdots,k,$ all $j_{t}=1,\cdots,m_{t}$ and
$m_{t}\in\mathbb{N}$.
\end{theorem}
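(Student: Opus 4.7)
My plan is to prove Theorem \ref{RP3} by iterating the one-coordinate improvement used in the proof of Theorem \ref{RP}, applied successively to each of the $k$ variables. Define a nondecreasing sequence of exponents $q_0 = p_1 \leq q_1 \leq \cdots \leq q_k$ by
\[
\frac{1}{q_t} = \frac{t}{p_2} - \frac{t-1}{p_1}, \qquad 0 \leq t \leq k,
\]
so $q_1 = p_2$, the target exponent is $q_k = p_1 p_2 / (k p_1 - (k-1) p_2)$, and the crucial recursion $1/q_{t-1} - 1/q_t = 1/p_1 - 1/p_2$ holds for every $t$. The hypothesis $p_2 < k p_1/(k-1)$ is exactly what guarantees $q_k$ is positive; since $t/(t-1)$ decreases in $t$, the same bound forces $p_2 < t p_1/(t-1)$ for every $2 \leq t \leq k$, keeping each intermediate $q_t$ admissible. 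The inductive claim I carry through $k$ rounds is the inequality with LHS exponent $q_t$ and RHS $R_s$-factors at exponent $p_2$ for $s \leq t$ and at $p_1$ for $s > t$.

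The passage from $t = 0$ to $t = 1$ handles the first coordinate, where neither $S$ nor $R_1$ is assumed homogeneous. Following the opening move of the proof of Theorem \ref{RP}, I collapse the remaining indices into $S_1(z, v) = (\sum_{j_2, \ldots, j_k} S(z, z_{2, j_2}, \ldots, z_{k, j_k}, v)^{p_1})^{1/p_1}$, extend the resulting one-index inequality at exponent $p_1$ to arbitrary positive real weights $\eta_i$ via the repetition-then-density argument from the excerpt, plug in $\eta_i = S_1(z_{1,i}, v)^{p_1 p_2 / p}$ with $1/p = 1/p_1 - 1/p_2$, and conclude by H\"older with conjugate exponents $p/p_1$ and $p_2/p_1$. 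Using the $\ell^{p_2} \hookrightarrow \ell^{p_1}$ inclusion to push the sum over $j_1$ back inside then yields the inductive claim at $t = 1$.

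For $2 \leq t \leq k$, the coordinate $z_t$ is homogeneous for both $S$ and $R_t$, and the step from $q_{t-1}$ to $q_t$ mirrors the second half of the proof of Theorem \ref{RP}. I collapse all indices except $j_t$ into a map $S^{(t)}(z, v)$ that inherits positive homogeneity in $z$, set $\vartheta_{j_t}(v) = S^{(t)}(z_{t, j_t}, v)^{(q_t - q_{t-1})/q_{t-1}}$, and apply the current inductive inequality to the rescaled sequence $(\vartheta_{j_t}(v) z_{t, j_t})_{j_t}$. H\"older with conjugate exponents $p_2/(p_2 - p_1)$ and $p_2/p_1$, combined with the identity $1/q_{t-1} - 1/q_t = 1/p_1 - 1/p_2$, identifies the rescaling factor with $(\sum_{j_t} S^{(t)}(z_{t, j_t}, v)^{q_t})^{1/q_{t-1} - 1/q_t}$, which is absorbed into the left-hand side and lifts the LHS exponent to $q_t$ while upgrading $R_t$ from $p_1$ to $p_2$. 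The inclusion $\|\cdot\|_{q_t} \leq \|\cdot\|_{q_{t-1}}$ then returns the sum over $j_t$ to the inside, delivering the inductive claim at level $t$; iterating to $t = k$ produces the conclusion of the theorem.

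The main obstacle is the exponent bookkeeping across the $k$ iterations: one must verify at every step that the H\"older conjugate pair, the homogeneity rescaling $\vartheta_{j_t}$, and the new exponent $q_t$ cohere through the single recursive identity on $1/q_t - 1/q_{t-1}$, and that both the H\"older exponents and the $\ell^p$-inclusions stay in the admissible range. This is precisely where the hypothesis $p_2 < k p_1/(k-1)$ is consumed (at the final iteration $t = k$); the remainder of the argument is a clean $k$-fold iteration of the two-variable template.
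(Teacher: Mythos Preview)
Your proposal is correct and is precisely the argument the paper has in mind: the paper omits the proof of Theorem \ref{RP3} entirely, indicating only that ``the reasoning developed in the proof of Theorem \ref{RP} can also be employed,'' and your $k$-step iteration---handling the non-homogeneous first coordinate via the repetition/density trick and the remaining $k-1$ homogeneous coordinates via the rescaling $\vartheta_{j_t}(v)$, with the exponent sequence governed by $1/q_{t-1}-1/q_t = 1/p_1-1/p_2$---is exactly that reasoning carried out in full. The exponent bookkeeping, the H\"older pairs, and the admissibility check $p_2 < tp_1/(t-1)$ at each stage are all correct.
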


We omit the details of the proof of Theorem \ref{RP3}. A careful scrutiny of
the second part of the proof of Theorem \ref{RP} and Theorem \ref{RP3} yields
a useful regularity principle itself for anisotropic summability of sequences.
As we shall apply such an estimate in the upcoming sections, we state it as a
separate Theorem.

\begin{theorem}
[Anisotropic Regularity Principle]\label{Ranisot}Let $p_{1},p_{2},r_{1}%
,r_{2}\geq1$ and $p_{3}\geq p_{1}$ and $r_{3}\geq r_{1}$ with%
\[
\frac{1}{r_{1}}-\frac{1}{p_{1}}\leq\frac{1}{r_{3}}-\frac{1}{p_{3}}.
\]
Then%
\begin{align*}
&  \sup_{v\in V}\left(  \sum_{i=1}^{m_{1}}\left(  \sum_{j=1}^{m_{2}}%
S(z_{1,i},z_{2,j},v)^{p_{2}}\right)  ^{\frac{1}{p_{2}}p_{1}}\right)
^{\frac{1}{p_{1}}}\\
&  \leq C\left(  \sup_{w\in W_{1}}\sum_{i=1}^{m_{1}}R_{1}\left(
z_{1,i},w\right)  ^{r_{1}}\right)  ^{\frac{1}{r_{1}}}\left(  \sup_{w\in W_{2}%
}\sum_{j=1}^{m_{2}}R_{2}\left(  z_{2,j},w\right)  ^{r_{2}}\right)  ^{\frac
{1}{r_{2}}},
\end{align*}
for all $z_{1,i},z_{2,j}$ and all $m_{1},m_{2}\in\mathbb{N}$ implies%
\begin{align*}
&  \left(  \sup_{v\in V}\left(  \sum_{i=1}^{m_{1}}\left(  \sum_{j=1}^{m_{2}%
}S(z_{1,i},z_{2,j},v)^{p_{2}}\right)  ^{\frac{1}{p_{2}}\cdot p_{3}}\right)
^{\frac{1}{p_{3}}}\right)  \\
&  \leq C\left(  \sup_{w\in W_{1}}\sum_{i=1}^{m_{1}}R_{1}\left(
z_{1,i},w\right)  ^{r_{3}}\right)  ^{\frac{1}{r_{3}}}\left(  \sup_{w\in W_{2}%
}\sum_{j=1}^{m_{2}}R_{2}\left(  z_{2,j},w\right)  ^{r_{2}}\right)  ^{\frac
{1}{r_{2}}}%
\end{align*}
for all $z_{1,i},z_{2,j}$ and $m_{1},m_{2}\in\mathbb{N}$.
\end{theorem}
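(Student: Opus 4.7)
The plan is to mimic the second part of the proof of Theorem~\ref{RP}, upgrading the \emph{outer} index $i$ instead of the inner index $j$. Since the first slot of $S$ does not carry any homogeneity, the scaling trick used there with $\vartheta_{j}z_{2,j}$ will not apply directly; I would simulate it via the repetition--and--density argument employed in the first part of the proof of Theorem~\ref{RP}, which does not require homogeneity.

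First, I would reduce to a single-index estimate by absorbing the inner $R_2$--factor into the constant. Setting
\[
T_{i}(v):=\Bigl(\sum_{j=1}^{m_{2}}S(z_{1,i},z_{2,j},v)^{p_{2}}\Bigr)^{1/p_{2}},\qquad B:=\Bigl(\sup_{w\in W_{2}}\sum_{j=1}^{m_{2}}R_{2}(z_{2,j},w)^{r_{2}}\Bigr)^{1/r_{2}},
\]
the hypothesis becomes $\sup_{v}(\sum_{i}T_{i}(v)^{p_{1}})^{1/p_{1}}\leq CB\cdot\sup_{w}(\sum_{i}R_{1}(z_{1,i},w)^{r_{1}})^{1/r_{1}}$. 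Applying the repetition trick (repeat each $z_{1,i}$ a prescribed integer number of times, clean denominators, then use density to pass to the real line) yields, for every $\eta_{i}\geq 0$,
\[
\sup_{v}\Bigl(\sum_{i}\eta_{i}T_{i}(v)^{p_{1}}\Bigr)^{1/p_{1}}\leq CB\cdot\sup_{w}\Bigl(\sum_{i}\eta_{i}R_{1}(z_{1,i},w)^{r_{1}}\Bigr)^{1/r_{1}}.
\]

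Next, for each fixed $v\in V$, I would choose $\eta_{i}=T_{i}(v)^{\mu}$ with $\mu=p_{1}(r_{3}-r_{1})/r_{1}$ (the analogue of the $\vartheta_{j}=S_{2}^{q/p}$ choice from the second part of the proof of Theorem~\ref{RP}) and apply H\"{o}lder's inequality on the right-hand side with conjugate exponents $r_{3}/(r_{3}-r_{1})$ and $r_{3}/r_{1}$. This isolates the target factor $(\sum_{i}R_{1}^{r_{3}})^{1/r_{3}}$, and a judicious rearrangement of the $T$--powers, followed by cancellation, yields the intermediate inequality
\[
\Bigl(\sum_{i}T_{i}(v)^{q}\Bigr)^{1/q}\leq CB\cdot\sup_{w}\Bigl(\sum_{i}R_{1}(z_{1,i},w)^{r_{3}}\Bigr)^{1/r_{3}},\qquad \frac{1}{q}:=\frac{1}{p_{1}}-\frac{1}{r_{1}}+\frac{1}{r_{3}}.
\]

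Finally, the compatibility hypothesis $\tfrac{1}{r_{1}}-\tfrac{1}{p_{1}}\leq\tfrac{1}{r_{3}}-\tfrac{1}{p_{3}}$ rearranges precisely to $p_{3}\geq q$, so the classical monotonicity of the $\ell^{p}$--norm on finite sequences gives $(\sum_{i}T_{i}(v)^{p_{3}})^{1/p_{3}}\leq(\sum_{i}T_{i}(v)^{q})^{1/q}$. Taking the supremum over $v$, unpacking $T_{i}(v)$ and reinserting $B$ into the constant delivers the conclusion.

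The main obstacle lies in the H\"{o}lder step: the power $\mu$ and the H\"{o}lder exponents must be matched precisely so that the two occurrences of $\sum_{i}T_{i}^{\bullet}$ share a common exponent and can be cancelled, in parallel with the second-part argument of Theorem~\ref{RP}. The degenerate case $r_{3}=r_{1}$ sidesteps the H\"{o}lder step entirely and follows directly from the hypothesis via finite-sequence $\ell^{p}$--monotonicity (using only $p_{3}\geq p_{1}$).
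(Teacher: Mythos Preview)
There is a genuine gap in your repetition step. When you repeat each $z_{1,i}$ an integer number of times you do obtain
\[
\Bigl(\sum_i n_i\,T_i(v)^{p_1}\Bigr)^{1/p_1}\leq CB\Bigl(\sup_{w}\sum_i n_i\,R_1(z_{1,i},w)^{r_1}\Bigr)^{1/r_1},
\]
but the passage ``clean denominators, then use density'' breaks down as soon as $p_1\neq r_1$: writing $\eta_i=a_i/b$ and invoking the integer case produces an extra factor $b^{1/r_1-1/p_1}$ that does not cancel, so the weighted inequality with arbitrary real $\eta_i\ge 0$ is simply false in general. (In the first part of the proof of Theorem~\ref{RP} this step succeeds only because the two exponents coincide.) Even granting the weighted inequality, your H\"older computation does not land on the $\ell^{\tilde q}$ norm: with $\mu=p_1(r_3-r_1)/r_1$ and the conjugate pair $r_3/(r_3-r_1),\,r_3/r_1$, both $T$--sums carry the inside exponent $Q:=p_1r_3/r_1$, and after cancellation one is left with $(\sum_i T_i^{Q})^{1/\tilde q}$ where $1/\tilde q=1/p_1-1/r_1+1/r_3$. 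Since $Q\neq\tilde q$ unless $r_1=p_1$ (or $r_3=r_1$), this is not the $\ell^{\tilde q}$ norm and the final monotonicity step does not apply.

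What the paper actually intends by ``the second part'' is the homogeneity device, applied to the \emph{first} coordinate: replace $z_{1,i}$ by $\lambda_i z_{1,i}$, so that the weights enter as $\lambda_i^{p_1}$ on the left and $\lambda_i^{r_1}$ on the right. This asymmetry is exactly what makes the H\"older balancing produce a common inside exponent equal to $\tilde q$, after which the argument closes with $p_3\ge\tilde q$. Strictly speaking this requires $R_1$ and $S$ to be positively homogeneous in the first slot, an assumption not listed in the general setup but satisfied in every application in the paper (where $Z_1$ is a Banach space, $R_1(x,\varphi)=|\varphi(x)|$, and $S$ comes from a multilinear map). Your instinct that homogeneity is missing was right; the fix is to assume it, not to replace it by repetition.
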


For the applications we shall carry on in the next sections, $S$ will be
constant in $V$ and $W_{1},...,W_{k}$ will be compact sets.

\section{New inclusion theorems for multiple summing operators\label{gro}}

It is well known that every absolutely $p$-summing linear operator is
absolutely $q$-summing whenever $1\leq p\leq q$ (see \cite{diestel})$.$ More
generally, absolutely $\left(  p_{1};p_{2}\right)  $-summing operators are
absolutely $\left(  q_{1};q_{2}\right)  $-summing whenever $1\leq p_{j}\leq
q_{j},$ $j=1,2,$ and
\[
\frac{1}{p_{2}}-\frac{1}{p_{1}}\leq\frac{1}{q_{2}}-\frac{1}{q_{1}}.
\]
These kind of results are called inclusion results. For multilinear operators
inclusion results are more challenging. For instance, every multiple
$p$-summing multilinear operator is multiple $q$-summing whenever $1\leq p\leq
q\leq2$, but this is not valid beyond the threshold $2$ (see \cite{p11,pe33}%
)$.$ In this section, as a consequence of the regularity principle, we provide
new inclusion theorems for multiple summing operators.

Henceforth $E,E_{1},\cdots,E_{m},F$ denote Banach spaces over $\mathbb{K}$.
Following classical terminology, the Banach space of all bounded $m$-linear
operators from $E_{1}\times\cdots\times E_{m}$ to $F$ is denoted by
$\mathcal{L}(E_{1},\cdots,E_{m};F)$ and we endow it with the classical sup
norm. The topological dual of $E$ is denoted by $E^{\ast}$ and its closed unit
ball is denoted by $B_{E^{\ast}}.$ Throughout the paper, for $p\in\lbrack1,\infty]$,  $p^\ast$ denotes the conjugate of $p$, that is
$$
	\frac{1}{p} + \frac{1}{p^\ast} = 1.
$$
The convention $1^\ast = \infty$ and $\infty^\ast = 1$ will be adopted. Also, as usual, we consider the Banach spaces of weakly and strongly $p$-summable sequences:
\[
\ell_{p}^{w}(E):=\left\{  (x_{j})_{j=1}^{\infty}\subset E:\left\Vert
(x_{j})_{j=1}^{\infty}\right\Vert _{w,p}:=\sup_{\varphi\in B_{E^{\ast}}%
}\left(  \displaystyle\sum\limits_{j=1}^{\infty}\left\vert \varphi
(x_{j})\right\vert ^{p}\right)  ^{1/p}<\infty\right\}
\]
and
\[
\ell_{p}(E):=\left\{  (x_{j})_{j=1}^{\infty}\subset E:\left\Vert (x_{j}%
)_{j=1}^{\infty}\right\Vert _{p}:=\left(  \displaystyle\sum\limits_{j=1}%
^{\infty}\left\Vert x_{j}\right\Vert ^{p}\right)  ^{1/p}<\infty\right\}  .
\]

For $\mathbf{q}:=(q_{1},\dots,q_{m})\in\lbrack1,\infty)^{m}$, we define the
space of $m$-matrices $\ell_{\mathbf{q}}(E)$ as
\[
\ell_{\mathbf{q}}(E):=\ell_{q_{1}}\left(  \ell_{q_{2}}\left(  \dots\left(
\ell_{q_{m}}(E)\right)  \dots\right)  \right)  .
\]
That is, a vector matrix $\left(  x_{i_{1}\dots i_{m}}\right)  _{i_{1}%
,\dots,i_{m}=1}^{\infty}$ belongs to $\ell_{\mathbf{q}}(E)$ if, and only if,
\[
\left\Vert \left(  x_{i_{1}\dots i_{m}}\right)  _{i_{1},\dots,i_{m}=1}%
^{\infty}\right\Vert _{\ell_{\mathbf{q}}(E)}:=\left(  \sum_{i_{1}=1}^{\infty
}\left(  \dots\left(  \sum_{i_{m}=1}^{\infty}\left\Vert x_{i_{1}\dots i_{m}%
}\right\Vert _{E}^{q_{m}}\right)  ^{\frac{q_{m-1}}{q_{m}}}\dots\right)
^{\frac{q_{1}}{q_{2}}}\right)  ^{\frac{1}{q_{1}}}<\infty.
\]
When $E=\mathbb{K}$, we simply write $\ell_{\mathbf{q}}$. The following
definition will be useful for our purposes.

\begin{definition}
\label{guga030} Let $\mathbf{p}=\left(  p_{1},...,p_{m}\right)  ,\mathbf{q}%
=\left(  q_{1},...,q_{m}\right)  \in\lbrack1,\infty]^{m}$. A multilinear
operator $T\colon E_{1}\times\cdots\times E_{m}\rightarrow F$ is said to be
multiple $\left(  q_{1},...,q_{m};p_{1},...,p_{m}\right)  $-summing if there
exists a constant $C>0$ such that
\begin{equation}
\left(  \sum_{j_{1}=1}^{\infty}\left(  \cdots\left(  \sum_{j_{m}=1}^{\infty
}\left\Vert T\left(  x_{j_{1}}^{(1)},\dots,x_{j_{m}}^{(m)}\right)  \right\Vert
^{q_{m}}\right)  ^{\frac{q_{m-1}}{q_{m}}}\cdots\right)  ^{\frac{q_{1}}{q_{2}}%
}\right)  ^{\frac{1}{q_{1}}}\leq C\prod_{k=1}^{m}\left\Vert \left(  x_{j_{k}%
}^{(k)}\right)  _{j_{k}=1}^{\infty}\right\Vert _{w,p_{k}} \label{01}%
\end{equation}
for all $\left(  x_{j_{k}}^{(k)}\right)  _{j_{k}=1}^{\infty}\in\ell_{p_{k}%
}^{w}\left(  E_{k}\right)  $. We represent the class of all multiple $\left(
q_{1},\dots,q_{m};p_{1},\dots,p_{m}\right)  $--summing operators by
$\Pi_{\left(  q_{1},\dots,q_{m};p_{1},\dots,p_{m}\right)  }^{m}\left(
E_{1},\dots,E_{m};F\right)  $. When $q_{j}=\infty$, the respective sum is
replaced by the sup norm.
\end{definition}

The infimum of all $C>0$ for which (\ref{01}) holds defines a complete norm,
denoted hereafter by $\pi_{(q_{1},\dots,q_{m};p_{1},\dots,p_{m})}(\cdot)$. It
is not hard to verify that
\[
\Pi_{(q_{1},\dots,q_{m};p_{1},\dots,p_{m})}^{m}(E_{1},\dots,E_{m};F)
\]
is a subspace of $\mathcal{L}(E_{1},\dots,E_{m};F)$ and $\Vert\cdot\Vert
\leq\pi_{(q_{1},\dots,q_{m};p_{1},\dots,p_{m})}(\cdot).$ Also, if $q_{j}%
<p_{j}$ for some $j\in\{1,\dots,m\}$, then
\[
\Pi_{\left(  q_{1},\dots,q_{m};p_{1},\dots,p_{m}\right)  }^{m}(E_{1}%
,\dots,E_{m};F)=\{0\}.
\]

The following result associates multiple summing operators and
Hardy--Littlewood inequality.

\begin{theorem}
\label{yh}Let $\left(  p_{1},\dots,p_{m}\right)  \in\lbrack1,\infty]^{m}$. The
following statements are equivalent:

\begin{itemize}
\item[(1)] There is a constant $C>0$ such that for every $T\in\mathcal{L}%
(\ell_{p_{1}},...,\ell_{p_{m}};F)$ the following holds
\[
\left(  \sum_{j_{1}=1}^{\infty}\left(  \cdots\left(  \sum_{j_{m}=1}^{\infty
}\left\Vert T\left(  e_{j_{1}},\dots,e_{j_{m}}\right)  \right\Vert ^{q_{m}%
}\right)  ^{\frac{q_{m-1}}{q_{m}}}\cdots\right)  ^{\frac{q_{1}}{q_{2}}%
}\right)  ^{\frac{1}{q_{1}}}\leq C\Vert T\Vert.
\]

\item[(2)] For all Banach spaces $E_{1},...,E_{m},$ we have
\[
\mathcal{L}(E_{1},...,E_{m};F)=\Pi_{(q_{1},...,q_{m};p_{1}^{\ast}%
,...,p_{m}^{\ast})}^{mult}(E_{1},...,E_{m};F).
\]

\end{itemize}
\end{theorem}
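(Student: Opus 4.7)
The plan is to establish the equivalence via a direct linearization argument that bridges bounded operators on the sequence spaces $\ell_{p_k}$ with multiple summing operators through the duality $\ell_{p_k}^{\ast}=\ell_{p_k^{\ast}}$. Notably, this equivalence does not itself invoke the Regularity Principle; rather it is a structural bridge that will later allow one to translate Hardy--Littlewood-type bounds (established via the Regularity Principle in subsequent sections) into inclusion statements for multiple summing classes.

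For the direction $(2)\Rightarrow(1)$, specialize $E_k=\ell_{p_k}$. The coincidence $\mathcal{L}(\ell_{p_1},\ldots,\ell_{p_m};F)=\Pi^{m}_{(q_1,\ldots,q_m;p_1^{\ast},\ldots,p_m^{\ast})}$ as vector spaces, combined with the general inequality $\pi_{(q;p^{\ast})}(\cdot)\geq\|\cdot\|$, yields a closed graph for the identity between the two norm topologies on $\mathcal{L}$; the Closed Graph Theorem (applicable because both norms are complete) then produces a universal constant $C>0$ with $\pi_{(q;p^{\ast})}(T)\leq C\|T\|$. Evaluating the summing inequality at the canonical basis vectors $e_{j_k}\in\ell_{p_k}$ and using the standard fact $\|(e_{j_k})_{j_k}\|_{w,p_k^{\ast}}\leq 1$ (a one-line consequence of $(\ell_{p_k})^{\ast}=\ell_{p_k^{\ast}}$) gives $(1)$.

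For the direction $(1)\Rightarrow(2)$, let $T\in\mathcal{L}(E_1,\ldots,E_m;F)$ and fix finite sequences $(x_{j_k}^{(k)})_{j_k=1}^{m_k}\subset E_k$. Introduce the auxiliary $m$-linear operator $S\colon\ell_{p_1}^{m_1}\times\cdots\times\ell_{p_m}^{m_m}\to F$ defined by
\[
S\bigl(a^{(1)},\ldots,a^{(m)}\bigr)=T\!\left(\sum_{j_1=1}^{m_1}a_{j_1}^{(1)}x_{j_1}^{(1)},\,\ldots,\,\sum_{j_m=1}^{m_m}a_{j_m}^{(m)}x_{j_m}^{(m)}\right),
\]
so that evaluation at the canonical basis recovers $S(e_{j_1},\ldots,e_{j_m})=T(x_{j_1}^{(1)},\ldots,x_{j_m}^{(m)})$. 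The crucial norm computation
\[
\sup_{\|a\|_{p_k}\leq 1}\left\|\sum_{j_k}a_{j_k}x_{j_k}^{(k)}\right\|_{E_k}=\bigl\|(x_{j_k}^{(k)})_{j_k=1}^{m_k}\bigr\|_{w,p_k^{\ast}}
\]
is a direct duality identity, and yields $\|S\|\leq\|T\|\prod_{k=1}^{m}\|(x_{j_k}^{(k)})\|_{w,p_k^{\ast}}$. Applying $(1)$ to $S$ immediately delivers the multiple summing inequality for $T$ on the prescribed finite sequences, with the optimal universal constant; passing from finite to $\ell_{p_k^{\ast}}^{w}(E_k)$-sequences is then standard by monotone convergence in the summation index.

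The only genuinely delicate point is the Closed Graph step in $(2)\Rightarrow(1)$, which tacitly uses completeness of the ideal norm $\pi_{(q;p^{\ast})}$; and some care is warranted at the boundary $p_k=\infty$ or $q_k=\infty$, where the relevant sums collapse to sup-norms and the duality identity must be reinterpreted (typically replacing $\ell_{\infty}$ by $c_0$ so that the canonical basis still enjoys $\|(e_{j_k})\|_{w,1}\leq 1$). Aside from these bookkeeping issues, the argument is routine; the substantive content of the theorem is the observation that testing against arbitrary $T\colon\ell_{p_1}\times\cdots\times\ell_{p_m}\to F$ at canonical basis vectors already encodes the full coordinate-free multiple $(q;p^{\ast})$-summing property on every tuple of Banach spaces.
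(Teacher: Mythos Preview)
Your proposal is correct and follows precisely the approach the paper indicates: the paper does not give a full proof of this theorem but attributes it to P\'erez-Garc\'ia and Villanueva \cite[Corollary 20]{arc} and remarks that the argument \textquotedblleft rests on the isometric isomorphisms $\mathcal{L}(\ell_{p^{\ast}},E)\sim\ell_{p}^{w}(E)$ and $\mathcal{L}(c_{0},E)\sim\ell_{1}^{w}(E)$\textquotedblright. Your \textquotedblleft crucial norm computation\textquotedblright\ is exactly this identification, and your handling of the endpoint $p_{k}=\infty$ via $c_{0}$ matches the paper's hint; the Closed Graph step in $(2)\Rightarrow(1)$ is the standard way to extract a uniform constant from a set-theoretic coincidence of operator ideals.
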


Theorem \ref{yh}, as stated here, is essentially due to D. Pérez-García and I.
Villanueva, see \cite[Corollary 20]{arc}, and its proof rests on the isometric
isomorphisms $\mathcal{L}\left(  \ell_{p^{\ast}},E\right)  \sim\ell_{p}%
^{w}(E)$ and $\mathcal{L}\left(  c_{0},E\right)  \sim\ell_{1}^{w}(E).$ An
advantage of this result for our purposes in subsequent sections is that it
provides a useful way to link Hardy-Littlewood type inequalities to the
language of multiple summing operators; for results on multilinear summing
operators we refer to \cite{bombal,matos} and references therein.

The first application of Theorem \ref{RP3} is an inclusion result for multiple
summing operators which complements, to some extent, the one from \cite{p11}:

\begin{proposition}
\label{878}Let $m\geq2$ be a positive integer and
\[
2\leq r\leq u<\frac{mr}{m-1}.
\]
Then, for any collection of Banach spaces $E_{1},\cdots,E_{m},F$ there holds
\[
\Pi_{(r;r)}^{m}\left(  E_{1},\dots,E_{m};F\right)  \subset\Pi_{(\frac
{ru}{mr-\left(  m-1\right)  u};u)}^{m}\left(  E_{1},\dots,E_{m};F\right)
\]
and the inclusion has norm $1$.
\end{proposition}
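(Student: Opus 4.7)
The proof is a direct translation of Theorem \ref{RP3} into the language of multiple summing operators, so the strategy is essentially a dictionary check rather than new work. First, I would unpack what it means to have $T\in\Pi^{m}_{(r;r)}(E_{1},\dots,E_{m};F)$: for every finite set of sequences $(z_{t,j_{t}})\subset E_{t}$,
\[
\left(\sum_{j_{1}=1}^{n_{1}}\cdots\sum_{j_{m}=1}^{n_{m}}\|T(z_{1,j_{1}},\dots,z_{m,j_{m}})\|_{F}^{r}\right)^{1/r}\leq \pi_{(r;r)}(T)\prod_{t=1}^{m}\sup_{\varphi_{t}\in B_{E_{t}^{\ast}}}\left(\sum_{j_{t}=1}^{n_{t}}|\varphi_{t}(z_{t,j_{t}})|^{r}\right)^{1/r}.
\]

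Next I would set up the hypotheses of Theorem \ref{RP3} with $k=m$, taking $V$ to be a singleton (so $S$ carries no $v$-dependence), $Z_{t}=E_{t}$, $W_{t}=B_{E_{t}^{\ast}}$, and defining
\[
R_{t}(z,\varphi):=|\varphi(z)|,\qquad S(z_{1},\dots,z_{m}):=\|T(z_{1},\dots,z_{m})\|_{F}.
\]
The required homogeneity of $R_{t}$ in $z$ and of $S$ in each $z_{j}$ follows at once from linearity of $\varphi$ and $m$-linearity of $T$. With these identifications the displayed $(r;r)$-summing inequality is literally the hypothesis of Theorem \ref{RP3} with $p_{1}=r$ and constant $C=\pi_{(r;r)}(T)$.

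I would then choose $p_{2}=u$. The assumption $2\leq r\leq u<\frac{mr}{m-1}$ gives precisely $1\leq p_{1}\leq p_{2}<\frac{kp_{1}}{k-1}$ needed to invoke Theorem \ref{RP3}. Applying the theorem yields
\[
\left(\sum_{j_{1},\dots,j_{m}}\|T(z_{1,j_{1}},\dots,z_{m,j_{m}})\|_{F}^{\frac{ru}{mr-(m-1)u}}\right)^{\frac{mr-(m-1)u}{ru}}\leq \pi_{(r;r)}(T)\prod_{t=1}^{m}\|(z_{t,j_{t}})_{j_{t}}\|_{w,u},
\]
which is exactly the $\big(\frac{ru}{mr-(m-1)u};u\big)$-summing inequality for $T$ with constant $\pi_{(r;r)}(T)$. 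Taking infimum over admissible constants, $\pi_{(\frac{ru}{mr-(m-1)u};u)}(T)\leq\pi_{(r;r)}(T)$, and passing to arbitrary sequences by the standard truncation/density argument establishes both the claimed inclusion and the norm bound $\leq 1$.

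I do not expect a serious obstacle: once the correspondence between the abstract objects of Theorem \ref{RP3} and the summing-operator setting is pinned down, the arithmetic relations $\big(\frac{p_{1}p_{2}}{kp_{1}-(k-1)p_{2}}\big)\big|_{k=m,\,p_{1}=r,\,p_{2}=u}=\frac{ru}{mr-(m-1)u}$ and $\big(\frac{kp_{1}}{k-1}\big)\big|_{k=m,\,p_{1}=r}=\frac{mr}{m-1}$ make the bookkeeping transparent. The only mildly delicate point is to verify the homogeneity hypothesis of Theorem \ref{RP3} in our choice of $S$ and $R_{t}$ (so that the theorem can be legitimately invoked), which is immediate from linearity in the present setting.
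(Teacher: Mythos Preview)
Your proposal is correct and follows exactly the approach of the paper: the paper's proof also consists of instantiating Theorem \ref{RP3} with $Z_{t}=E_{t}$, $V$ a singleton, $W_{t}=B_{E_{t}^{\ast}}$, $R_{t}(z,\varphi)=|\varphi(z)|$, and $S(z_{1},\dots,z_{m},v)=\Vert T(z_{1},\dots,z_{m})\Vert$, and then reading off the conclusion. Your write-up actually spells out a few details (homogeneity of $S$ and $R_{t}$, the arithmetic matching of the exponents, and the deduction $\pi_{(\frac{ru}{mr-(m-1)u};u)}(T)\leq\pi_{(r;r)}(T)$) that the paper leaves implicit.
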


\begin{proof}
Using the abstract environment of Section 2, we just need to define
$Z_{j}=E_{j};$ let also $V=\{0\}$ and $T\in\Pi_{(r;r)}^{m}\left(
E_{1},\dots,E_{m};F\right)  $. Now define
\begin{align*}
W_{j} &  =  B_{E_{j}^{\ast}},\\
R_{j}(x,\varphi) &  =\left\vert \varphi(x)\right\vert ,\\
S(x_{1},...,x_{m},v) &  =\left\vert T(x_{1},...,x_{m})\right\vert
\end{align*}
and the proof is a consequence of the Regularity Principle for $k$-variables
(Theorem \ref{RP3}).
\end{proof}

If one carries out the same reasoning employed in the second part of the proof
of the Regularity Principle (Theorem \ref{RP}), the following more general
result can be established:

\begin{proposition}[(Inclusion Theorem)]\label{9990}Let $m$ be a positive integer and $1\leq s\leq
u<\frac{mrs}{mr-s}.$ Then, for any Banach spaces $E_{1},...,E_{m},F$ we have%
\[
\Pi_{(r;s)}^{m}\left(  E_{1},\dots,E_{m};F\right)  \subset\Pi_{(\frac
{rsu}{su+mrs-mru};u)}^{m}\left(  E_{1},\dots,E_{m};F\right)
\]
and the inclusion has norm $1$.
\end{proposition}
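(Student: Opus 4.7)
My plan is to iterate, one multi-linear slot at a time, the $\vartheta$-substitution used in the second half of the proof of Theorem~\ref{RP}. First I recast the statement abstractly, exactly as in the proof of Proposition~\ref{878}: put $Z_j = E_j$, $V = \{0\}$, $W_j = B_{E_j^\ast}$, $R_j(x,\varphi)=|\varphi(x)|$, and $S(x_1,\ldots,x_m) = \|T(x_1,\ldots,x_m)\|$. Multiple $(r;s)$-summability of $T$ then reads
\[
\Big(\sum_{i_1,\ldots,i_m} S(x_{i_1}^{(1)},\ldots,x_{i_m}^{(m)})^{r}\Big)^{1/r} \leq \pi_{(r;s)}(T)\prod_{k=1}^{m}\sup_{\varphi \in W_k}\Big(\sum_{i_k} R_k(x_{i_k}^{(k)},\varphi)^{s}\Big)^{1/s}.
\]

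Next I upgrade the $R_k$'s from exponent $s$ to exponent $u$ one slot at a time. At the $k$-th step I introduce a weight $\vartheta_{i_k}$ equal to a fractional power of the partial sum of $S$ over the remaining indices, replace $x_{i_k}^{(k)}$ by $\vartheta_{i_k} x_{i_k}^{(k)}$ (using the positive homogeneity of $R_k$ and of $S$ in the $k$-th argument), and apply H\"older's inequality with conjugate pair $(p/s,\,u/s)$, where $\tfrac{1}{p}=\tfrac{1}{s}-\tfrac{1}{u}$. Calibrating the exponent of $\vartheta_{i_k}$ so that both sides become powers of the same partial sum---exactly as in Theorem~\ref{RP}---the self-referential term is absorbed, the exponent on $R_k$ improves from $s$ to $u$, and the sum exponent changes from $q_{k-1}$ to $q_k$ with
\[
\frac{1}{q_k} = \frac{1}{q_{k-1}} - \Big(\frac{1}{s} - \frac{1}{u}\Big).
\]
Before the next iteration I collapse the momentarily mixed-norm left-hand side to an isotropic $\ell_{q_k}$-sum via the nesting $\|\cdot\|_{\ell_{q_k}} \leq \|\cdot\|_{\ell_{q_{k-1}}}$, available because $s \leq u$ forces $q_{k-1} \leq q_k$.

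Starting from $q_0 = r$ and iterating $m$ times gives $\tfrac{1}{q_m} = \tfrac{1}{r} - m\big(\tfrac{1}{s}-\tfrac{1}{u}\big) = \tfrac{su+mrs-mru}{rsu}$, matching the target $q_m = \tfrac{rsu}{su+mrs-mru}$. The standing hypothesis $u < \tfrac{mrs}{mr-s}$ is precisely the condition $1/q_m > 0$, so the exponent stays positive throughout, and $1\leq s \leq u$ keeps the H\"older conjugates $p/s$ and $u/s$ admissible at every step. Since the $\vartheta$-substitution introduces no new multiplicative constants, the final inequality still carries the constant $\pi_{(r;s)}(T)$, showing that the inclusion has norm $1$. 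The only genuine difficulty is combinatorial bookkeeping: after each upgrade the natural left-hand side is a nested mixed norm whose outermost index is the one just processed, and it must be renormalized as an isotropic $\ell_{q_k}$-sum over all remaining indices before the next slot is addressed. Apart from this, the analytic content of each iteration is identical to the single $\vartheta$-step in Theorem~\ref{RP}.
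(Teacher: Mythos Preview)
Your proposal is correct and follows precisely the route the paper indicates: the paper does not spell out a proof of Proposition~\ref{9990} but merely points to ``the same reasoning employed in the second part of the proof of the Regularity Principle (Theorem~\ref{RP})'', and that is exactly the $\vartheta$-substitution you iterate slot by slot. Your exponent arithmetic $\tfrac{1}{q_k}=\tfrac{1}{q_{k-1}}-(\tfrac{1}{s}-\tfrac{1}{u})$, the collapse $\ell_{q_k}\le\ell_{q_{k-1}}$ between iterations, and the verification that the hypothesis $u<\tfrac{mrs}{mr-s}$ keeps every $q_k$ finite are all in order, and no extraneous constant is introduced, so the norm-$1$ claim is justified.
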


Proposition \ref{9990} itself has an interesting application. It provides a
simplified proof of a key technical tool from \cite{teixeira}, that is: for
any positive integer $m$, and any $p>2m,$ there holds%
\begin{equation}
\left(  \sum\limits_{i_{1},\ldots,i_{m}=1}^{\infty}\left\vert U(e_{i_{^{1}}%
},\ldots,e_{i_{m}})\right\vert ^{\frac{2p}{p-2m}}\right)  ^{\frac{p-2m}{2p}%
}\leq\left\Vert U\right\Vert , \label{654}%
\end{equation}
for all $m$--linear forms $U\colon\ell_{p}^{n}\times\cdots\times\ell_{p}%
^{n}\rightarrow\mathbb{K}$ and all positive integers $n$.

Indeed, as every continuous $m$-linear form $T$ is multiple $\left(
2;1\right)  $-summing with constant $1$, one simply takes $\left(
r,s,u\right)  =\left(  2,1,p^{\ast}\right)  $ in the statement of Proposition
\ref{9990} and arrives at (\ref{654}).

As a matter of fact, every continuous $m$-linear form $T$ is actually
multiple
\[
\left(  2,...,2;1,...,1,2\right)  \text{--summing}%
\]
with constant $1$. If one uses this stronger information, one can actually
improves (\ref{654}) as it yields the $\ell_{\frac{2p}{p-2m+2}}$-norm of
$|U(e_{i_{^{1}}},\ldots,e_{i_{m}})|$ is controlled by $\Vert U\Vert$.

\section{Grothendieck-type theorems}

Every continuous linear operator from $\ell_{1}$ into $\ell_{2}$ is absolutely
$\left(  q,p\right)  $--summing for every $q\geq p\geq1;$ this result is a
trademark theorem proven by Grothendieck in his seminal 1950's
\textquotedblleft R\'esum\'e", \cite{GG} --- for recent monographs on
Grothendieck's R\'esum\'e we refer to \cite{RR,PISIER}. More precisely, the result
asserts that
\[
\left(
{\textstyle\sum\limits_{j=1}^{m}}
\left\Vert u(x_{j})\right\Vert ^{q}\right)  ^{\frac{1}{q}}\leq C\sup
_{\varphi\in B_{\ell_{\infty}}}\left(
{\textstyle\sum\limits_{j=1}^{m}}
\left\vert \varphi(x_{j})\right\vert ^{p}\right)  ^{\frac{1}{p}}%
\]
for all continuous linear operators $u\colon\ell_{1}\rightarrow\ell_{2}.$ This
result is in fact very special as illustrated by the following result due to
Lindenstrauss and Pelczynski (\cite{lindenstrauss}): if $E,F$ are infinite
dimensional Banach spaces and $E$ has unconditional Schauder basis, and every
continuous linear operator from $E$ to $F$ is absolutely $1$-summing, then
$E=\ell_{1}$ and $F$ is a Hilbert space. In the multilinear setting, every
continuous $m$-linear operator from $\ell_{1}\times\cdots\times\ell_{1}$ into
$\ell_{2}$ is multiple $\left(  q,p\right)  $--summing for every $1\leq
p\leq2$ and every $q\geq p$ (\cite[Theorems 5.1 and 5.2]{bombal} and
\cite{p11})$,$ and, when $m\geq2$, this result is no longer valid if $p>2$
(\cite[Theorem 3.6]{pe33}). The results of the previous section provide estimates for values
of $q$ for which every continuous $m$-linear operator $T\colon\ell_{1}%
\times\cdots\times\ell_{1}\rightarrow\ell_{2}$ is multiple $\left(
q,p\right)  $-summing when $p=2+\epsilon$, for $\epsilon$ small. However,
since $\left(  \ell_{1},\ell_{2}\right)  $ is a quite special pair of Banach
spaces for summability purposes, we are able to provide a definitive result
with all pairs of $\left(  q,p\right)  $ for which $\Pi_{(q;p)}^{m}\left(
^{m}\ell_{1};\ell_{2}\right)  =\mathcal{L}\left(  ^{m}\ell_{1};\ell
_{2}\right)  .$

\begin{theorem}
Let $m\ge 2$ be a positive integer and $1\leq p\leq q<\infty.$ Then%
\[
\Pi_{(q;p)}^{m}\left(  ^{m}\ell_{1};\ell_{2}\right)  =\mathcal{L}\left(
^{m}\ell_{1};\ell_{2}\right)
\]
if and only if $p\leq2$ or $q>p>2.$
\end{theorem}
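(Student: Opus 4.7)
I would prove the biconditional by separate arguments for the two directions.

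For \emph{necessity}, assume $p>2$; I must exclude the case $q=p$, i.e.\ show $\Pi^m_{(p;p)}({}^m\ell_1;\ell_2)\subsetneq\mathcal{L}({}^m\ell_1;\ell_2)$. The counterexample constructed in \cite[Theorem~3.6]{pe33}---already cited in the excerpt---delivers exactly this, for every $m\ge 2$ and every $p>2$.

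For \emph{sufficiency}, I split on whether $p\le 2$. When $p\le 2$, the multilinear Grothendieck theorem of Bombal, P\'erez Garc\'ia and Villanueva \cite[Thms.~5.1, 5.2]{bombal} (see also \cite{p11}) gives $\Pi^m_{(p;p)}({}^m\ell_1;\ell_2)=\mathcal{L}$ for all $1\le p\le 2$, and the contractive inclusion $\ell_p\hookrightarrow\ell_q$ extends this to any $q\ge p$.

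When $q>p>2$, I start from the $p=2$ instance above, $\Pi^m_{(2;2)}({}^m\ell_1;\ell_2)=\mathcal{L}$, and bootstrap upward through the Inclusion Theorem. Taking $r=s=2$ in Proposition~\ref{9990}, for every $u\in[2,\tfrac{2m}{m-1})$ one obtains
\[
\Pi^m_{(\tfrac{2u}{2m-(m-1)u};\,u)}({}^m\ell_1;\ell_2)=\mathcal{L}.
\]
Combined with the trivial upward inclusion in the outer exponent and the downward inclusion in the weak exponent ($u\ge p$ implies $\Pi^m_{(\cdot\,;u)}\subseteq\Pi^m_{(\cdot\,;p)}$), this immediately reaches every pair $(q,p)$ with $p\in(2,\tfrac{2m}{m-1})$ and $q\ge\tfrac{2p}{2m-(m-1)p}$. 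The residual pairs---those with $p\ge\tfrac{2m}{m-1}$, or with $p<q<\tfrac{2p}{2m-(m-1)p}$---I would cover by chaining applications of the anisotropic Theorem~\ref{Ranisot} (and its natural $k$-variable analogue), lifting one weak coordinate at a time and using the elementary inequality $\sum_j a_j^q\le(\sum_j a_j^2)^{q/2}$ (valid for $q\ge 2$) to convert mixed iterated-norm statements back into single-index summing inequalities.

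\textbf{Main obstacle.} The essential difficulty is crossing the barrier $p=\tfrac{2m}{m-1}$. A naive iteration of Proposition~\ref{9990} does not widen its accessible range, since the map $u\mapsto\tfrac{2u}{2m-(m-1)u}$ reproduces the same upper threshold $\tfrac{2m}{m-1}$ at each stage. A plain iteration of Theorem~\ref{Ranisot} is likewise obstructed by its budget constraint $\tfrac{1}{r_1}-\tfrac{1}{p_1}\le\tfrac{1}{r_3}-\tfrac{1}{p_3}$, which prevents raising a second weak exponent above the first once the first has been pushed past $2$. The heart of the proof must therefore lie in orchestrating the anisotropic regularity principle in a more delicate way---presumably exploiting the Grothendieck-type rigidity peculiar to the pair $(\ell_1,\ell_2)$ emphasized in the introduction---so as to funnel every admissible $(q,p)$ with $q>p>2$ back to the base case $(2;2)$.
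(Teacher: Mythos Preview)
Your handling of necessity and of the case $p\le 2$ matches the paper. The genuine gap is exactly the one you flag yourself: for $q>p>2$ your bootstrap via Proposition~\ref{9990} and Theorem~\ref{Ranisot} stalls at the threshold $p=\tfrac{2m}{m-1}$, and you offer no concrete mechanism to cross it. Hoping that ``a more delicate orchestration'' of the anisotropic regularity principle will close the gap is not a proof, and in fact the paper does \emph{not} use the regularity principle at all for this case.

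The paper's argument for $q>p>2$ is structurally different and bypasses the barrier entirely. It proceeds by linearization through the projective tensor product. First, from \cite[Proposition~3.6]{bot} one has $\Pi^m_{(q;p)}({}^m\ell_1;\mathbb{K})=\mathcal{L}({}^m\ell_1;\mathbb{K})$ for every $q>p>2$; this scalar-valued coincidence implies that the universal map $\Psi\colon \ell_1\times\cdots\times\ell_1\to \ell_1\widehat{\otimes}_\pi\cdots\widehat{\otimes}_\pi\ell_1$ carries weakly $p$-summable sequences to weakly $q$-summable tensor sequences. Now linearize a given $T\colon {}^m\ell_1\to\ell_2$ to $\widetilde{T}\colon \ell_1\widehat{\otimes}_\pi\cdots\widehat{\otimes}_\pi\ell_1\to\ell_2$. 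The crucial structural fact is the isometric isomorphism $\ell_1\widehat{\otimes}_\pi\cdots\widehat{\otimes}_\pi\ell_1\cong\ell_1$, so $\widetilde{T}$ is a bounded linear operator from $\ell_1$ to $\ell_2$ and hence, by the classical (linear) Grothendieck theorem, absolutely $q$-summing. Composing these two facts gives $T\in\Pi^m_{(q;p)}({}^m\ell_1;\ell_2)$ directly, with no restriction on how large $p$ may be. This is precisely the ``rigidity peculiar to $(\ell_1,\ell_2)$'' you allude to, but implemented through tensor linearization rather than through summing-operator inclusions.
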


\begin{proof} If $p\leq2,$ by \cite{p11} we know that every continuous $m$-linear
operator is multiple $\left(  q;p\right)  $-summing for all $q\geq p.$ If
$p>2$, by \cite{pe33} we know that $\Pi_{(p;p)}^{m}\left(  ^{m}\ell_{1}%
;\ell_{2}\right)  \neq\mathcal{L}\left(  ^{m}\ell_{1};\ell_{2}\right)  .$ It
remains to prove that $\Pi_{(q;p)}^{m}\left(  ^{m}\ell_{1};\ell_{2}\right)
=\mathcal{L}\left(  ^{m}\ell_{1};\ell_{2}\right)  $ for all $q>p>2.$ So, let
us consider $q>p>2.$ By \cite[Proposition 3.6]{bot} we know that
\begin{equation}
\Pi_{(q;p)}^{m}\left(  ^{m}\ell_{1};\mathbb{K}\right)  =\mathcal{L}\left(
^{m}\ell_{1};\mathbb{K}\right)  . \label{88k}%
\end{equation}
It is not difficult to prove that from (\ref{88k}) we conclude that every
continuous $m$-linear operator $T:\ell_{1}\times\cdots\times\ell
_{1}\rightarrow F$ sends weakly $p$-summable sequences into weakly
$q$-summable sequences, regardless of the Banach space $F$. More precisely,%
\[
\left(  T\left(  x_{j_{1}}^{(1)},...,x_{j_{m}}^{(m)}\right)  \right)
_{j_{1},...,j_{m}=1}^{\infty}\in\ell_{q}^{w}(F)
\]
whenever%
\[
\left(  x_{j_{k}}^{(k)}\right)  _{j_{k}=1}^{\infty}\in\ell_{p}^{w}%
(E_{k}),\text{ }k=1,...,m.
\]
Now, considering $\Psi:\ell_{1}\times\cdots\times\ell_{1}\rightarrow\ell
_{1}\widehat{\otimes}_{\pi}\cdots\widehat{\otimes}_{\pi}\ell_{1}$ given by%
\[
\Psi\left(  x^{(1)},...,x^{(m)}\right)  =x^{(1)}\widehat{\otimes}_{\pi}%
\cdots\widehat{\otimes}_{\pi}x^{(m)},
\]
we conclude that
\[
\left(  x_{j_{1}}^{(1)}\widehat{\otimes}_{\pi}\cdots\widehat{\otimes}_{\pi
}x_{j_{m}}^{(m)}\right)  _{j_{1},...,j_{m}=1}^{\infty}\in\ell_{q}^{w}(\ell
_{1}\widehat{\otimes}_{\pi}\cdots\widehat{\otimes}_{\pi}\ell_{1})
\]
whenever
\[
\left(  x_{j_{k}}^{(k)}\right)  _{j_{k}=1}^{\infty}\in\ell_{p}^{w}(\ell
_{1}),\text{ }k=1,...,m.
\]
Let $\widetilde{T}:\ell_{1}\widehat{\otimes}_{\pi}\cdots\widehat{\otimes}%
_{\pi}\ell_{1}\rightarrow\ell_{2}$ be the linearization of $T.$ Since
$\ell_{1}\widehat{\otimes}_{\pi}\cdots\widehat{\otimes}_{\pi}\ell_{1}$ is
isometrically isomorphic to $\ell_{1}$, then $\widetilde{T}$ is absolutely
$q$-summing and, for $\left(  x_{j_{k}}^{(k)}\right)  _{j_{k}=1}^{\infty}%
\in\ell_{p}^{w}(\ell_{1}),$ $k=1,...,m,$ we have%
\begin{align*}
\left(
{\textstyle\sum\limits_{j_{1},...,j_{m}=1}^{\infty}}
\left\Vert T\left(  x_{j_{1}}^{(1)},...,x_{j_{m}}^{(m)}\right)  \right\Vert
^{q}\right)  ^{\frac{1}{q}}  &  =\left(
{\textstyle\sum\limits_{j_{1},...,j_{m}=1}^{\infty}}
\left\Vert \widetilde{T}\left(  x_{j_{1}}^{(1)}\widehat{\otimes}_{\pi}%
\cdots\widehat{\otimes}_{\pi}x_{j_{m}}^{(m)}\right)  \right\Vert ^{q}\right)
^{\frac{1}{q}}\\
&  \leq C\left\Vert \left(  x_{j_{1}}^{(1)}\widehat{\otimes}_{\pi}%
\cdots\widehat{\otimes}_{\pi}x_{j_{m}}^{(m)}\right)  _{j_{1},...,j_{m}%
=1}^{\infty}\right\Vert _{w,q}<\infty
\end{align*}
and the proof is done.
\end{proof}

As a matter of fact a similar result holds in a more general setting:

\begin{theorem}
Let $m\geq2$ be an integer and $F$ be a Banach space. If $\Pi_{(q;p)}\left(
\ell_{1};F\right)  =\mathcal{L}\left(  \ell_{1};F\right)  $, then%
\[
\Pi_{(q+\delta;p)}^{m}\left(  ^{m}\ell_{1};F\right)  =\mathcal{L}\left(
^{m}\ell_{1};F\right)
\]
for all $\delta>0.$
\end{theorem}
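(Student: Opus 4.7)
The plan is to adapt the proof of the preceding theorem by replacing the absolute $q$-summability of the linearization (which was automatic from Grothendieck's theorem in the special case $F=\ell_2$) with a combination of the present hypothesis, the standard linear inclusion theorem for summing operators, and a scalar multilinear estimate obtained via the Regularity Principle.

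First I would linearize: by the isometric isomorphism $\ell_{1}\widehat{\otimes}_{\pi}\cdots\widehat{\otimes}_{\pi}\ell_{1}\cong\ell_{1}$, any $T\in\mathcal{L}(^{m}\ell_{1};F)$ corresponds to a unique $\widetilde T\colon\ell_{1}\to F$ with $\|\widetilde T\|=\|T\|$. The hypothesis together with the closed graph theorem gives $\widetilde T\in\Pi_{(q;p)}(\ell_{1};F)$ with $\pi_{(q;p)}(\widetilde T)\leq K\|T\|$ for some universal $K$. By the standard linear inclusion theorem for absolutely summing operators, given any $\delta>0$ one can choose $\epsilon=\epsilon(\delta)>0$ small enough that $\widetilde T\in\Pi_{(q+\delta;p+\epsilon)}(\ell_{1};F)$ with constant controlled by $K\|T\|$.

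The core step is then to show that whenever $(x^{(k)}_{j_{k}})_{j_{k}}\in\ell_{p}^{w}(\ell_{1})$ for $k=1,\dots,m$, the multi-indexed tensor sequence $u_{j_{1},\dots,j_{m}}:=x^{(1)}_{j_{1}}\widehat{\otimes}_{\pi}\cdots\widehat{\otimes}_{\pi}x^{(m)}_{j_{m}}$ is weakly $(p+\epsilon)$-summable in $\ell_{1}\widehat{\otimes}_{\pi}\cdots\widehat{\otimes}_{\pi}\ell_{1}\cong\ell_{1}$, with the quantitative control
\[
\|(u_{j_{1},\dots,j_{m}})\|_{w,p+\epsilon}\leq C\prod_{k=1}^{m}\|(x^{(k)}_{j_{k}})\|_{w,p}.
\]
Unwinding the duality $(\ell_{1}\widehat{\otimes}_{\pi}\cdots\widehat{\otimes}_{\pi}\ell_{1})^{*}=\mathcal{L}(^{m}\ell_{1};\mathbb{K})$, this reduces to a scalar multilinear Hardy--Littlewood type estimate: every $m$-linear form $\Phi\in\mathcal{L}(^{m}\ell_{1};\mathbb{K})$ is multiple $(p+\epsilon;p)$-summing with $\pi^{m}_{(p+\epsilon;p)}(\Phi)\leq C\|\Phi\|$. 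Such a uniform estimate can be produced by starting from a coarser scalar multilinear summing inequality of Botelho type and then applying the Regularity Principle (Theorem~\ref{RP3}), which trades a small loss in the output exponent for the precise matching between input and output needed here.

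Putting the pieces together, I would conclude by writing, for arbitrary $(x^{(k)}_{j_{k}})\in\ell_{p}^{w}(\ell_{1})$,
\begin{align*}
\left(\sum_{j_{1},\dots,j_{m}}\left\|T(x^{(1)}_{j_{1}},\dots,x^{(m)}_{j_{m}})\right\|^{q+\delta}\right)^{\frac{1}{q+\delta}}
&=\left(\sum_{j_{1},\dots,j_{m}}\left\|\widetilde T(u_{j_{1},\dots,j_{m}})\right\|^{q+\delta}\right)^{\frac{1}{q+\delta}}\\
&\leq \pi_{(q+\delta;p+\epsilon)}(\widetilde T)\cdot\|(u_{j_{1},\dots,j_{m}})\|_{w,p+\epsilon}\\
&\leq C'\|T\|\prod_{k=1}^{m}\|(x^{(k)}_{j_{k}})\|_{w,p},
\end{align*}
which is precisely the multiple $(q+\delta;p)$-summability of $T$.

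The main obstacle will be the scalar multilinear step: establishing the $(p+\epsilon;p)$-summability of every $\Phi\in\mathcal{L}(^{m}\ell_{1};\mathbb{K})$ with a uniform norm bound. This is where the Regularity Principle is decisive, allowing one to upgrade any available coarse scalar multilinear inequality to one with matching $(p+\epsilon;p)$ exponents, at the cost of an arbitrarily small $\epsilon>0$; the flexibility in the $\delta$ in the conclusion is exactly what is needed to absorb this $\epsilon$ through the linear inclusion step.
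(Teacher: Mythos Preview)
Your proposal is correct and follows essentially the same route as the paper's proof: linearize via $\ell_{1}\widehat{\otimes}_{\pi}\cdots\widehat{\otimes}_{\pi}\ell_{1}\cong\ell_{1}$, apply the hypothesis together with the linear inclusion theorem to the linearization $\widetilde{T}$, and control the weak $(p+\varepsilon)$-summability of the tensor sequence via the scalar coincidence $\Pi^{m}_{(p+\varepsilon;p)}(^{m}\ell_{1};\mathbb{K})=\mathcal{L}(^{m}\ell_{1};\mathbb{K})$. The only difference is that the paper obtains this scalar fact by citing \cite[Proposition~3.6]{bot} directly rather than via the Regularity Principle; your invocation of Theorem~\ref{RP3} for this step is unnecessary, since the Botelho--Pellegrino result already delivers the required $(p+\varepsilon;p)$ estimate outright.
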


\begin{proof} By \cite[Proposition 3.6]{bot} we know that%
\[
\Pi_{(p+\varepsilon;p)}^{m}\left(  ^{m}\ell_{1};\mathbb{K}\right)
=\mathcal{L}\left(  ^{m}\ell_{1};\mathbb{K}\right)
\]
for all $\varepsilon>0.$ As in the previous proof we know that
\[
\left(  x_{j_{1}}^{(1)}\widehat{\otimes}_{\pi}\cdots\widehat{\otimes}_{\pi
}x_{j_{m}}^{(m)}\right)  _{j_{1},...,j_{m}=1}^{\infty}\in\ell_{p+\varepsilon
}^{w}(\ell_{1}\widehat{\otimes}_{\pi}\cdots\widehat{\otimes}_{\pi}\ell_{1})
\]
whenever
\[
\left(  x_{j_{k}}^{(k)}\right)  _{j_{k}=1}^{\infty}\in\ell_{p}^{w}(\ell
_{1}),\text{ }k=1,...,m.
\]
Let $\widetilde{T}:\ell_{1}\widehat{\otimes}_{\pi}\cdots\widehat{\otimes}%
_{\pi}\ell_{1}\rightarrow F$ be the linearization of $T.$ Since $\ell
_{1}\widehat{\otimes}_{\pi}\cdots\widehat{\otimes}_{\pi}\ell_{1}$ is
isometrically isomorphic to $\ell_{1}$, then $\widetilde{T}$ is absolutely
$\left(  q;p\right)  $-summing and hence for any $\delta>0$ there is a
$\varepsilon>0$ such that $\widetilde{T}$ is $\left(  q+\delta;p+\varepsilon
\right)  $-summing. Therefore, for $\left(  x_{j_{k}}^{(k)}\right)  _{j_{k}%
=1}^{\infty}\in\ell_{p}^{w}(\ell_{1}),$ $k=1,...,m,$ we have%
\begin{align*}
\left(
{\textstyle\sum\limits_{j_{1},...,j_{m}=1}^{\infty}}
\left\Vert T\left(  x_{j_{1}}^{(1)},...,x_{j_{m}}^{(m)}\right)  \right\Vert
^{q+\delta}\right)  ^{\frac{1}{q+\delta}}  &  =\left(
{\textstyle\sum\limits_{j_{1},...,j_{m}=1}^{\infty}}
\left\Vert \widetilde{T}\left(  x_{j_{1}}^{(1)}\widehat{\otimes}_{\pi}%
\cdots\widehat{\otimes}_{\pi}x_{j_{m}}^{(m)}\right)  \right\Vert ^{q+\delta
}\right)  ^{\frac{1}{q+\delta}}\\
&  \leq C\left\Vert \left(  x_{j_{1}}^{(1)}\widehat{\otimes}_{\pi}%
\cdots\widehat{\otimes}_{\pi}x_{j_{m}}^{(m)}\right)  _{j_{1},...,j_{m}%
=1}^{\infty}\right\Vert _{w,p+\varepsilon}<\infty
\end{align*}
and the proof is complete.
\end{proof}

\section{Sharp anisotropic Hardy--Littlewood inequality \label{hl}}

The investigation of bilinear forms acting on sequence spaces goes back to the
pioneering work of Hilbert on his famous double-series theorem and, throughout
the 20th century, has attracted the attention of leading mathematicians as
Weyl, Toeplitz, Schur, Nehari, see \cite{nehari}, \cite{schur}, \cite{toe} and
references therein. A trademark of the field comes from Littlewood's solution,
\cite{Litt}, to the problem posed by P.J. Daniell; the famous Littlewood's
$4/3$ inequality is an estimate that represents the extremal case $p=q=\infty$
in \eqref{inicio}. Bohnenblust and Hille, \cite{bh}, obtained important
generalizations of Littlewood's $4/3$ inequality to the setting of $m$--linear
operators and few years later, Hardy and Littlewood proved a series of
inequalities for bilinear forms acting on $\ell_{p}\times\ell_{q}$ spaces,
with $\frac{1}{p}+\frac{1}{q}<1$, which would launch a new and promising line
of investigation; named thereafter Hardy-Littlewood type inequalities. The key
objective of study is to control
\begin{equation}
\left(  \sum_{i=1}^{n}\left(  \sum_{j=1}^{n}\left\vert T(e_{i},e_{j}%
)\right\vert ^{a}\right)  ^{\frac{1}{a}\cdot b}\right)  ^{\frac{1}{b}}
\label{quant}%
\end{equation}
for all norm-1 bilinear operators $T\colon\ell_{p}^{n}\times\ell_{q}%
^{n}\rightarrow\mathbb{K}$ and all positive integers $n$.

The search of optimal ranges of exponents for universal summability of
\eqref{quant} has been carried out by direct and indirect approaches
permeating the theory and some sectional answers have been collected
throughout the decades. Partial solutions can be found, for instance, in
\cite{alb} and \cite{tonge}. While these represented important advances in the
global understanding of the problem, the results proven thus far are limited
when {it} comes to determining the whole spectrum of admissible exponents.
This is, indeed, a subtle issue which resembles the problem of Schur
multipliers investigated by Bennett in \cite{bennett}.

Before we state the main Theorem of this section, we highlight that this is
more than just a beautiful mathematical puzzle. Even when restricted to
\textit{classical} isotropic multiple summing, enlarging the studies to the
anisotropic setting reveals a number of important nuances that could not be
perceived otherwise. This is a somewhat common procedure in the realm of
mathematics --- solving real problems through complex methods being probably
the most emblematic example.

As an application of the regularity principle, we will prove the following
complete characterization of all admissible anisotropic exponents for the
Hardy-Littlewood inequality:

\begin{theorem}
\label{main} Let $p,q \in\lbrack2,\infty]$ with $\frac{1}{p}+\frac{1}{q}<1$,
and $a,b>0.$ The following assertions are equivalent:

\begin{itemize}
\item[(a)] There is a constant $C_{p,q,a,b}\geq1$ such that
\begin{equation}
\left(  \sum_{i=1}^{n}\left(  \sum_{j=1}^{n}\left\vert U(e_{i},e_{j}%
)\right\vert ^{a}\right)  ^{\frac{1}{a} \cdot b}\right)  ^{\frac{1}{b}}\leq
C_{p,q,a,b}\left\Vert U\right\Vert , \label{811}%
\end{equation}
for all bilinear operators $U \colon\ell_{p}^{n}\times\ell_{q}^{n}%
\rightarrow\mathbb{K}$ and all positive integers $n$.

\smallskip

\item[(b)] The exponents $a,b$ satisfy $\left(  a,b\right)  \in\lbrack\frac
{q}{q-1},\infty)\times\lbrack\frac{pq}{pq-p-q},\infty)$ and
\begin{equation}
\frac{1}{a}+\frac{1}{b}\leq\frac{3}{2}-\left(  \frac{1}{p}+\frac{1}{q}\right)
. \label{obey}%
\end{equation}

\end{itemize}
\end{theorem}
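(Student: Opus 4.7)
The plan is to prove the equivalence (a)$\Leftrightarrow$(b) in two stages: necessity through explicit constructions, and sufficiency via the anisotropic regularity principle (Theorem~\ref{Ranisot}).

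For the direction (a)$\Rightarrow$(b), I would verify each of the three constraints in (b) by exhibiting bilinear forms that saturate it. The rectangle bounds $a\geq q/(q-1)$ and $b\geq pq/(pq-p-q)$ come from tests concentrated on a single row or column; for instance, choosing $U(x,y) = x_1 \cdot \sum_{j=1}^n \alpha_j y_j$ with $(\alpha_j) \in B_{\ell_{q^*}^n}$ reduces \eqref{811} to a one-dimensional $\ell_p$/$\ell_q$ inclusion that forces $a \geq q^*$, and a dual construction yields the lower bound for $b$. The diagonal constraint $\frac{1}{a}+\frac{1}{b}\leq\frac{3}{2}-\frac{1}{p}-\frac{1}{q}$ follows from a Kahane--Salem--Zygmund probabilistic argument: a random sign bilinear form on $\ell_p^n\times\ell_q^n$ has norm bounded by a constant multiple of $n^{1/2+1/p^*+1/q^*}$, while the left-hand side of \eqref{811} for such a form is comparable to $n^{1/a+1/b}$; letting $n\to\infty$ forces the claimed inequality.

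For the harder direction (b)$\Rightarrow$(a), I would invoke Theorem~\ref{Ranisot} in the set-up $V=\{0\}$, $W_1=B_{\ell_{p}^{n\ast}}$, $W_2=B_{\ell_{q}^{n\ast}}$, $S(x,y,0)=|U(x,y)|$, and $R_t(z,\varphi)=|\varphi(z)|$. Combined with Theorem~\ref{yh}, an estimate of the form \eqref{811} is equivalent to a multiple summability statement for arbitrary bilinear operators on $\ell_p\times\ell_q$. The strategy is then: (i) identify a small collection of anchor cases drawn from the existing literature---for instance the sharp isotropic Hardy--Littlewood exponent $a=b=\frac{2pq}{pq-p-q}$, and extremal corner cases where one of $a,b$ equals the rectangle lower bound or $\infty$---for which \eqref{811} is already known; (ii) apply Theorem~\ref{Ranisot} to each anchor, exploiting the structural flexibility $\frac{1}{r_1}-\frac{1}{p_1} \leq \frac{1}{r_3}-\frac{1}{p_3}$, to obtain \eqref{811} for every $(a,b)$ lying on the critical hyperplane $\frac{1}{a}+\frac{1}{b} = \frac{3}{2}-\frac{1}{p}-\frac{1}{q}$ inside the admissible rectangle; and (iii) use the trivial monotonicity $\ell_a \hookrightarrow \ell_{a'}$ (and its mixed-norm analogue) to descend from the critical boundary into the interior.

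The main obstacle will be the bookkeeping required in step (ii): a single anchor case can, via one application of Theorem~\ref{Ranisot}, only reach a specific arc of the critical hyperplane, so one must carefully select a family of anchors whose regularity-orbits together cover the entire admissible boundary. In particular, verifying that the rectangular corners $a=q^*$ and $b=pq/(pq-p-q)$ are reached exactly---and are not cut off prematurely by the structural inequality $1\leq p_1\leq p_2<2p_1$ underlying the regularity principle---is the delicate matching step where the sharpness asserted in (b) must be carefully aligned with the admissibility region traced out by the regularity bootstrap.
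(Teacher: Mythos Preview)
Your necessity argument (a)$\Rightarrow$(b) matches the paper's exactly: the same three test forms are used (a single-row form $U_n(x,y)=x_1\sum_j y_j$ for $a\geq q^*$, the diagonal form $V_n(x,y)=\sum_j x_jy_j$ for $b\geq\frac{pq}{pq-p-q}$, and a Kahane--Salem--Zygmund form for the hyperplane constraint).

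For (b)$\Rightarrow$(a) your outline is in the right spirit, but the paper's route is considerably simpler than the ``regularity-orbit covering'' you anticipate. Rather than patching together arcs from many anchors via repeated applications of Theorem~\ref{Ranisot}, the paper secures exactly \emph{two} endpoints of the critical segment and then interpolates. The corner $(a,b)=(q^*,\tfrac{2p}{p-2})$ is obtained in Lemma~\ref{prevlemma} by two applications of the Anisotropic Regularity Principle combined with a Minkowski swap (\ref{7j}), starting from the $(2,1;1,1)$-summability delivered by Theorem~\ref{BH_HL}; the other corner $(a,b)=(2,\tfrac{pq}{pq-p-q})$ is cited directly from Hardy--Littlewood \cite{hardy}. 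The entire critical segment is then filled in at once by the interpolative H\"older inequality of Benedek--Panzone (Theorem~\ref{gen.interp}), not by further regularity moves. This interpolation step is precisely the device that dissolves the ``bookkeeping obstacle'' you flag: once two boundary points are in hand, one convex combination covers the whole line. Two further remarks: the paper treats $p=2$ as a separate case (since $\tfrac{2p}{p-2}=\infty$ and one corner degenerates to a sup-norm estimate), which your sketch does not address; and the isotropic exponent you quote, $\tfrac{2pq}{pq-p-q}$, does not lie on the critical hyperplane---the correct value is $\tfrac{4pq}{3pq-2p-2q}$.
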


The proof of Theorem \ref{main} will be developed in the sequel. As commented
above, the main technical novelty of the proof is the regularity principle
established in Section \ref{Sct RP}, which, in this particular case, reveals
sharp and subtle inclusion properties that were not accessible by preceding methods.

\medskip

We start off by recalling the following inequality sometimes credited to
Minkowski (see, for instance, \cite{garling}): if $1\leq p\leq q,$ then
\begin{equation}
\left(  \sum\limits_{i=1}^{\infty}\left(  \sum\limits_{j=1}^{\infty}\left\vert
a_{ij}\right\vert ^{p}\right)  ^{\frac{1}{p}\cdot q}\right)  ^{\frac{1}{q}%
}\leq\left(  \sum\limits_{j=1}^{\infty}\left(  \sum\limits_{i=1}^{\infty
}\left\vert a_{ij}\right\vert ^{q}\right)  ^{\frac{1}{q}\cdot p}\right)
^{\frac{1}{p}} \label{7j}%
\end{equation}
for all sequence of scalars matrices $\left(  a_{ij}\right)  .$ We will also
make use of the main result from \cite{alb}, which we state here for the
readers' convenience. From now on
\[
\mathbf{p}:=(p_{1},\ldots,p_{m})\in\lbrack1,\infty]^{m}
\]
and we denote
\[
\left\vert \frac{1}{\mathbf{p}}\right\vert :=\frac{1}{p_{1}}+\cdots+\frac
{1}{p_{m}}.
\]

\begin{theorem}
[Generalized Hardy--Littlewood inequality \cite{alb}]\label{BH_HL} Let
$\mathbf{p}:=(p_{1},\dots,p_{m})\in\lbrack1,\infty]^{m}$ be such that
\[
\left\vert \frac{1}{\mathbf{p}}\right\vert \leq\frac{1}{2} \quad\text{and}
\quad\mathbf{q}:=(q_{1},\dots,q_{m})\in\left[  \left(  1-\left\vert \frac
{1}{\mathbf{p}}\right\vert \right)  ^{-1},2\right]  ^{m}.
\]
The following are equivalent:

\begin{itemize}
\item[(1)] There is a constant $C_{m,\mathbf{p},\mathbf{q}}^{\mathbb{K}}\geq1$
such that
\begin{equation}
\left(  \sum\limits_{j_{1}=1}^{\infty}\left(  \cdots\left(  \sum
\limits_{j_{m}=1}^{\infty}\left\vert A\left(  e_{j_{1}},\ldots,e_{j_{m}%
}\right)  \right\vert ^{q_{m}}\right)  ^{\frac{q_{m-1}}{q_{m}}}\cdots\right)
^{\frac{q_{1}}{q_{2}}}\right)  ^{\frac{1}{q_{1}}}\leq C_{m,\mathbf{p}%
,\mathbf{q}}^{\mathbb{K}}\left\Vert A\right\Vert \label{i89}%
\end{equation}
for all $m$-linear forms $A \colon\ell_{p_{1}}^{n}\times\cdots\times
\ell_{p_{m}}^{n}\rightarrow\mathbb{K}$ and all positive integers $n$.

\item[(2)] The inequality
\[
\frac{1}{q_{1}}+\cdots+\frac{1}{q_{m}}\leq\frac{m+1}{2}-\left\vert {\frac
{1}{\mathbf{p}}}\right\vert
\]
is verified.
\end{itemize}
\end{theorem}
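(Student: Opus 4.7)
The plan is to prove the equivalence by treating the two implications separately. The necessity $(1) \Rightarrow (2)$ relies on probabilistic lower bounds, while the sufficiency $(2) \Rightarrow (1)$ combines a classical endpoint estimate with the Anisotropic Regularity Principle (Theorem \ref{Ranisot}) developed in Section \ref{Sct RP}.

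For the necessity direction, I would test \eqref{i89} against Kahane--Salem--Zygmund random $m$-linear forms $A_n \colon \ell_{p_1}^n \times \cdots \times \ell_{p_m}^n \to \mathbb{K}$ with independent $\pm 1$ coefficients. Standard moment estimates yield, with positive probability, $\|A_n\| \lesssim n^{(m+1)/2 - |1/\mathbf{p}|}$, while the coefficients all have modulus $1$, so the left-hand side of \eqref{i89} equals exactly $n^{1/q_1 + \cdots + 1/q_m}$. Letting $n \to \infty$ and matching the polynomial rates forces $\sum 1/q_i \leq (m+1)/2 - |1/\mathbf{p}|$, which is (2).

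For the sufficiency direction, I would first establish the symmetric endpoint in which $q_1 = \cdots = q_m = r := 2m/(m+1-2|1/\mathbf{p}|)$; here \eqref{i89} follows from the scalar Bohnenblust--Hille inequality together with iterated applications of the Khintchine inequality and Minkowski's inequality \eqref{7j}. The hypothesis $|1/\mathbf{p}| \leq 1/2$ is precisely what guarantees that $r \in [(1-|1/\mathbf{p}|)^{-1}, 2]$, so the symmetric vector $(r,\ldots,r)$ is an admissible starting point. To reach an arbitrary $\mathbf{q}$ satisfying (2), I would invoke the Anisotropic Regularity Principle (Theorem \ref{Ranisot}) coordinate by coordinate, with $S(z_1,\ldots,z_m,v) = |A(z_1,\ldots,z_m)|$ and $R_j(x,\varphi) = |\varphi(x)|$ (together with Theorem \ref{yh} which turns the weak-$p_j^\ast$ summability into $\|A\|$). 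Each application redistributes summability from one index into another, trading the slack permitted by (2) against the individual constraint that each exponent must remain inside the admissible interval $[(1-|1/\mathbf{p}|)^{-1}, 2]$.

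The main obstacle will be organizing this iterative redistribution so that every intermediate exponent vector stays in the admissible hypercube and the final vector lands exactly on the prescribed $\mathbf{q}$. The upper endpoint $q_i \leq 2$ is dictated by the range-of-validity condition $p_2 < 2p_1$ in Theorem \ref{RP} (or its $k$-variable analogue $p_2 < kp_1/(k-1)$ in Theorem \ref{RP3}), while the lower endpoint $q_i \geq (1-|1/\mathbf{p}|)^{-1}$ reflects the requirement that the global slack budget $(m+1)/2 - |1/\mathbf{p}|$ be partitioned across the $m$ indices without any single coordinate overshooting. A careful combinatorial accounting, essentially a sequence of pairwise exchanges each implemented by one application of Theorem \ref{Ranisot}, closes the argument.
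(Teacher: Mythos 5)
This theorem is stated in the paper only for the readers' convenience and is cited to \cite{alb}; the paper itself does not supply a proof, so your attempt is essentially a reconstruction and must stand on its own merits.

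Your necessity argument via the Kahane--Salem--Zygmund inequality is correct and is the standard route, and your computation of the symmetric extremal exponent $r=2m/(m+1-2|1/\mathbf{p}|)$ is accurate: the hypothesis $|1/\mathbf{p}|\leq 1/2$ is exactly what puts $r$ in the interval $[(1-|1/\mathbf{p}|)^{-1},2]$.

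The sufficiency argument, however, has a genuine gap at its core. The Anisotropic Regularity Principle (Theorem \ref{Ranisot}) trades a \emph{sum} exponent against the \emph{weak-norm} exponent of the same variable: it increases $p_1\to p_3$ only at the price of increasing $r_1\to r_3$, subject to $\frac{1}{r_1}-\frac{1}{p_1}\leq\frac{1}{r_3}-\frac{1}{p_3}$. It does \emph{not} trade one sum index against another. If you have already reached the symmetric endpoint on fixed domains $\ell_{p_1},\dots,\ell_{p_m}$, then the weak exponents $p_j^\ast$ are already pinned; taking $r_1=r_3$ in Theorem \ref{Ranisot} forces $-\frac{1}{p_1}\leq-\frac{1}{p_3}$ together with $p_3\geq p_1$, hence $p_3=p_1$, and no movement of the $\mathbf q$ vector occurs. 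So the ``coordinate by coordinate redistribution'' you envisage is not available once you start from the symmetric point with the final weak norms.

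What does work, and is how the paper itself treats the bilinear special case in Lemma \ref{7m}, is to begin from a strong weak-$1$ endpoint (a Bohnenblust--Hille-type estimate with all weak norms equal to $1$), then apply the Anisotropic Regularity Principle to push \emph{both} the weak exponent and the corresponding sum exponent up to the target values $p_j^\ast$, interleaving Minkowski's inequality \eqref{7j} to change which index is outermost. This produces certain extremal asymmetric vectors $\mathbf q$. To reach every $\mathbf q$ satisfying (2), one then needs the Interpolative H\"older inequality (Theorem \ref{gen.interp}), interpolating between these extremal corners. Your proposal omits this interpolation step entirely, and without it one only obtains a lower-dimensional family of $\mathbf q$'s, not the full simplex $\sum 1/q_i\leq (m+1)/2-|1/\mathbf p|$. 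The original proof in \cite{alb} likewise proceeds by proving a finite set of extremal cases (via Khintchine-based arguments) and then interpolating; it does not and could not have used the Regularity Principle, which is introduced in the present paper.
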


We begin with a Lemma which plays a key role in the solution of the classification problem for all sharp exponents in the anisotropic Hardy--Littlewood inequality. In the heart of its proof lies the Regularity Principle established in Section \ref{Sct RP}.

\begin{lemma}
\label{prevlemma} \label{7m}Let $E_{1},E_{2}$ be Banach spaces, $p\in
(2,\infty)$, and $q\in\lbrack2,\infty]$. Then every continuous $2$-linear
operator $U\colon E_{1}\times E_{2}\rightarrow\mathbb{K}$ is multiple $\left(
\frac{2p}{p-2},\frac{q}{q-1};p^{\ast},q^{\ast}\right)  $-summing.
\end{lemma}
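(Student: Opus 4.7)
My plan is to leverage the Anisotropic Regularity Principle (Theorem \ref{Ranisot}) to upgrade a borderline case of the generalized Hardy--Littlewood inequality (Theorem \ref{BH_HL}) into the desired multiple summing estimate. Since Theorem \ref{Ranisot} keeps the inner second-variable exponent fixed while sharpening the first-variable pair of exponents, the whole strategy hinges on identifying a starting inequality whose inner exponent already equals $q^*$ and whose first-variable pair $(p_1, r_1)$ sits on the same slope $\frac{1}{r_1} - \frac{1}{p_1}$ as the target pair $(\frac{2p}{p-2}, p^*)$.

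\textbf{Step 1 (starting inequality).} I plan to apply Theorem \ref{BH_HL} with $m = 2$, $\mathbf{p} := (\infty, q)$, and $\mathbf{q} := (2, q^*)$. All hypotheses are satisfied exactly at the boundary: $|\tfrac{1}{\mathbf{p}}| = \tfrac{1}{q} \leq \tfrac{1}{2}$ since $q \geq 2$; both $2$ and $q^*$ lie in $\bigl[(1-\tfrac{1}{q})^{-1},2\bigr] = [q^*, 2]$; and $\tfrac{1}{2} + \tfrac{1}{q^*} = \tfrac{3}{2} - \tfrac{1}{q}$ holds with equality. Theorem \ref{BH_HL} then furnishes
\[
\Bigl(\sum_{i=1}^n \Bigl(\sum_{j=1}^n |T(e_i, e_j)|^{q^*}\Bigr)^{2/q^*}\Bigr)^{1/2} \leq C \|T\|
\]
for every bilinear $T \colon \ell_\infty^n \times \ell_q^n \to \mathbb{K}$ and all $n$. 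Invoking Theorem \ref{yh} with $(p_1, p_2) = (1, q)$ (so $(p_1^*, p_2^*) = (\infty, q^*)$) translates this into the equivalent multiple summing statement: every continuous bilinear $U \colon E_1 \times E_2 \to \mathbb{K}$ is multiple $(2, q^*; 1, q^*)$-summing.

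\textbf{Step 2 (apply the anisotropic regularity principle).} I place myself in the setting of Theorem \ref{Ranisot} by taking $S(x, y, v) := |U(x, y)|$ (with $V$ a singleton), $R_1(x, \varphi) := |\varphi(x)|$ on $W_1 := B_{E_1^*}$, and $R_2(y, \psi) := |\psi(y)|$ on $W_2 := B_{E_2^*}$. The inequality from Step 1 matches the hypothesis of Theorem \ref{Ranisot} with $p_1 = 2$, $p_2 = q^*$, $r_1 = 1$, $r_2 = q^*$. Selecting $p_3 := \tfrac{2p}{p-2}$ and $r_3 := p^*$, the monotonicity conditions $p_3 \geq p_1$ and $r_3 \geq r_1$ reduce to $p > 2$ and $p^* \geq 1$; and a direct computation yields the slope condition with equality,
\[
\frac{1}{r_1} - \frac{1}{p_1} = 1 - \tfrac{1}{2} = \tfrac{1}{2} = \tfrac{p-1}{p} - \tfrac{p-2}{2p} = \frac{1}{r_3} - \frac{1}{p_3}.
\]
Theorem \ref{Ranisot} then delivers exactly the multiple $\bigl(\tfrac{2p}{p-2}, q^*; p^*, q^*\bigr)$-summing inequality claimed by the lemma.

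\textbf{Main obstacle.} The delicate point is identifying the correct starting inequality. The inner exponent $q^*$ is rigid because Theorem \ref{Ranisot} cannot alter it, and the pair $(p_1, r_1)$ must already lie on the slope $1/2$ forced by the target. These two constraints compel Theorem \ref{BH_HL} to be invoked precisely at the extreme corner of its range of validity ($\mathbf{q}$ at the endpoint of $[q^*, 2]^2$ and the exponent sum saturated). Once this boundary case is secured, the remaining verification is mechanical.
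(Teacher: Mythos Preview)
Your proof is correct and arrives at the same conclusion as the paper, but by a shorter path. The paper starts from the cruder base case $\mathbf{p}=(\infty,\infty)$, $\mathbf{q}=(1,2)$ of Theorem~\ref{BH_HL} (written with the variables in the order $j,i$), applies the Anisotropic Regularity Principle once to upgrade the $j$-side from $(1;1)$ to $(q^{\ast};q^{\ast})$, uses the Minkowski swap~(\ref{7j}) to put the sums in the order $i,j$, and then applies the Anisotropic Regularity Principle a second time to upgrade the $i$-side from $(2;1)$ to $(\tfrac{2p}{p-2};p^{\ast})$. You instead invoke Theorem~\ref{BH_HL} directly at $\mathbf{p}=(\infty,q)$, $\mathbf{q}=(2,q^{\ast})$, which lands you immediately at the paper's intermediate inequality (after its Minkowski step), so only one application of Theorem~\ref{Ranisot} is needed. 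Your route is more economical; the paper's route has the virtue of illustrating the regularity principle twice and starting from the most classical Littlewood-type endpoint.

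One small notational slip: in your Step~1, when you write ``Invoking Theorem~\ref{yh} with $(p_{1},p_{2})=(1,q)$ (so $(p_{1}^{\ast},p_{2}^{\ast})=(\infty,q^{\ast})$)'', you have the roles reversed relative to the statement of Theorem~\ref{yh}. There the $p_{i}$ are the \emph{domain} exponents (so $(p_{1},p_{2})=(\infty,q)$), and the summing exponents are the conjugates $(p_{1}^{\ast},p_{2}^{\ast})=(1,q^{\ast})$. Your conclusion, that $U$ is multiple $(2,q^{\ast};1,q^{\ast})$-summing, is nonetheless the right one.
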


\begin{proof}
Initially we observe that from Theorem \ref{BH_HL} and Theorem \ref{yh} there is a constant $C_{0}$ such that%
\[
\left(  \sum_{j=1}^{\infty}\left(  \sum_{i=1}^{\infty}\left\vert U(x_{i}%
,y_{j})\right\vert ^{2}\right)  ^{\frac{1}{2} \cdot 1}\right)  ^{\frac{1}{1}%
}\leq C_{0}\left\Vert U\right\Vert \left\Vert \left(  x_{i}\right)
\right\Vert _{w,1}\left\Vert \left(  y_{j}\right)  \right\Vert _{w,1}.
\]
By the Anisotropic Regularity Principle we have%
\[
\left(  \sum_{j=1}^{\infty}\left(  \sum_{i=1}^{\infty}\left\vert U(x_{i}%
,y_{j})\right\vert ^{2}\right)  ^{\frac{1}{2}\cdot q^{\ast}}\right)
^{\frac{1}{q^{\ast}}}\leq C_{0}\left\Vert U\right\Vert \left\Vert \left(
x_{i}\right)  \right\Vert _{w,1}\left\Vert \left(  y_{j}\right)  \right\Vert
_{w,q^{\ast}}.
\]
Since $q^{\ast}\leq2,$ by the Minkowski inequality (\ref{7j}) there holds%
\[
\left(  \sum_{i=1}^{\infty}\left(  \sum_{j=1}^{\infty}\left\vert U(x_{i}%
,y_{j})\right\vert ^{q^{\ast}}\right)  ^{\frac{1}{q^{\ast}} \cdot 2}\right)
^{\frac{1}{2}}\leq C_{0}\left\Vert U\right\Vert \left\Vert \left(
x_{i}\right)  \right\Vert _{w,1}\left\Vert \left(  y_{j}\right)  \right\Vert
_{w,q^{\ast}}.
\]
Finally, by the Anisotropic Regularity Principle we obtain%
\[
\left(  \sum_{i=1}^{\infty}\left(  \sum_{j=1}^{\infty}\left\vert U(x_{i}%
,y_{j})\right\vert ^{q^{\ast}}\right)  ^{\frac{1}{q^{\ast}} \cdot\frac
{2p}{p-2}}\right)  ^{\frac{p-2}{2p}}\leq C_{0}\left\Vert U\right\Vert
\left\Vert \left(  x_{i}\right)  \right\Vert _{w,p^{\ast}}\left\Vert \left(
y_{j}\right)  \right\Vert _{w,q^{\ast}},
\]
and the Lemma is proven.
\end{proof}

Next, we will make use of a H\"older-type inequality essentially due to
Benedek and Panzone \cite{ben} and a generalization of the
Kahane--Salem--Zygmund inequality; to assist the readers, we state them both
as we will need and cite \cite[page 50]{ff} and \cite{alb}, \cite{boas} for
their proofs.

\begin{theorem}
[Interpolative H\"older inequality]\label{gen.interp} Let $n$ be a positive
integer and
\[
q_{1}, \, q_{2}, \, q_{1}(k), \, q_{2}(k)\in\lbrack1,\infty]
\]
with $k=1,2$ be such that
\[
\left(  \frac{1}{q_{1}},\frac{1}{q_{2}}\right)  =\theta\left(  \frac{1}%
{q_{1}(1)},\frac{1}{q_{2}(1)}\right)  +\left(  1-\theta\right)  \left(
\frac{1}{q_{1}(2)},\frac{1}{q_{2}(2)}\right)
\]
for a certain $\theta\in\lbrack0,1].$ Then
\begin{align*}
&  \left(  \sum_{i_{1}=1}^{n}\left(  \sum_{i_{2}=1}^{n}|a_{i_{1},i_{2}%
}|^{q_{2}}\right)  ^{\frac{q_{1}}{q_{2}}}\right)  ^{\frac{1}{q_{1}}}\\
&  {\small \leq}\left[  \left(  \sum_{i_{1}=1}^{n}\left(  \sum_{i_{2}=1}%
^{n}|a_{i_{1},i_{2}}|^{q_{2}(1)}\right)  ^{\frac{q_{1}(1)}{q_{2}(1)}}\right)
^{\frac{1}{q_{1}(1)}}\right]  ^{\theta} \cdot\left[  \left(  \sum_{i_{1}%
=1}^{n}\left(  \sum_{i_{2}=1}^{n}|a_{i_{1},i_{2}}|^{q_{2}(2)}\right)
^{\frac{q_{1}(2)}{q_{2}(2)}}\right)  ^{\frac{1}{q_{1}(2)}}\right]  ^{1-\theta
},
\end{align*}
for all positive integers $n.$
\end{theorem}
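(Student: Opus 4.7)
The approach is a standard double application of H\"older's inequality — once for the inner sum in $i_2$ and once for the outer sum in $i_1$ — so the core of the argument is bookkeeping of exponents rather than anything structural. I would begin by rewriting the integrand as a product suggested by the convex-combination hypothesis. Concretely, since
\[
\frac{1}{q_2} \;=\; \frac{\theta}{q_2(1)} \;+\; \frac{1-\theta}{q_2(2)},
\]
the exponents $\alpha:=\theta q_2/q_2(1)$ and $\beta:=(1-\theta)q_2/q_2(2)$ satisfy $\alpha+\beta=1$, and so
\[
|a_{i_1,i_2}|^{q_2} \;=\; \bigl(|a_{i_1,i_2}|^{q_2(1)}\bigr)^{\alpha}\bigl(|a_{i_1,i_2}|^{q_2(2)}\bigr)^{\beta}.
\]
A direct application of H\"older's inequality to $\sum_{i_2}$, with conjugate exponents $1/\alpha$ and $1/\beta$, then yields
\[
\Bigl(\sum_{i_2=1}^{n}|a_{i_1,i_2}|^{q_2}\Bigr)^{1/q_2}
\;\le\;
\Bigl(\sum_{i_2=1}^{n}|a_{i_1,i_2}|^{q_2(1)}\Bigr)^{\theta/q_2(1)}
\Bigl(\sum_{i_2=1}^{n}|a_{i_1,i_2}|^{q_2(2)}\Bigr)^{(1-\theta)/q_2(2)}.
\]

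Next I would raise both sides to the $q_1$-th power and apply H\"older's inequality once more, this time in the variable $i_1$. Writing
\[
A_{i_1} := \Bigl(\sum_{i_2}|a_{i_1,i_2}|^{q_2(1)}\Bigr)^{1/q_2(1)}, \qquad
B_{i_1} := \Bigl(\sum_{i_2}|a_{i_1,i_2}|^{q_2(2)}\Bigr)^{1/q_2(2)},
\]
the previous inequality gives $\bigl(\sum_{i_2}|a_{i_1,i_2}|^{q_2}\bigr)^{q_1/q_2}\le A_{i_1}^{\theta q_1}B_{i_1}^{(1-\theta)q_1}$. Since the convex-combination hypothesis in the first coordinate reads
\[
\frac{1}{q_1}=\frac{\theta}{q_1(1)}+\frac{1-\theta}{q_1(2)},
\]
the exponents $q_1(1)/(\theta q_1)$ and $q_1(2)/((1-\theta)q_1)$ are H\"older conjugates. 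Applying H\"older to $\sum_{i_1} A_{i_1}^{\theta q_1} B_{i_1}^{(1-\theta) q_1}$ with these conjugates and then taking the $1/q_1$-power yields precisely the claimed inequality.

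The only genuinely delicate point is handling the degenerate exponents. When some of $q_1,q_2,q_1(k),q_2(k)$ equals $\infty$, the corresponding sum is interpreted as a supremum, and the corresponding conjugate is $1$ via the convention $\tfrac{1}{0}=\infty,\tfrac{1}{\infty}=0$ already adopted in the paper; in each such degenerate case the H\"older step above collapses to a trivial estimate (one factor becomes a sup, the other becomes a sum) which still fits the displayed bound. A second minor edge case is $\theta\in\{0,1\}$, where the product on the right reduces to a single bracket and there is nothing to prove. Thus the main obstacle is not conceptual but rather a careful verification that the arithmetic of the exponents aligns — in particular that $\alpha+\beta=1$ and its analogue for the outer sum both reduce to the hypothesis, and that the various powers combine to give $\theta/q_1(1)$ and $(1-\theta)/q_1(2)$ in the final exponents on the right-hand side.
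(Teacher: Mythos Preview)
Your argument is correct; it is the standard two-step application of H\"older's inequality that underlies this mixed-norm interpolation inequality, and the exponent bookkeeping you outline checks out. Note, however, that the paper does not supply its own proof of this statement: it is quoted as a known result attributed to Benedek and Panzone, with references given for the proof, so there is no in-paper argument to compare against.
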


\begin{theorem}
[Kahane--Salem--Zygmund inequality]\label{sss} Let $m,n\geq1$ and
$p_{1},\cdots, p_{m}\in\left[  2,\infty\right]  .$ There is an universal
constant $C_{m}$, depending only on $m$, and an $m$-linear mapping $A_{n}
\colon\ell_{p_{1}}^{n}\times\cdots\times\ell_{p_{m}}^{n}\rightarrow\mathbb{K}$
of the form
\[
A_{n}(z^{(1)},...,z^{(m)})=\displaystyle\sum_{i_{1},...,i_{m}=1}^{n}\pm
z_{i_{1}}^{(1)}\cdots z_{i_{m}}^{(m)}%
\]
such that
\[
\Vert A_{n}\Vert\leq C_{m}n^{\frac{m+1}{2}-\left(  \frac{1}{p_{1}}%
+\cdots+\frac{1}{p_{m}}\right)  }.
\]

\end{theorem}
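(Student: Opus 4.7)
The plan is to prove the existence of $A_n$ via the probabilistic method: consider the random $m$-linear form
\[
A_n(z^{(1)},\ldots,z^{(m)}) := \sum_{i_1,\ldots,i_m=1}^{n} \varepsilon_{i_1\cdots i_m}\, z^{(1)}_{i_1}\cdots z^{(m)}_{i_m},
\]
where $(\varepsilon_{i_1\cdots i_m})$ is an independent family of symmetric $\pm1$ Bernoulli random variables, and show that with strictly positive probability its norm obeys the desired bound. Since every sample point yields an $m$-linear form with $\pm1$ coefficients, a single good realization gives the statement. I would carry out the argument first in the case $p_1=\cdots=p_m=\infty$ (which is the classical Kahane--Salem--Zygmund estimate, as in \cite{boas}), and then reduce the general case to it by an elementary $\ell_{p_k}\hookrightarrow\ell_\infty$ comparison on each slot.

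The first key step is a pointwise moment estimate. Fix unit vectors $z^{(k)}\in B_{\ell_{p_k}^n}$ and view $A_n(z^{(1)},\ldots,z^{(m)})$ as a Rademacher sum with coefficients $a_{i_1\cdots i_m} = z^{(1)}_{i_1}\cdots z^{(m)}_{i_m}$. By Khintchine's inequality, for every $q\ge 2$,
\[
\Bigl(\mathbb{E}\bigl|A_n(z^{(1)},\ldots,z^{(m)})\bigr|^{q}\Bigr)^{1/q} \le K\sqrt{q}\,\Bigl(\sum_{i_1,\ldots,i_m} |a_{i_1\cdots i_m}|^{2}\Bigr)^{1/2} = K\sqrt{q}\,\prod_{k=1}^{m}\|z^{(k)}\|_{2}.
\]
Because $p_k\ge 2$, H\"older's inequality yields $\|z^{(k)}\|_2 \le n^{1/2-1/p_k}\|z^{(k)}\|_{p_k} \le n^{1/2-1/p_k}$. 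Hence, setting $\sigma := \prod_k n^{1/2-1/p_k} = n^{m/2-\sum_k 1/p_k}$, we obtain sub-Gaussian tails: for every $t>0$,
\[
\mathbb{P}\Bigl(\bigl|A_n(z^{(1)},\ldots,z^{(m)})\bigr| \ge t\sigma\Bigr) \le 2\exp\bigl(-c\,t^{2}\bigr),
\]
for a universal $c>0$.

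The second step is to pass from fixed vectors to the supremum over the product of unit balls by a covering argument. Choose an $\varepsilon$-net $\mathcal{N}_k\subset B_{\ell_{p_k}^n}$ of cardinality at most $(3/\varepsilon)^{n}$ for each $k$, so the product net has size $(3/\varepsilon)^{mn}$. A union bound combined with the tail estimate above gives
\[
\mathbb{P}\Bigl(\max_{(z^{(1)},\ldots,z^{(m)})\in \mathcal{N}_1\times\cdots\times\mathcal{N}_m} |A_n(z^{(1)},\ldots,z^{(m)})| \ge t\sigma\Bigr) \le 2\Bigl(\tfrac{3}{\varepsilon}\Bigr)^{mn}\exp(-ct^{2}).
\]
A standard multilinear approximation lemma then boosts the control on the net to control on the whole product of balls at the cost of a constant factor depending on $m$ (provided $\varepsilon$ is chosen a fixed small constant depending only on $m$). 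Picking $t = C_m\sqrt{n}$ with $C_m$ large enough makes the right-hand side strictly less than $1$, hence there exists a realization of the signs for which
\[
\|A_n\| \le C_m\sqrt{n}\cdot\sigma = C_m\, n^{(m+1)/2 -\sum_k 1/p_k},
\]
which is the announced bound.

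The main technical obstacle is the net/approximation step: one must verify that an approximation of each $z^{(k)}$ by a nearby net point $\tilde z^{(k)}$ propagates through the $m$-linear form with only a multiplicative error controllable independently of $n$. This is where the $m$-linearity and the triangle inequality interact delicately, but a standard telescoping argument replacing one slot at a time, together with the choice $\varepsilon = \varepsilon(m)$ sufficiently small, closes the loop without introducing any additional $n$-dependence and absorbs the resulting constants into $C_m$.
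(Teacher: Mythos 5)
Your argument is the standard probabilistic proof of the Kahane--Salem--Zygmund estimate and is essentially correct. The paper itself does not reprove this theorem (it cites \cite{alb} and \cite{boas}), and those sources use the same general strategy: a Khintchine moment bound for the random $m$-linear Rademacher chaos giving sub-Gaussian tails, followed by an $\varepsilon$-net covering of the product of unit balls, a union bound, and a telescoping argument to upgrade from net to full ball. Your choice $t = C_m\sqrt{n}$ indeed defeats the $(\mathrm{const})^{mn}$ cardinality of the net, and the $\ell_2$-versus-$\ell_{p_k}$ H\"older comparison $\|z^{(k)}\|_2 \le n^{1/2-1/p_k}\|z^{(k)}\|_{p_k}$ inside the moment estimate is exactly what produces the exponent $\frac{m+1}{2}-\sum_k \frac{1}{p_k}$.

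One small caveat concerns your opening plan, not the actual execution. The proposed reduction of the general case to $p_1=\cdots=p_m=\infty$ ``by an elementary $\ell_{p_k}\hookrightarrow\ell_\infty$ comparison on each slot'' does not work as stated: since $B_{\ell_{p_k}^n}\subset B_{\ell_\infty^n}$, the inclusion only gives $\|A_n\|_{\mathcal{L}(\ell_{p_1}^n,\ldots;\mathbb{K})}\le \|A_n\|_{\mathcal{L}(\ell_\infty^n,\ldots;\mathbb{K})}\le C_m\,n^{(m+1)/2}$, which loses the improvement $n^{-\sum_k 1/p_k}$. Fortunately the body of your proof never invokes that reduction --- the gain comes from the H\"older step inside the Khintchine estimate, and your $\varepsilon$-nets are taken directly in the $\ell_{p_k}^n$-balls --- so the argument is sound as written, and you should simply delete the misleading reduction remark. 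A purely cosmetic point: over $\mathbb{K}=\mathbb{C}$ the ball $B_{\ell_{p_k}^n}$ has real dimension $2n$, so the net size is $(3/\varepsilon)^{2n}$, but this factor is absorbed into $C_m$ exactly as in the real case.
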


We have gathered all the tools needed to deliver a proof of the main result of
this section classifying all possible exponents $a,b>0$ for which there is a
Hardy--Littlewood-type inequality for bilinear forms $U \colon\ell_{p}%
^{n}\times\ell_{q}^{n}\rightarrow\mathbb{K}$ with $p,q\in\lbrack2,\infty]$ and
$\frac{1}{p}+\frac{1}{q}<1:$

\bigskip

\noindent\textit{Proof of Theorem \ref{main}.}

\medskip We will divide our analysis in two cases: when $p>2$ and when $p=2$.
Let us start the proof in the case $p>2$.

\noindent(b)$\Rightarrow$(a). Suppose that $\left(  a,b\right)  \in
\lbrack\frac{q}{q-1},2]\times\lbrack\frac{pq}{pq-p-q},\frac{2p}{p-2}]$ with
\begin{equation}
\label{9hb}\frac{1}{a}+\frac{1}{b}\leq\frac{3}{2}-\left(  \frac{1}{p}+\frac
{1}{q}\right)  .
\end{equation}

It suffices to consider the case in which we have an equality in (\ref{9hb}).
There is a $\theta\in\lbrack0,1]$ such that
\[
\left(  \frac{1}{b},\frac{1}{a}\right)  =\theta\left(  \frac{p-2}{2p}%
,\frac{q-1}{q}\right)  +\left(  1-\theta\right)  \left(  \frac{1}{\lambda
},\frac{1}{2}\right)  ,
\]
where $\lambda=\frac{pq}{pq-p-q}.$ Applying Theorem \ref{gen.interp}, Lemma
\ref{prevlemma}, combined with \cite[Theorems 1 and 2]{hardy}, we conclude
that there are constants $C_{0},C_{1}\geq1$ such that
\begin{align*}
&  \left(  \sum_{i=1}^{n}\left(  \sum_{j=1}^{n}\left\vert U(e_{i}%
,e_{j})\right\vert ^{a}\right)  ^{\frac{1}{a} \cdot b}\right)  ^{\frac{1}{b}%
}\\
&  \leq\left[  \left(  \sum_{i=1}^{n}\left(  \sum_{j=1}^{n}\left\vert
U(e_{i},e_{j})\right\vert ^{q^{\ast}}\right)  ^{\frac{1}{q^{\ast}} \cdot
\frac{2p}{p-2}}\right)  ^{\frac{p-2}{2p}}\right]  ^{\theta} \cdot\left[
\left(  \sum_{i=1}^{n}\left(  \sum_{j=1}^{n}\left\vert U(e_{i},e_{j}%
)\right\vert ^{2}\right)  ^{\frac{\lambda}{2}}\right)  ^{\frac{1}{\lambda}%
}\right]  ^{1-\theta}\\
&  \leq\left(  C_{0}\left\Vert U\right\Vert \right)  ^{\theta} \cdot\left(
C_{1}\left\Vert U\right\Vert \right)  ^{1-\theta}\\
&  =\left(  C_{0}\right)  ^{\theta}C_{1}^{1-\theta}\left\Vert U\right\Vert .
\end{align*}

The case $\left(  a,b\right)  \in\left(  \lbrack\frac{q}{q-1},2]\times
\lbrack\frac{2p}{p-2},\infty)\right)  \cup\left(  \lbrack2,\infty
)\times\lbrack\frac{pq}{pq-p-q},\infty)\right)  $ is a straightforward
consequence of the previous result for $\theta=0$ and $\theta=1$.

\medskip

\noindent(a)$\Rightarrow$(b). For any positive integer $n$, consider the
bilinear operator%
\[
U_{n} \colon\ell_{p}^{n}\times\ell_{q}^{n}\rightarrow\mathbb{K}%
\]
given by%
\[
U_{n}\left(  x,y\right)  =x_{1}\sum\limits_{j=1}^{n}y_{j}.
\]
Since $\left\Vert U_{n}\right\Vert =n^{\frac{1}{q^{\ast}}}$, plugging $U_{n}$
into (\ref{811}) we conclude that%
\[
n^{\frac{1}{a}}\leq C_{p,q}n^{\frac{1}{q^{\ast}}}%
\]
for a certain constant $C_{p,q}$, and since $n$ is arbitrary we conclude that
$a\geq\frac{q}{q-1}$. Now we consider the bilinear operator%
\[
V_{n}\colon\ell_{p}^{n}\times\ell_{q}^{n}\rightarrow\mathbb{K}%
\]
given by%
\[
V_{n}\left(  x,y\right)  =\sum\limits_{j=1}^{n}x_{j}y_{j}.
\]
Since $\left\Vert V_{n}\right\Vert =n^{1-\left(  \frac{1}{p}+\frac{1}%
{q}\right)  }$, plugging $V_{n}$ into (\ref{811}) we conclude that%
\[
n^{\frac{1}{b}}\leq C_{p,q}n^{1-\left(  \frac{1}{p}+\frac{1}{q}\right)  }%
\]
for a certain constant $C_{p,q}$, and thus%
\[
b\geq\frac{pq}{pq-p-q}.
\]
It remains to verify that for $\left(  a,b\right)  \in\lbrack\frac{q}%
{q-1},\infty)\times\lbrack\frac{pq}{pq-p-q},\infty)$ the exponents must obey
(\ref{obey}). Let $A_{n} \colon\ell_{p}^{n}\times\ell_{q}^{n}\rightarrow
\mathbb{K}$ be the bilinear form given by the Kahane--Salem--Zygmund
inequality. Using (\ref{811}) with $A_{n}$ we obtain%
\[
n^{\frac{1}{a}+\frac{1}{b}}\leq n^{\frac{3}{2}-\left(  \frac{1}{p}+\frac{1}%
{q}\right)  },
\]
and thus%
\[
\frac{1}{a}+\frac{1}{b}\leq\frac{3}{2}-\left(  \frac{1}{p}+\frac{1}{q}\right)
\]

This concludes the proof of Theorem \ref{main} when $p>2$.

\medskip

Let us turn our attention to the case $p=2$. Initially, we revisit the proof
of Lemma \ref{7m} and note that if $E_{1}, \ E_{2}$ are Banach spaces and
$q\in\lbrack2,\infty),$ then every continuous $2$-linear operator $T \colon
E_{1}\times E_{2}\rightarrow\mathbb{K}$ is multiple $\left(  \infty,q^{\ast
};2,q^{\ast}\right)  $-summing. Thus, there is a constant $C_{0}\geq1$ such
that
\begin{equation}
\left(  \sup_{i}\left(  \sum_{j=1}^{n}\left\vert U(e_{i},e_{j})\right\vert
^{q^{\ast}}\right)  ^{\frac{1}{q^{\ast}}}\right)  \leq C_{0}\left\Vert
U\right\Vert , \label{mmmm8}%
\end{equation}
for all $U \colon\ell_{2}^{n}\times\ell_{q}^{n}\rightarrow\mathbb{K}$ and all
positive integers $n$. We proceed with the proof. \medskip

\noindent(b)$\Rightarrow$(a). Suppose that $\left(  a,b\right)  \in(\frac
{q}{q-1},2]\times\lbrack\frac{2q}{q-2},\infty)$ with
\begin{equation}
\label{poy1}\frac{1}{a}+\frac{1}{b}\leq\frac{1}{q^{\ast}}.
\end{equation}

It suffices to consider the case in which we have an equality in (\ref{poy1}).
We can find $\theta\in\lbrack0,1)$ such that
\[
\left(  \frac{1}{b},\frac{1}{a}\right)  =\theta\left(  \frac{1}{\infty}%
,\frac{q-1}{q}\right)  +\left(  1-\theta\right)  \left(  \frac{q-2}{2q}%
,\frac{1}{2}\right)  .\text{ }%
\]
By Theorem \ref{gen.interp}, \cite[Theorem 2]{hardy} and (\ref{mmmm8}) there exist
constants $C_{0},C_{1}$ such that
\begin{align*}
&  \left(  \sum_{i=1}^{n}\left(  \sum_{j=1}^{n}\left\vert U(e_{i}%
,e_{j})\right\vert ^{a}\right)  ^{\frac{1}{a}\cdot b}\right)  ^{\frac{1}{b}}\\
&  \leq\left[  \left(  \sup_{i}\left(  \sum_{j=1}^{n}\left\vert U(e_{i}%
,e_{j})\right\vert ^{q^{\ast}}\right)  ^{\frac{1}{q^{\ast}}}\right)  \right]
^{\theta}\cdot\left[  \left(  \sum_{i=1}^{n}\left(  \sum_{j=1}^{n}\left\vert
U(e_{i},e_{j})\right\vert ^{2}\right)  ^{\frac{1}{2} \cdot\frac{2q}{q-2}%
}\right)  ^{\frac{q-2}{2q}}\right]  ^{1-\theta}\\
&  \leq\left(  C_{0}\left\Vert U\right\Vert \right)  ^{\theta} \cdot\left(
C_{1}\left\Vert U\right\Vert \right)  ^{1-\theta}\\
&  =\left(  C_{0}\right)  ^{\theta}C_{1}^{1-\theta}\left\Vert U\right\Vert .
\end{align*}
The case $\left(  a,b\right)  \in\lbrack2,\infty)\times\lbrack\frac{2q}%
{q-2},\infty)$ is a straightforward consequence of the previous result for
$\theta=0$. The proof of (a)$\Rightarrow$(b) is a consequence of the
Kahane--Salem--Zygmund inequality as before. This concludes the proof of
Theorem \ref{main}. \hfill$\square$

\bigskip

We conclude this section {commenting on the case where the sums are in the
reverse order. Arguing by symmetry, the following also holds}: let
$p\in\lbrack2,\infty]$, $q\in\lbrack2,\infty]$ with $\frac{1}{p}+\frac{1}%
{q}<1,$ and $a,b>0.$ The following assertions are equivalent:

\begin{itemize}
\item[(a)] There is a constant $C\geq1$ such that
\[
\left(  \sum_{j=1}^{n}\left(  \sum_{i=1}^{n}\left\vert U(e_{i},e_{j}
)\right\vert ^{a}\right)  ^{\frac{1}{a}\cdot b}\right)  ^{\frac{1}{b}}\leq
C\left\Vert U\right\Vert
\]
for all bilinear operators $U \colon\ell_{p}^{n}\times\ell_{q}^{n}%
\rightarrow\mathbb{K}$ and all positive integers $n$.

\item[(b)] The exponents $a,b$ satisfy $\left(  a,b\right)  \in\lbrack\frac{p}
{p-1},\infty)\times\lbrack\frac{pq}{pq-p-q},\infty)$ with
\[
\frac{1}{a}+\frac{1}{b}\leq\frac{3}{2}-\left(  \frac{1}{p}+\frac{1}{q}\right)
.
\]

\end{itemize}

\bigskip

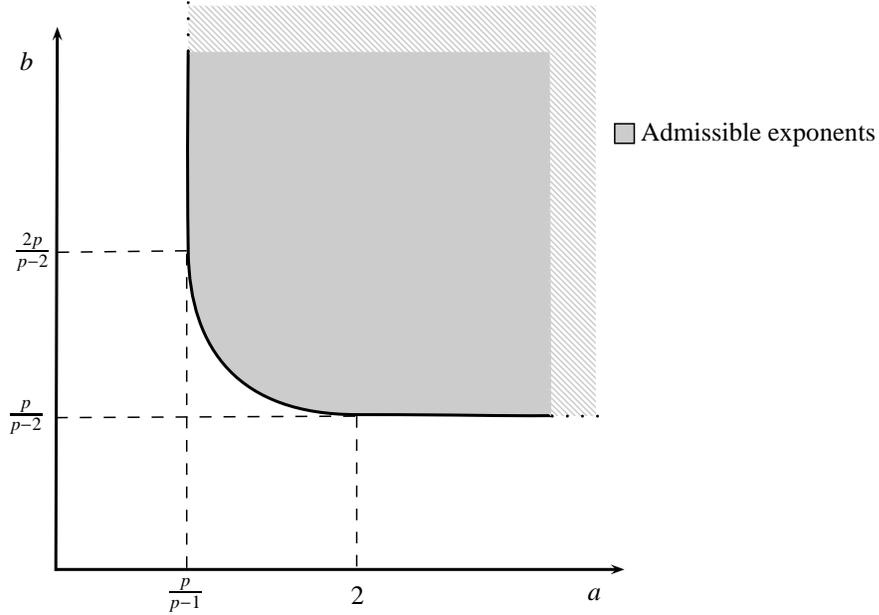
\begin{figure}[h]
\scalebox{1} 
{ \begin{pspicture}(0,-4.078848)(12.528457,4.0588474)
\definecolor{color1113b}{rgb}{0.8,0.8,0.8}
\psline[linewidth=0.04cm,arrowsize=0.05291667cm 2.0,arrowlength=1.4,arrowinset=0.4]{->}(1.5610156,-3.5411522)(1.5810156,3.6988478)
\psline[linewidth=0.04cm,arrowsize=0.05291667cm 2.0,arrowlength=1.4,arrowinset=0.4]{->}(1.5610156,-3.5211523)(9.121016,-3.5211523)
\usefont{T1}{ptm}{m}{n}
\rput(1.1724707,3.2438476){$b$}
\usefont{T1}{ptm}{m}{n}
\rput(8.72247,-3.8561523){$a$}
\psline[linewidth=0.02cm,linestyle=dashed,dash=0.16cm 0.16cm](1.5610156,-1.5011524)(5.5410156,-1.4811523)
\psline[linewidth=0.02cm,linestyle=dashed,dash=0.16cm 0.16cm](1.5810156,0.69884765)(3.3010156,0.71884763)
\psbezier[linewidth=0.04,fillstyle=solid,fillcolor=color1113b](3.3210156,3.3788476)(3.3210156,3.2388477)(3.3010156,1.9988476)(3.3210156,0.69884765)(3.3410156,-0.60115236)(4.0010157,-1.4611523)(5.5610156,-1.4611523)(7.1210155,-1.4611523)(7.341016,-1.4811523)(8.1610155,-1.4746797)
\psline[linewidth=0.02cm,linestyle=dashed,dash=0.16cm 0.16cm](3.3010156,0.6788477)(3.3010156,-3.5011523)
\usefont{T1}{ptm}{m}{n}
\rput(5.5724707,-3.8561523){$2$}
\psline[linewidth=0.02cm,linestyle=dashed,dash=0.16cm 0.16cm](5.5610156,-1.4811523)(5.5610156,-3.4811523)
\usefont{T1}{ptm}{m}{n}
\rput(3.2824707,-3.8561523){$\frac{p}{p-1}$}
\usefont{T1}{ptm}{m}{n}
\rput(1.1624707,-1.4961524){$\frac{p}{p-2}$}
\usefont{T1}{ptm}{m}{n}
\rput(1.2524707,0.70384765){$\frac{2p}{p-2}$}
\psline[linewidth=0.002,linecolor=color1113b,fillstyle=solid,fillcolor=color1113b](3.3410156,3.3588476)(8.121016,3.3588476)(8.141016,-1.4611523)(3.3410156,3.3388476)(3.3410156,3.3388476)
\usefont{T1}{ptm}{m}{n}
\rput(10.89333,2.2638476){Admissible exponents}
\psframe[linewidth=0.02,dimen=outer,fillstyle=solid,fillcolor=color1113b](9.261016,2.4188476)(8.981015,2.1388476)
\psframe[linewidth=0.002,linecolor=color1113b,linestyle=dotted,dotsep=0.16cm,dimen=outer,fillstyle=vlines,hatchwidth=0.02,hatchangle=45.0,hatchsep=0.0332,hatchcolor=color1113b](8.741015,3.9788477)(3.3210156,3.3588476)
\psframe[linewidth=0.002,linecolor=white,linestyle=dotted,dotsep=0.16cm,dimen=outer,fillstyle=vlines,hatchwidth=0.02,hatchangle=45.0,hatchsep=0.0332,hatchcolor=color1113b](8.741015,3.3588476)(8.121016,-1.4811523)\psline[linewidth=0.04cm,fillcolor=color1113b,linestyle=dotted,dotsep=0.16cm](3.3210156,3.3588476)(3.3210156,4.0388474)
\psline[linewidth=0.04cm,fillcolor=color1113b,linestyle=dotted,dotsep=0.16cm](8.141016,-1.4811523)(8.781015,-1.4811523)
\end{pspicture}
}\caption{Plot of the region of all admissible anisotropic exponents $(a,b)$
for which the Hardy-Littlewood inequality remains universally bounded in the
case $p=q$. The curve joining the points $(\frac{p}{p-1}, \frac{2p}{p-2})$ and
$(2, \frac{p}{p-2})$ is the hyperbola $b=\frac{2p\cdot a}{(3p-4)\cdot a - 2p}%
$.}%
\end{figure}

\section{Dimension blow-up}

Theorem \ref{main} is a complete classification of all possible anisotropic
Hardy-Littlewood type estimates. In this Section we turn our attention to the
order in which the estimates blow-up with respect to the dimension in the
cases when the exponents $(a,b)$ are out of the range predicted by Theorem
\ref{main}. In this scenario, there is no such a constant $C$, independent of
dimension $n$, for which the inequality
\begin{equation}
\label{ineq-blow-upSect}\left(  \sum_{i=1}^{n}\left(  \sum_{j=1}^{n}\left\vert
T(e_{i},e_{j})\right\vert ^{a}\right)  ^{\frac{1}{a}\cdot b}\right)
^{\frac{1}{b}}\leq C \cdot\left\Vert T\right\Vert
\end{equation}
is satisfied universally for all bilinear operators $T\colon\ell_{p}^{n}%
\times\ell_{q}^{n}\rightarrow\mathbb{K}$. Our goal is to obtain the precise
dependence arising on $n$.

Hereafter in this Section, we denote a non Hardy--Littlewood pair of exponents
by $\left(  r_{1},r_{2}\right)  $ and divide the region occupied by the non
Hardy--Littlewood exponents $\left(  r_{1},r_{2}\right)  $ in four sub-regions:

\begin{itemize}
\item[(R1)] $\left(  r_{1},r_{2}\right)  $ such that $q^{\ast}\leq r_{1}\leq2$
and%
\[
\frac{1}{r_{1}}+\frac{1}{r_{2}}>\frac{3}{2}-\left(  \frac{1}{p}+\frac{1}%
{q}\right)  .
\]

\item[(R2)] $\left(  r_{1},r_{2}\right)  $ such that $r_{1}<q^{\ast}$ and
$r_{2}<\frac{2p}{p-2}.$

\item[(R3)] $\left(  r_{1},r_{2}\right)  $ such that $r_{1}<q^{\ast}$ and
$r_{2}>\frac{2p}{p-2}.$

\item[(R4)] $\left(  r_{1},r_{2}\right)  $  such that $r_{1}>2$ and
$r_{2}<\frac{pq}{pq-p-q}.$
\end{itemize}

We will also use the following notation:
\[
\mathscr{B}^{n}_{p,q} := \left\{  T\colon\ell_{p}^{n}\times\ell_{q}%
^{n}\rightarrow\mathbb{K} \ : \ T \text{ is a bilinear form and } \|T\| = 1
\right\}  .
\]

\begin{proposition}
\label{pacocamento-rate1} Let $p\in\left(  2,\infty\right]  $, $q\in
\lbrack2,\infty]$.

\begin{itemize}
\item[(i)] If $\left(  r_{1},r_{2}\right)  $ are in (R1) or in (R2), then
\[
\sup\limits_{\mathscr{B}^{n}_{p,q}} \ \left(  \sum_{i=1}^{n}\left(  \sum
_{j=1}^{n}\left\vert T(e_{i},e_{j})\right\vert ^{r_{1}}\right)  ^{\frac
{1}{r_{1}}\cdot r_{2}}\right)  ^{\frac{1}{r_{2}}} = \operatorname*{O} \left(
n^{\frac{1}{r_{1}}+\frac{1}{r_{2}}-\left(  \frac{3}{2}-\left(  \frac{1}%
{p}+\frac{1}{q}\right)  \right)  } \right)  ,
\]
and the exponent $\frac{1}{r_{1}}+\frac{1}{r_{2}}-\left(  \frac{3}{2}-\left(
\frac{1}{p}+\frac{1}{q}\right)  \right)  $ is optimal.

\item[(ii)] If $\left(  r_{1},r_{2}\right)  $ are in (R3), then
\[
\sup\limits_{\mathscr{B}^{n}_{p,q}} \ \left(  \sum_{i=1}^{n}\left(  \sum
_{j=1}^{n}\left\vert T(e_{i},e_{j} )\right\vert ^{r_{1}}\right)  ^{\frac
{1}{r_{1}}\cdot r_{2}}\right)  ^{\frac{1}{r_{2}}} = \operatorname*{O} \left(
n^{\frac{1}{r_{1}}-\frac{1}{q^{\ast}}} \right)  ,
\]
and the exponent $\frac{1}{r_{1}}-\frac{1}{q^{\ast}}$ is optimal.

\item[(iii)] If $\left(  r_{1},r_{2}\right)  $ are in (R4)$,$ then
\[
\sup\limits_{\mathscr{B}^{n}_{p,q}} \ \left(  \sum_{i=1}^{n}\left(  \sum
_{j=1}^{n}\left\vert T(e_{i},e_{j})\right\vert ^{r_{1}}\right)  ^{\frac
{1}{r_{1}}\cdot r_{2}}\right)  ^{\frac{1}{r_{2}}} = \operatorname*{O} \left(
n^{\frac{1}{r_{2}}-\frac{pq-p-q}{pq}} \right)  ,
\]
and the exponent $\frac{1}{r_{2}}-\frac{pq-p-q}{pq}$ is optimal.
\end{itemize}
\end{proposition}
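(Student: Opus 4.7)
The plan is to establish each upper bound by reducing the non-admissible pair $(r_1,r_2)$ to a nearby \emph{admissible} pair $(a,b)$ provided by Theorem \ref{main}, paying the price of this reduction through the elementary finite-dimensional embedding $\|(x_j)\|_{\ell_r^n}\leq n^{\frac{1}{r}-\frac{1}{s}}\|(x_j)\|_{\ell_s^n}$, valid whenever $r\leq s$. Optimality in each case will be demonstrated by plugging a tailored bilinear form into the estimate and matching norms.

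For the upper bounds in regions (R1) and (R2) I would select $(a,b)$ on the Hardy--Littlewood boundary $\frac{1}{a}+\frac{1}{b}=\frac{3}{2}-(\frac{1}{p}+\frac{1}{q})$ satisfying $a\geq r_1$ and $b\geq r_2$: the natural choice in (R1) is $a=r_1$, while in (R2) one may take $a=q^{\ast}$ and $b=\frac{2p}{p-2}$. Applying Theorem \ref{main} and then embedding both inner and outer $\ell^n$-norms yields precisely the exponent $\frac{1}{r_1}+\frac{1}{r_2}-(\frac{3}{2}-\frac{1}{p}-\frac{1}{q})$ asserted in (i). For (R3), since $r_2\geq\frac{2p}{p-2}$ already lies in the admissible range, only $r_1$ needs to be ``inflated'' up to $q^{\ast}$, producing $n^{\frac{1}{r_1}-\frac{1}{q^{\ast}}}$; symmetrically, in (R4) only $r_2$ is inflated to $\frac{pq}{pq-p-q}$, producing $n^{\frac{1}{r_2}-\frac{pq-p-q}{pq}}$.

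For the sharpness claims, in (R1) and (R2) I would use the Kahane--Salem--Zygmund form $A_n$ from Theorem \ref{sss}: one has $\|A_n\|\leq C_{2}\,n^{\frac{3}{2}-\frac{1}{p}-\frac{1}{q}}$ while $|A_n(e_i,e_j)|=1$, so the anisotropic sum equals $n^{\frac{1}{r_1}+\frac{1}{r_2}}$, forcing the claimed blow-up. For (R3), the bilinear form $U_n(x,y)=x_1\sum_j y_j$ already used in the proof of Theorem \ref{main} has $\|U_n\|=n^{\frac{1}{q^{\ast}}}$, $U_n(e_i,e_j)=\delta_{i1}$, and anisotropic sum $n^{\frac{1}{r_1}}$. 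For (R4), the form $V_n(x,y)=\sum_j x_j y_j$ has $\|V_n\|=n^{1-\frac{1}{p}-\frac{1}{q}}$, $V_n(e_i,e_j)=\delta_{ij}$, and anisotropic sum $n^{\frac{1}{r_2}}$.

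The main technical point is the case-by-case verification that each intermediate pair $(a,b)$ selected for inflation genuinely lies in the admissible region of Theorem \ref{main}, i.e.\ $a\geq q^{\ast}$, $b\geq\frac{pq}{pq-p-q}$, and $\frac{1}{a}+\frac{1}{b}\leq\frac{3}{2}-(\frac{1}{p}+\frac{1}{q})$. These are routine algebraic checks; the geometric fact underpinning them is that the Hardy--Littlewood boundary passes through the two corner points $(q^{\ast},\frac{2p}{p-2})$ and $(2,\frac{pq}{pq-p-q})$, which makes the choices above compatible. The degenerate case $p=2$ is handled identically after replacing the first corner by $(q^{\ast},\infty)$ and invoking the supremum variant of Lemma \ref{prevlemma} derived within the proof of Theorem \ref{main}.
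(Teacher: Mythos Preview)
Your proposal is correct and matches the paper's proof essentially line by line: the paper phrases the reduction step as ``H\"older's inequality for mixed sums'' against the constant matrix $1$, which is exactly your elementary embedding $\|\cdot\|_{\ell^n_r}\le n^{1/r-1/s}\|\cdot\|_{\ell^n_s}$, and the admissible targets $(a,b)$ you select in each region (namely $(r_1,\delta)$ in (R1), $(q^{\ast},\tfrac{2p}{p-2})$ in (R2), $(q^{\ast},r_2)$ in (R3), $(r_1,\tfrac{pq}{pq-p-q})$ in (R4)) as well as the three test forms for optimality coincide with those in the paper. Your closing remark on $p=2$ is unnecessary here, since the proposition assumes $p\in(2,\infty]$; that case is treated separately in the paper as Proposition~\ref{pacocamento-rate2}.
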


\begin{proof}
(i) Suppose that $\left(  r_{1},r_{2}\right)  $ are in (R1) and let $\delta>0$ be
such that%
\[
\frac{1}{r_{1}}+\frac{1}{\delta}=\frac{3}{2}-\left(  \frac{1}{p}+\frac{1}%
{q}\right)  .
\]
By H\"{o}lder inequality for mixed sums and Theorem \ref{main}, if%
\[
\frac{1}{X}+\frac{1}{\delta}=\frac{1}{r_{2}},
\]
then%
\begin{align*}
\left(  \sum_{i=1}^{n}\left(  \sum_{j=1}^{n}\left\vert T(e_{i}%
,e_{j})\right\vert ^{r_{1}}\right)  ^{\frac{1}{r_{1}}\cdot r_{2}}\right)
^{\frac{1}{r_{2}}}
&  \leq \left(  \sum_{i=1}^{n}\left(  \sum_{j=1}^{n}\left\vert T(e_{i}%
,e_{j})\right\vert ^{r_{1}}\right)  ^{\frac{1}{r_{1}}\cdot\delta}\right)
^{\frac{1}{\delta}}\cdot\left(  \sum_{i=1}^{n}\left(  \sup_{j}1\right)
^{X}\right)  ^{\frac{1}{X}}\\
&  \leq C\left\Vert T\right\Vert n^{\frac{1}{X}}\\
&  =C\left\Vert T\right\Vert n^{\frac{1}{r_{1}}+\frac{1}{r_{2}}-\left(
\frac{3}{2}-\left(  \frac{1}{p}+\frac{1}{q}\right)  \right)  }.
\end{align*}
To prove the optimality, consider%
\[
A_{n} \colon \ell_{p}\times\ell_{q}\rightarrow\mathbb{K}%
\]
given by the Kahane--Salem--Zygmund inequality. If%
\[
\left(  \sum_{i=1}^{n}\left(  \sum_{j=1}^{n}\left\vert A_{n}(e_{i}%
,e_{j})\right\vert ^{r_{1}}\right)  ^{\frac{1}{r_{1}}\cdot r_{2}}\right)
^{\frac{1}{r_{2}}}\leq Cn^{t}\left\Vert A_{n}\right\Vert
\]
for a certain $t$, then%
\[
n^{\frac{1}{r_{1}}+\frac{1}{r_{2}}}\leq Cn^{t}n^{\frac{3}{2}-\left(  \frac
{1}{p}+\frac{1}{q}\right)  }.
\]
Since $n$ is arbitrary, we conclude that%
\[
t\geq\frac{1}{r_{1}}+\frac{1}{r_{2}}-\left(  \frac{3}{2}-\left(  \frac{1}%
{p}+\frac{1}{q}\right)  \right).
\]
Let us now consider the case when  $\left(  r_{1},r_{2}\right)  $ lies in the region (R2). Let
$X_{1},X_{2}$ be such that%
\begin{align*}
\frac{1}{r_{1}} &  =\frac{1}{q^{\ast}}+\frac{1}{X_{1}},\\
\frac{1}{r_{2}} &  =\frac{1}{\frac{2p}{p-2}}+\frac{1}{X_{2}}.
\end{align*}
By the H\"{o}lder inequality for mixed sums and Theorem \ref{main}, we can estimate
\begin{align*}
\left(  \sum_{i=1}^{n}\left(  \sum_{j=1}^{n}\left\vert T(e_{i}%
,e_{j})\right\vert ^{r_{1}}\right)  ^{\frac{1}{r_{1}}\cdot r_{2}}\right)
^{\frac{1}{r_{2}}}
&  \leq\left(  \sum_{i=1}^{n}\left(  \sum_{j=1}^{n}\left\vert T(e_{i}%
,e_{j})\right\vert ^{q^{\ast}}\right)  ^{\frac{1}{q^{\ast}}\cdot\frac
{2p}{p-2}}\right)  ^{\frac{p-2}{2p}}\cdot\left(  \sum_{i=1}^{n}\left(
\sum_{j=1}^{n}\left\vert 1\right\vert ^{X_{1}}\right)  ^{\frac{1}{X_{1}}\cdot
X_{2}}\right)  ^{\frac{1}{X_{2}}}\\
&  \leq C\left\Vert T\right\Vert n^{\frac{1}{r_{1}}+\frac{1}{r_{2}}-\frac
{1}{q^{\ast}}-\frac{1}{\frac{2p}{p-2}}}\\
&  =C\left\Vert T\right\Vert n^{\frac{1}{r_{1}}+\frac{1}{r_{2}}-\left(
\frac{3}{2}-\left(  \frac{1}{p}+\frac{1}{q}\right)  \right)  }.
\end{align*}
To prove the optimality, consider%
\[
A_{n} \colon \ell_{p}\times\ell_{q}\rightarrow\mathbb{K}%
\]
given by the Kahane--Salem--Zygmund inequality. If%
\[
\left(  \sum_{i=1}^{n}\left(  \sum_{j=1}^{n}\left\vert A_{n}(e_{i}%
,e_{j})\right\vert ^{r_{1}}\right)  ^{\frac{1}{r_{1}}\cdot r_{2}}\right)
^{\frac{1}{r_{2}}}\leq Cn^{t}\left\Vert  A_{n}\right\Vert ,
\]
for a certain $t,$ then%
\[
n^{\frac{1}{r_{1}}+\frac{1}{r_{2}}}\leq Cn^{t}n^{\frac{3}{2}-\left(  \frac
{1}{p}+\frac{1}{q}\right)  }.
\]
Since $n$ is arbitrary, we conclude that%
\[
t\geq\frac{1}{r_{1}}+\frac{1}{r_{2}}-\left(  \frac{3}{2}-\left(  \frac{1}%
{p}+\frac{1}{q}\right)  \right)  .
\]
\noindent (ii) By the H\"{o}lder inequality for mixed sums and Theorem \ref{main}, if%
\[
\frac{1}{q^{\ast}}+\frac{1}{\delta}=\frac{1}{r_{1}},
\]
then%
\begin{align*}
\left(  \sum_{i=1}^{n}\left(  \sum_{j=1}^{n}\left\vert T(e_{i}%
,e_{j})\right\vert ^{r_{1}}\right)  ^{\frac{1}{r_{1}}\cdot r_{2}}\right)
^{\frac{1}{r_{2}}}
&  \leq\left(  \sum_{i=1}^{n}\left(  \sum_{j=1}^{n}\left\vert T(e_{i}%
,e_{j})\right\vert ^{q^{\ast}}\right)  ^{\frac{1}{q^{\ast}}\cdot r_{2}%
}\right)  ^{\frac{1}{r_{2}}}\cdot \sup_{i}\left(  \sum_{j=1}%
^{n}1^{\delta}\right)  ^{\frac{1}{\delta}}\\
&  \leq C\left\Vert T\right\Vert n^{\frac{1}{r_{1}}-\frac{1}{q^{\ast}}}.
\end{align*}
To prove the optimality, consider%
\[
A_{n} \colon \ell_{p}\times\ell_{q}\rightarrow\mathbb{K}%
\]
given by
\[
A_{n}(x,y)=x_{1}%
{\textstyle\sum\limits_{j=1}^{n}}
y_{j}.
\]
If%
\[
\left(  \sum_{i=1}^{n}\left(  \sum_{j=1}^{n}\left\vert A_{n}(e_{i}%
,e_{j})\right\vert ^{r_{1}}\right)  ^{\frac{1}{r_{1}}\cdot r_{2}}\right)
^{\frac{1}{r_{2}}}\leq Cn^{t}\left\Vert A_{n}\right\Vert ,
\]
for a certain $t,$ then%
\[
n^{\frac{1}{r_{1}}}\leq Cn^{t}n^{\frac{1}{q^{\ast}}}.
\]
Since $n$ is arbitrary, we conclude that%
\[
t\geq\frac{1}{r_{1}}-\frac{1}{q^{\ast}}.
\]
\noindent (iii) By the H\"{o}lder inequality for mixed sums and Theorem \ref{main}, if%
\[
\frac{1}{\frac{pq}{pq-p-q}}+\frac{1}{\delta}=\frac{1}{r_{2}},
\]
then%
\begin{align*}
\left(  \sum_{i=1}^{n}\left(  \sum_{j=1}^{n}\left\vert T(e_{i}%
,e_{j})\right\vert ^{r_{1}}\right)  ^{\frac{1}{r_{1}}\cdot r_{2}}\right)
^{\frac{1}{r_{2}}}
&  \leq\left(  \sum_{i=1}^{n}\left(  \sum_{j=1}^{n}\left\vert T(e_{i}%
,e_{j})\right\vert ^{r_{1}}\right)  ^{\frac{1}{r_{1}}\cdot\frac{pq}{pq-p-q}%
}\right)  ^{\frac{1}{\frac{pq}{pq-p-q}}}\cdot \left(  \sum_{i=1}^{n}\left ( \sup_{j}1^{\delta
} \right ) \right)  ^{\frac{1}{\delta}}\\
&  \leq C\left\Vert T\right\Vert n^{\frac{1}{r_{2}}-\frac{pq-p-q}{pq}}.
\end{align*}
To prove the optimality, consider%
\[
A_{n} \colon \ell_{p}\times\ell_{q}\rightarrow\mathbb{K}%
\]
given by
\[
A_{n}(x,y)=%
{\textstyle\sum_{j=1}^{n}}
x_{j}y_{j}.
\]
If%
\[
\left(  \sum_{i=1}^{n}\left(  \sum_{j=1}^{n}\left\vert A_{n}(e_{i}%
,e_{j})\right\vert ^{r_{1}}\right)  ^{\frac{1}{r_{1}}\cdot r_{2}}\right)
^{\frac{1}{r_{2}}}\leq Cn^{t}\left\Vert A_{n}\right\Vert
\]
for a certain $t,$ then%
\[
n^{\frac{1}{r_{2}}}\leq Cn^{t}n^{1-\left(  \frac{1}{p}+\frac{1}{q}\right)  }.
\]
Since $n$ is arbitrary, we conclude that%
\[
t\geq\frac{1}{r_{2}}-\frac{pq-p-q}{pq},
\]
and the proposition is proven.
\end{proof}

The case $p=2$ involves a simpler analysis.

\begin{proposition}
\label{pacocamento-rate2} Let $q\in(2,\infty]$ and $\left(  r_{1}%
,r_{2}\right)  $ be not a pair of Hardy--Littlewood exponents, then
\[
\sup\limits_{\mathscr{B}^{n}_{2,q}} \ \left(  \sum_{i=1}^{n}\left(  \sum
_{j=1}^{n}\left\vert T(e_{i},e_{j})\right\vert ^{r_{1}}\right)  ^{\frac
{1}{r_{1}}\cdot r_{2}}\right)  ^{\frac{1}{r_{2}}}\leq\operatorname*{O} \left(
n^{\frac{1}{r_{1}}+\frac{1}{r_{2}}-\frac{1}{q^{\ast}}}\right)  ,
\]
and the exponent $\frac{1}{r_{1}}+\frac{1}{r_{2}}-\frac{1}{q^\ast}$ is optimal.
\end{proposition}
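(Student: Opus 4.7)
The plan is to emulate the case-based strategy of Proposition~\ref{pacocamento-rate1}, simplified substantially by the fact that for $p=2$ one has $\frac{3}{2}-(\tfrac{1}{p}+\tfrac{1}{q})=\tfrac{1}{q^*}$ and $\tfrac{2p}{p-2}=\infty$, so that region (R3) disappears and the non-admissible sub-regions amalgamate into the uniform blow-up rate $\tfrac{1}{r_{1}}+\tfrac{1}{r_{2}}-\tfrac{1}{q^*}$. The upper bound will be derived from H\"older's inequality for mixed sums, applied after reducing to an admissible Hardy--Littlewood pair furnished by Theorem~\ref{main}, and the sharpness will follow from the Kahane--Salem--Zygmund inequality (Theorem~\ref{sss}).

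For the upper bound I would exploit the two admissible endpoints available in the $p=2$ regime: the pair $(q^{\ast},\infty)$, coming from the inequality \eqref{mmmm8} recorded in the proof of Theorem~\ref{main}, and the pair $(2,\tfrac{2q}{q-2})$, which is the $p=2$ instance of the classical Hardy--Littlewood bound used in the ``$\theta=0$'' step of that same proof. If $r_{1}<q^{\ast}$, I would H\"older the inner sum so as to promote its exponent from $r_{1}$ to $q^{\ast}$ (introducing $n^{1/r_{1}-1/q^{\ast}}$) and then bound the outer sum trivially through $\bigl(\sum_{i}a_{i}^{r_{2}}\bigr)^{1/r_{2}}\leq n^{1/r_{2}}\sup_{i}a_{i}$ (adding $n^{1/r_{2}}$). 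If instead $q^{\ast}\leq r_{1}\leq 2$ and $\tfrac{1}{r_{1}}+\tfrac{1}{r_{2}}>\tfrac{1}{q^{\ast}}$, I would set $b:=\tfrac{q^{\ast}r_{1}}{r_{1}-q^{\ast}}$ so that $(r_{1},b)$ sits on the admissible critical segment between $(q^{\ast},\infty)$ and $(2,\tfrac{2q}{q-2})$; H\"older on the outer sum then picks up precisely $n^{1/r_{2}-1/b}=n^{1/r_{1}+1/r_{2}-1/q^{\ast}}$ while reducing the remaining estimate to the admissible one from Theorem~\ref{main}. In both sub-cases the excess $n$-factor is exactly the announced exponent.

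For the optimality I would test the candidate inequality on the Kahane--Salem--Zygmund bilinear form $A_{n}\colon \ell_{2}^{n}\times\ell_{q}^{n}\to\mathbb{K}$ supplied by Theorem~\ref{sss}. Since $\|A_{n}\|\leq C\,n^{3/2-(1/p+1/q)}=C\,n^{1/q^{\ast}}$ while $|A_{n}(e_{i},e_{j})|=1$ for every $i,j$, the left-hand side of the target inequality evaluates to $n^{1/r_{1}+1/r_{2}}$; imposing a hypothetical bound $n^{1/r_{1}+1/r_{2}}\leq C'\,n^{t}\|A_{n}\|$ and letting $n\to\infty$ forces $t\geq\tfrac{1}{r_{1}}+\tfrac{1}{r_{2}}-\tfrac{1}{q^{\ast}}$, which matches the upper bound and yields sharpness.

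The hard part will be to identify, in each sub-region, the admissible pair toward which the H\"older reduction must point, so that the excess $n$-factor produced by H\"older lines up exactly with the Kahane--Salem--Zygmund lower-bound exponent $\tfrac{1}{r_{1}}+\tfrac{1}{r_{2}}-\tfrac{1}{q^{\ast}}$. This matching is the essence of the argument and is the $p=2$ analogue of the more involved case distinction performed in Proposition~\ref{pacocamento-rate1}; no tool beyond Theorem~\ref{main}, Theorem~\ref{sss} and the endpoint estimate \eqref{mmmm8} is needed.
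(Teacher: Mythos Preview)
Your approach is essentially the paper's own: both split according to $r_{1}<q^{\ast}$ (H\"older the inner sum up to $q^{\ast}$ and invoke \eqref{mmmm8}) versus $r_{1}\geq q^{\ast}$ (H\"older the outer sum down to the critical pair $(r_{1},X)$ with $\tfrac{1}{X}=\tfrac{1}{q^{\ast}}-\tfrac{1}{r_{1}}$ and invoke Theorem~\ref{main}); your $b$ is precisely the paper's $X$, and the optimality argument via the Kahane--Salem--Zygmund form is identical.

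The only point worth flagging is that you explicitly restrict the second case to $q^{\ast}\leq r_{1}\leq 2$, so the sub-region $r_{1}>2$, $r_{2}<\tfrac{2q}{q-2}$ (the $p=2$ analogue of (R4)) is not covered by your write-up. The paper writes ``$r_{1}\geq q^{\ast}$'' without this cap, but its argument is the same as yours and likewise requires $(r_{1},X)$ to be admissible in Theorem~\ref{main}; that forces $X\geq\tfrac{2q}{q-2}$, i.e.\ $r_{1}\leq 2$. So your restriction is in fact the honest one, and that residual sub-region is not handled by either proof as written.
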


\begin{proof}
If $r_{1}<q^{\ast},$  we set $X$ to satisfy
\[
\frac{1}{X}+\frac{1}{q^{\ast}}=\frac{1}{r_{1}}.
\]
From H\"{o}lder inequality for mixed sums and (\ref{mmmm8}), we reach
\begin{align*}
\left(  \sum_{i=1}^{n}\left(  \sum_{j=1}^{n}\left\vert T(e_{i}%
,e_{j})\right\vert ^{r_{1}}\right)  ^{\frac{1}{r_{1}}\cdot r_{2}}\right)
^{\frac{1}{r_{2}}}
&  \leq\sup_{i}\left(  \sum_{j=1}^{n}\left\vert T(e_{i},e_{j})\right\vert
^{q^{\ast}}\right)  ^{\frac{1}{q^{\ast}}}\cdot\left(  \sum_{i=1}^{n}\left(
\sum_{j=1}^{n}1^{X}\right)  ^{\frac{1}{X}\cdot r_{2}}\right)  ^{\frac
{1}{r_{2}}}\\
&  \leq C_{0}\left\Vert T\right\Vert n^{\frac{1}{r_{1}}+\frac{1}{r_{2}}-\frac
{1}{q^{\ast}}}.
\end{align*}
If $r_{1}\geq q^{\ast},$ we set $X$ and $Y$ through the equalities
\begin{align*}
\frac{1}{r_{1}}+\frac{1}{X}  &  =\frac{1}{q^{\ast}},\\
\frac{1}{Y}+\frac{1}{X}  &  =\frac{1}{r_{2}}.%
\end{align*}
Applying once more H\"{o}lder inequality for mixed sums and Theorem \ref{main}, we obtain a constant $C$ such that
\begin{align*}
\left(  \sum_{i=1}^{n}\left(  \sum_{j=1}^{n}\left\vert T(e_{i}%
,e_{j})\right\vert ^{r_{1}}\right)  ^{\frac{1}{r_{1}}\cdot r_{2}}\right)
^{\frac{1}{r_{2}}}
&  \leq\left(  \sum_{i=1}^{n}\left(  \sum_{j=1}^{n}\left\vert T(e_{i}%
,e_{j})\right\vert ^{r_{1}}\right)  ^{\frac{1}{r_{1}}\cdot X}\right)
^{\frac{1}{X}}\cdot \left(\sum_{i=1}^{n}  \left(\sup_{j}1\right)^{Y}\right)  ^{\frac{1}{Y}%
}\\
&  \leq C\left\Vert T\right\Vert n^{\frac{1}{r_{1}}+\frac{1}{r_{2}}-\frac
{1}{q^{\ast}}}.
\end{align*}
The optimality is proved as in Proposition \ref{pacocamento-rate1}
\end{proof}

{When $p,q>1$ are such that
\[
\frac{1}{p}+\frac{1}{q}\geq1,
\]
it is easy to verify that there is no admissible exponent for which
\eqref{ineq-blow-upSect} remains universally bounded. Next, we investigate the
dimensional dependence in this case and, for that, it is useful to denote
\[
\mathscr{B}_{p,q}^{n_{1},n_{2}}:=\left\{  T\colon\ell_{p}^{n_{1}}\times
\ell_{q}^{n_{2}}\rightarrow\mathbb{K}\ :\ T\text{ is a bilinear form and
}\Vert T\Vert=1\right\}  .
\]
}

\begin{proposition}
Let $p,q>1$ be such that $\frac{1}{p}+\frac{1}{q}\geq1$. There holds
\[
\sup\limits_{\mathscr{B}_{p,q}^{n_{1},n_{2}}}\ \left(  \sum_{i=1}^{n_{1}%
}\left(  \sum_{j=1}^{n_{2}}\left\vert T(e_{i},e_{j})\right\vert ^{r_{1}%
}\right)  ^{\frac{1}{r_{1}}\cdot r_{2}}\right)  ^{\frac{1}{r_{2}}%
}=\operatorname*{O}\left(  n_{1}^{\frac{1}{r_{2}}}\right)  \cdot
\operatorname*{O}\left(  n_{2}^{\frac{1}{r_{1}}-\frac{1}{\max\left\{
r_{1},q^{\ast}\right\}  }}\right)  ,
\]
and such a blow-up rate of $n_{2}$ is sharp and the blow up rate of $n_{1}$ is
sharp when $r_{1}\geq q^{\ast}$.

\end{proposition}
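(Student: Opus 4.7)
The argument splits naturally into the upper bound and two sharpness claims. For the upper bound, the plan is to slice horizontally: for each fixed $i$, the partial map $T(e_i,\cdot)$ is a continuous linear functional on $\ell_q^{n_2}$ whose norm is bounded by $\|T\|$, and therefore $\bigl(\sum_{j=1}^{n_2}|T(e_i,e_j)|^{q^*}\bigr)^{1/q^*}\leq \|T\|$. To convert this into control of the $\ell^{r_1}$-norm on the inner index, I split into two sub-cases: if $r_1\geq q^*$, then on finite sequences $\|\cdot\|_{r_1}\leq\|\cdot\|_{q^*}$, so the inner estimate remains $\|T\|$; if $r_1<q^*$, H\"older's inequality inserts the factor $n_2^{1/r_1-1/q^*}$. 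The unified bound on the inner sum is $n_2^{1/r_1-1/\max\{r_1,q^*\}}\|T\|$, and a trivial $\ell^{r_2}$-summation over $i=1,\ldots,n_1$ contributes the remaining $n_1^{1/r_2}$, yielding the claimed product upper bound.

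The sharpness of the $n_2$ rate is verified by explicit extremal examples. When $r_1\geq q^*$ the target exponent is zero, so any fixed nonzero form such as $T(x,y)=x_1 y_1$ witnesses the constant lower bound. When $r_1<q^*$, I test with $T_{n_2}(x,y)=n_2^{-1/q^*}x_1\sum_{j=1}^{n_2}y_j$; verifying that $\|T_{n_2}\|=1$ is direct (equality is attained at $x=e_1$ and $y=n_2^{-1/q}(1,\ldots,1)$), and a short computation shows the mixed sum equals $n_2^{1/r_1-1/q^*}$, matching the upper bound exactly.

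For the sharpness in $n_1$ when $r_1\geq q^*$, the critical observation is that the hypothesis $\tfrac{1}{p}+\tfrac{1}{q}\geq 1$ is equivalent to $q\leq p^*$, which guarantees that the diagonal pairing $T(x,y)=\sum_{i=1}^{\min(n_1,n_2)}x_i y_i$ has operator norm exactly $1$ (apply H\"older's inequality together with the inclusion $\ell^q\subset\ell^{p^*}$ on the restricted index set, then test at $(e_1,e_1)$). Choosing a subsequence along which $n_2\geq n_1$, one reads off that the mixed sum equals $n_1^{1/r_2}$, which matches the upper bound. The genuine technical point I expect to be the main obstacle is precisely this boundary behaviour: the diagonal form sits at the threshold of boundedness within the forbidden Hardy--Littlewood regime $\tfrac{1}{p}+\tfrac{1}{q}\geq 1$, and this asymmetric geometric feature is what explains the asymmetric roles of $n_1$ and $n_2$ in the final rate.
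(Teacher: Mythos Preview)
Your proof is correct and follows essentially the same route as the paper's. The paper packages the slicing step as the statement that every such $T$ is multiple $(\infty,\max\{r_1,q^*\};p^*,q^*)$-summing and then applies the H\"older inequality for mixed sums, whereas you unpack this directly via the duality $(\ell_q^{n_2})^* = \ell_{q^*}^{n_2}$ applied to $T(e_i,\cdot)$ and a case split on $r_1$ versus $q^*$; these are the same argument in different clothing. For sharpness both you and the paper use the rank-one form $x_1\sum_j y_j$ (you normalise it, the paper does not) and the diagonal form $\sum_i x_i y_i$, with the key computation $\|A_n\|=1$ resting on exactly the observation you make, namely that $\tfrac{1}{p}+\tfrac{1}{q}\ge 1$ is equivalent to $q\le p^*$.
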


\begin{proof}
Under the condition $\frac{1}{p}+\frac{1}{q}\geq1$, it is simple to check that
any $T\in\mathscr{B}_{p,q}^{n_{1},n_{2}}$ is multiple $\left(  \infty
,\max\left\{  r_{1},q^{\ast}\right\}  ;p^{\ast},q^{\ast}\right)  $-summing.
Let us denote $T(e_{i},e_{j})=T_{ij}$. By Hölder inequality for mixed sums, if%
\[
\frac{1}{r_{1}}=\frac{1}{t_{1}}+\frac{1}{\max\left\{  r_{1},q^{\ast}\right\}
},
\]
then%
\begin{align*}
\left(  \sum_{i=1}^{n_{1}}\left(  \sum_{j=1}^{n_{2}}\left\vert T_{ij}%
\right\vert ^{r_{1}}\right)  ^{\frac{1}{r_{1}}\cdot r_{2}}\right)  ^{\frac
{1}{r_{2}}} &  \leq\left(  \sum_{i=1}^{n_{1}}\left(  \sum_{j=1}^{n_{2}%
}\left\vert 1\right\vert ^{t_{1}}\right)  ^{\frac{1}{t_{1}}\cdot r_{2}%
}\right)  ^{\frac{1}{r_{2}}}\cdot\sup_{i}\left(  \sum_{j=1}^{n_{2}}\left\vert
T_{ij}\right\vert ^{\max\left\{  r_{1},q^{\ast}\right\}  }\right)  ^{\frac
{1}{\max\left\{  r_{1},q^{\ast}\right\}  }}\\
&  \leq C\left\Vert T\right\Vert n_{1}^{\frac{1}{r_{2}}}n_{2}^{\frac{1}{r_{1}%
}-\frac{1}{\max\left\{  r_{1},q^{\ast}\right\}  }}.
\end{align*}
To prove the optimality of the exponent $\frac{1}{r_{1}}-\frac{1}{\max\left\{
r_{1},q^{\ast}\right\}  }$ we just need to consider $r_{1}\leq q^{\ast};$ let
us consider%
\[
A_{n_{2}}\colon\ell_{p}\times\ell_{q}\rightarrow\mathbb{K}%
\]
given by
\[
A_{n_{2}}(x,y)=x_{1}%
{\textstyle\sum\limits_{j=1}^{n_{2}}}
y_{j}.
\]
If%
\[
\left(  \sum_{i=1}^{n_{1}}\left(  \sum_{j=1}^{n_{2}}\left\vert A_{n_{2}}%
(e_{i},e_{j})\right\vert ^{r_{1}}\right)  ^{\frac{1}{r_{1}}\cdot r_{2}%
}\right)  ^{\frac{1}{r_{2}}}\leq Cn_{1}^{t_{1}}n_{2}^{t_{2}}\left\Vert
A_{n_{2}}\right\Vert ,
\]
for certain $t_{1},t_{2},$ then%
\[
n_{2}^{\frac{1}{r_{1}}}\leq Cn_{1}^{t_{1}}n_{2}^{t_{2}}n_{2}^{\frac{1}%
{q^{\ast}}}.
\]
Since $n_{2}$ is arbitrary, we conclude that%
\[
t_{2}\geq\frac{1}{r_{1}}-\frac{1}{q^{\ast}}=\frac{1}{r_{1}}-\frac{1}%
{\max\left\{  r_{1},q^{\ast}\right\}  }.
\]
Now let us prove the optimality of the exponent $\frac{1}{r_{2}}$ of $n_{1}$ when
$r_{1}\geq q^{\ast}.$ Consider
\[
A_{n}\colon\ell_{p}\times\ell_{q}\rightarrow\mathbb{K}%
\]
given by
\[
A_{n}(x,y)=%
{\textstyle\sum\limits_{j=1}^{n}}
x_{j}y_{j}.
\]
Since $\left\Vert A_{n}\right\Vert =1$, if%
\[
\left(  \sum_{i=1}^{n_{1}}\left(  \sum_{j=1}^{n_{2}}\left\vert A_{n}%
(e_{i},e_{j})\right\vert ^{r_{1}}\right)  ^{\frac{1}{r_{1}}\cdot r_{2}%
}\right)  ^{\frac{1}{r_{2}}}\leq Cn_{1}^{t_{1}}\left\Vert A_{n}\right\Vert ,
\]
for a certain $t_{1}$ and all $n_{1},n_{2}$, then considering $n_{1}=n_{2}=n$
we have
\[
n^{\frac{1}{r_{2}}}\leq Cn^{t_{1}}.
\]
Since $n$ is arbitrary, we find%
\[
t_{1}\geq\frac{1}{r_{2}},
\]
which concludes the proof.
\end{proof}

\section{Remarks on the multilinear case \label{mul}}

In this final Section we comment on the sharp exponent problem for multilinear
versions of the Hardy-Littlewood inequality, in the spirit of \cite{pra}.
Quite recently Dimant and Sevilla-Peris established the existence of a
constant $C_{m,p}\geq1$, such that
\begin{equation}
\left(  \sum\limits_{i_{1},\ldots,i_{m}=1}^{\infty}\left\vert T(e_{i_{^{1}}%
},\ldots,e_{i_{m}})\right\vert ^{\frac{p}{p-m}}\right)  ^{\frac{p-m}{p}}\leq
C_{m,p}\left\Vert T\right\Vert \label{6kk}%
\end{equation}
for all $m$-linear operators $T \colon\ell_{p}^{n}\times\cdots\times\ell
_{p}^{n}\rightarrow\mathbb{K}$, with $m\ge1$ and $p\in(m,2m).$ Furthermore,
they have also shown that the exponent
\[
\mathfrak{e} := \frac{p}{p-m}
\]
is sharp in the sense that it cannot be replaced by any $a<\frac{p}{p-m}$ in
(\ref{6kk}).

As the condition $p<2m$ is in order, it follows readily that
\[
\frac{1}{\frac{p}{p-m}}+\cdots+\frac{1}{\frac{p}{p-m}}=\frac{m\left(
p-m\right)  }{p}<\frac{m+1}{2}-\frac{m}{p},
\]
which, having in mind the Kahane--Salem--Zygmund inequality, indicates that
the optimal exponents $\frac{p}{p-m}$ seem to be sub-optimal in the
anisotropic setting. The main result of this section shows, in particular,
that in fact the optimal exponents $\frac{p}{p-m}$ are not optimal in the
anisotropic stronger sense.

Next definition seems to play a decisive role in the sharp exponent problem
for multilinear operators.

\begin{definition}
\label{Def.GS} An $m$-uple of exponents $\left(  q_{1},...,q_{m}\right)  $ for
which a Hardy--Littlewood type inequality holds and that for any
$\varepsilon_{j}>0$ and any $j=1,...,m$, there is no Hardy--Littlewood
inequality for the $m$-uple of exponents
\[
\left(  q_{1},...,q_{j-1} ,q_{j}-\varepsilon_{j},q_{j+1},...,q_{m}\right)
\]
is called globally sharp.
\end{definition}

\bigskip

A careful application of the tools and reasoning developed in Section
\ref{Sct RP}, combined with techniques from the theory of absolutely summing
operators, \cite{lindenstrauss}, yields the following result, that extends the
reach of (\ref{6kk}) and Theorem \ref{BH_HL} with globally sharp exponents:

\begin{theorem}
\label{qq2211} Let $m\ge 3$ be a positive integer and $p\geq2m-2$ be a real number.
Then there is a constant $C_{m,p}\geq1$ such that%
\[
\left(  \sum_{i_{1}=1}^{n}\left(  \sum_{i_{2},...,i_{m}=1}^{n}\left\vert
T(e_{i_{1}},...,e_{i_{m}})\right\vert ^{\frac{2\left(  m-1\right)  p}%
{mp-2m+2}}\right)  ^{\frac{mp-2m+2}{2\left(  m-1\right)  p}\times\frac
{2p}{p-2}}\right)  ^{\frac{p-2}{2p}}\leq C_{m,p}\left\Vert T\right\Vert
\]
for all $m$-linear operators $T:\ell_{p}^{n}\times\cdots\times\ell_{p}%
^{n}\rightarrow\mathbb{K}$ and all positive integers $n$. Moreover the
$m$-uple of exponents $\left(  \frac{2p}{p-2},\frac{2\left(  m-1\right)
p}{mp-2m+2},\overset{m-1\text{ times}}{...},\frac{2\left(  m-1\right)
p}{mp-2m+2}\right)  $ is globally sharp.
\end{theorem}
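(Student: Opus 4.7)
The plan is to combine the anisotropic Bohnenblust--Hille inequality on $c_{0}^{m}$, provided by Theorem \ref{BH_HL}, with the Regularity Principle of Section \ref{Sct RP} to manufacture the desired asymmetric Hardy--Littlewood estimate, and then to deduce global sharpness from the Kahane--Salem--Zygmund inequality. A direct computation shows that the proposed exponents satisfy
\[
\frac{1}{q_{1}}+(m-1)\frac{1}{q_{2}}=\frac{p-2}{2p}+\frac{mp-2m+2}{2p}=\frac{m+1}{2}-\frac{m}{p},
\]
where $q_{1}=\frac{2p}{p-2}$ and $q_{2}=\frac{2(m-1)p}{mp-2m+2}$; hence the proposed $m$-tuple lies precisely on the critical Bohnenblust--Hille line, which is what singles it out as a candidate for a globally sharp tuple.

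First I would invoke Theorem \ref{BH_HL} with $\mathbf{p}=(\infty,\ldots,\infty)$ and exponents $\bigl(2,\tfrac{2(m-1)}{m},\ldots,\tfrac{2(m-1)}{m}\bigr)$, whose reciprocals already sum to the critical $\tfrac{m+1}{2}$. Translating through Theorem \ref{yh} in the limiting case $p_{k}^{\ast}=1$, one obtains that every continuous $m$-linear form on arbitrary Banach spaces is multiple $\bigl(2,\tfrac{2(m-1)}{m},\ldots,\tfrac{2(m-1)}{m};1,\ldots,1\bigr)$-summing.

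Next, I would upgrade the weak-summability exponent of each variable from $1$ to $p^{\ast}$ by iteratively applying the Anisotropic Regularity Principle (Theorem \ref{Ranisot})---or, more accurately, the $m$-variable analogue of it which arises from the same technique, as pointed out in the remark following Theorem \ref{RP3}. In the first iteration the pair $(r_{1},p_{1})=(1,2)$ is boosted to $(r_{3},p_{3})=\bigl(p^{\ast},\tfrac{2p}{p-2}\bigr)$, while in each of the remaining $m-1$ iterations the pair $\bigl(1,\tfrac{2(m-1)}{m}\bigr)$ is boosted to $\bigl(p^{\ast},\tfrac{2(m-1)p}{mp-2m+2}\bigr)$. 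A direct check shows that the structural constraint $\tfrac{1}{r_{1}}-\tfrac{1}{p_{1}}\leq\tfrac{1}{r_{3}}-\tfrac{1}{p_{3}}$ is met with equality in every iteration; this tightness is what forces each resulting exponent to be optimal, and it is also where the assumption $p\geq 2m-2$ becomes relevant, since that is exactly the range where the boosted inner exponent $\tfrac{2(m-1)p}{mp-2m+2}$ remains in $(0,2]$, which is the natural operating window produced by the regularity argument. Composing the $m$ iterations and reinterpreting the resulting multiple-summing estimate via Theorem \ref{yh}, applied with $x_{j_{k}}^{(k)}=e_{j_{k}}\in\ell_{p}^{n}$ (whose weak $p^{\ast}$-norm equals $1$), would yield the claimed Hardy--Littlewood inequality.

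Finally, for global sharpness I would test the inequality on the Kahane--Salem--Zygmund $m$-linear form $A_{n}$ from Theorem \ref{sss}: since $|A_{n}(e_{i_{1}},\ldots,e_{i_{m}})|\equiv 1$, the left-hand side collapses to $n^{\frac{1}{q_{1}}+(m-1)\frac{1}{q_{2}}}$, whereas $\|A_{n}\|\lesssim n^{(m+1)/2-m/p}$. The saturation identity $\tfrac{1}{q_{1}}+(m-1)\tfrac{1}{q_{2}}=\tfrac{m+1}{2}-\tfrac{m}{p}$ then forces the constant to blow up in $n$ the moment any individual $q_{j}$ is decreased, which is precisely global sharpness in the sense of Definition \ref{Def.GS}. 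The main obstacle I anticipate is the bookkeeping in the iterative regularity step: after each boost the updated auxiliary function (a partial $\ell_{q}$-norm of $T$ in a subset of coordinates) must still be checked to meet the hypotheses of the anisotropic principle in the subsequent iteration, and one has to verify that the boundary form of the constraint at each stage pins down \emph{exactly} the target exponents prescribed in the statement of the theorem.
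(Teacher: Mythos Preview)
Your proposal has a genuine gap in the iterative regularity step. The Anisotropic Regularity Principle (Theorem~\ref{Ranisot}) upgrades only the \emph{outermost} sum: it replaces the pair $(p_{1},r_{1})$ attached to the outer index by $(p_{3},r_{3})$, keeping the inner exponent $p_{2}$ fixed. To apply it to an inner coordinate you must first commute that coordinate's sum to the outermost position. After your first iteration, coordinate~$1$ carries exponent $q_{1}=\tfrac{2p}{p-2}>2$ while coordinates $2,\dots,m$ still carry $\alpha=\tfrac{2(m-1)}{m}<2$; Minkowski's inequality (\ref{7j}) then gives $\|\cdot\|_{\ell_{q_{1}}(\ell_{\alpha})}\le\|\cdot\|_{\ell_{\alpha}(\ell_{q_{1}})}$, which is the \emph{wrong} direction to bring the $\ell_{\alpha}$--sum outside. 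The weighting trick behind the regularity principle does not bypass this: the weights $\lambda_{j}$ must depend only on the coordinate being upgraded, so once that coordinate sits inside a larger--exponent sum, the optimization that produces the sharp new exponent breaks down. Reordering the iterations or choosing a permuted Bohnenblust--Hille starting tuple runs into the same obstruction. Your remark that $p\ge 2m-2$ is ``the natural operating window of the regularity argument'' locates the right inequality ($q_{2}\le 2$) but misattributes its role; it is needed for a Minkowski swap, not for the regularity step itself.

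The paper's route is genuinely different and avoids this obstacle. It does \emph{not} start from Bohnenblust--Hille on $c_{0}^{m}$. Instead it fixes weakly $p^{\ast}$--summable sequences in coordinates $2,\dots,m$, applies Khinchine's inequality in the first variable to replace the inner $\ell_{2}$--sum by an average over Rademacher signs, and then invokes Theorem~\ref{BH_HL} for the resulting $(m-1)$--linear form on $\ell_{p}\times\cdots\times\ell_{p}$. This already produces the target inner exponent $q_{2}=\tfrac{2(m-1)p}{mp-2m+2}$ together with the weak--$p^{\ast}$ norms in coordinates $2,\dots,m$, while coordinate~$1$ is controlled by $\|(x^{(1)}_{i_{1}})\|_{w,1}$ via the identity $\|(x_{i})\|_{w,1}=\max_{\varepsilon_{i}=\pm1}\|\sum\varepsilon_{i}x_{i}\|$. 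Because $p\ge 2m-2$ forces $q_{2}\le 2$, Minkowski now legitimately swaps the $\ell_{2}$--sum in $i_{1}$ to the outside, and a \emph{single} application of the Anisotropic Regularity Principle upgrades $(1,2)$ to $(p^{\ast},q_{1})$ in that coordinate. Your argument for global sharpness via Kahane--Salem--Zygmund is correct and matches the paper.
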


\begin{proof}
Let $T:\ell_{p}\times\cdots\times\ell_{p}\rightarrow\mathbb{K}$ be a
continuous $m$-linear form. By the Khinchine inequality, there is a constant
$K_{m,p}\geq1$ such that
\begin{align*}
&  \left(  \sum\limits_{i_{2},\ldots,i_{m}=1}^{n}\left(  \sum\limits_{i_{1}%
=1}^{n}\left\vert T(x_{i_{^{1}}}^{(1)},\ldots,x_{i_{m}}^{(m)})\right\vert
^{2}\right)  ^{\frac{1}{2}\times\frac{2\left(  m-1\right)  p}{\left(
m-1\right)  p+p-2\left(  m-1\right)  }}\right)  ^{\frac{\left(  m-1\right)
p+p-2\left(  m-1\right)  }{2\left(  m-1\right)  p}}\\
&  \leq K_{m,p}\left(  \sum\limits_{i_{2},\ldots,i_{m}=1}^{n}\int
\limits_{0}^{1}\left\vert \sum\limits_{i_{1}=1}^{n}r_{i_{1}}(t)T(x_{i_{^{1}}%
}^{(1)},\ldots,x_{i_{m-1}}^{(m-1)},x_{i_{m}}^{(m)})\right\vert ^{\frac
{2\left(  m-1\right)  p}{\left(  m-1\right)  p+p-2\left(  m-1\right)  }%
}dt\right)  ^{\frac{\left(  m-1\right)  p+p-2\left(  m-1\right)  }{2\left(
m-1\right)  p}}\\
&  =K_{m,p}\left(  \int\limits_{0}^{1}\sum\limits_{i_{2},\ldots,i_{m}=1}%
^{n}\left\vert T\left(  \sum\limits_{i_{1}=1}^{n}r_{i_{1}}(t)x_{i_{^{1}}%
}^{(1)},\ldots,x_{i_{m-1}}^{(m-1)},x_{i_{m}}^{(m)}\right)  \right\vert
^{\frac{2\left(  m-1\right)  p}{\left(  m-1\right)  p+p-2\left(  m-1\right)
}}dt\right)  ^{\frac{\left(  m-1\right)  p+p-2\left(  m-1\right)  }{2\left(
m-1\right)  p}}.
\end{align*}
Combining the previous inequality with Theorem \ref{BH_HL} for the $\left(
m-1\right)  $-linear operator $T(\sum\limits_{i_{1}=1}^{n}r_{i_{1}%
}(t)x_{i_{1}}^{(1)},\cdot,\ldots,\cdot)$ we conclude that there is a constant
$L_{m,p}\geq1$ such that%
\begin{align*}
&  \left(  \sum\limits_{i_{2},\ldots,i_{m}=1}^{n}\left(  \sum\limits_{i_{1}%
=1}^{n}\left\vert T(x_{i_{^{1}}}^{(1)},\ldots,x_{i_{m}}^{(m)})\right\vert
^{2}\right)  ^{\frac{1}{2}\times\frac{2\left(  m-1\right)  p}{\left(
m-1\right)  p+p-2\left(  m-1\right)  }}\right)  ^{\frac{\left(  m-1\right)
p+p-2\left(  m-1\right)  }{2\left(  m-1\right)  p}}\\
&  \leq L_{m,p}\sup_{t\in\lbrack0,1]}\left\Vert T(\sum\limits_{i_{1}%
=1}^{n}r_{i_{1}}(t)x_{i_{1}}^{(1)},\cdot,\ldots,\cdot)\right\Vert
{\textstyle\prod\limits_{j=2}^{m}}
\left\Vert \left(  x_{i_{j}}^{(j)}\right)  _{i_{j}=1}^{n}\right\Vert
_{w,p^{\ast}}\\
&  \leq L_{m,p}\left\Vert T\right\Vert \sup_{t\in\lbrack0,1]}\left\Vert
\sum\limits_{i_{1}=1}^{n}r_{i_{1}}(t)x_{i_{1}}^{(1)}\right\Vert
{\textstyle\prod\limits_{j=2}^{m}}
\left\Vert \left(  x_{i_{j}}^{(j)}\right)  _{i_{j}=1}^{n}\right\Vert
_{w,p^{\ast}}.
\end{align*}
Since (see \cite[page 284]{lindenstrauss})
\[
\left\Vert \left(  x_{i_{1}}^{(1)}\right)  _{i_{1}=1}^{n}\right\Vert
_{w,1}=\max\left\Vert \sum\limits_{i_{1}=1}^{n}\varepsilon_{i_{1}}x_{i_{1}%
}^{(1)}:\varepsilon_{i_{1}}=\pm1,i=1,...,n\right\Vert ,
\]
we have%
\begin{align*}
&  \left(  \sum\limits_{i_{2},\ldots,i_{m}=1}^{n}\left(  \sum\limits_{i_{1}%
=1}^{n}\left\vert T(x_{i_{^{1}}}^{(1)},\ldots,x_{i_{m}}^{(m)})\right\vert
^{2}\right)  ^{\frac{1}{2}\times\frac{2\left(  m-1\right)  }{\left(
m-1\right)  p+p-2\left(  m-1\right)  }}\right)  ^{\frac{\left(  m-1\right)
p+p-2\left(  m-1\right)  }{2\left(  m-1\right)  }}\\
&  \leq L_{m,p}\left\Vert T\right\Vert \left\Vert \left(  x_{i_{1}}%
^{(1)}\right)  _{i_{1}=1}^{n}\right\Vert _{w,1}%
{\textstyle\prod\limits_{j=2}^{m}}
\left\Vert \left(  x_{i_{j}}^{(j)}\right)  _{i_{j}=1}^{n}\right\Vert
_{w,p^{\ast}}.
\end{align*}
Since $p\geq2m-2,$ by the Minkowski inequality (\ref{7j}) we conclude that%
\begin{align*}
&  \left(  \sum\limits_{i_{1}=1}^{n}\left(  \sum\limits_{i_{2},\ldots,i_{m}%
=1}^{n}\left\vert T(x_{i_{^{1}}}^{(1)},\ldots,x_{i_{m}}^{(m)})\right\vert
^{\frac{2\left(  m-1\right)  p}{\left(  m-1\right)  p+p-2\left(  m-1\right)
}}\right)  ^{\frac{\left(  m-1\right)  p+p-2\left(  m-1\right)  }{2\left(
m-1\right)  p}\times2}\right)  ^{\frac{1}{2}}\\
&  \leq L_{m,p}\left\Vert T\right\Vert \left\Vert \left(  x_{i_{1}}%
^{(1)}\right)  _{i_{1}=1}^{n}\right\Vert _{w,1}%
{\textstyle\prod\limits_{j=2}^{m}}
\left\Vert \left(  x_{i_{j}}^{(j)}\right)  _{i_{j}=1}^{n}\right\Vert
_{w,p^{\ast}}.
\end{align*}
Thus, $T$ is multiple $\left(  2,\frac{2\left(  m-1\right)  p}{mp-2\left(
m-1\right)  },...,\frac{2\left(  m-1\right)  p}{mp-2\left(  m-1\right)
};1,p^{\ast},...,p^{\ast}\right)  $-summing. By the Anisotropic Regularity Principle we conclude
that%
\begin{align*}
&  \left(  \sum\limits_{i_{1}=1}^{n}\left(  \sum\limits_{i_{2},\ldots,i_{m}%
=1}^{n}\left\vert T(x_{i_{^{1}}}^{(1)},\ldots,x_{i_{m}}^{(m)})\right\vert
^{\frac{2\left(  m-1\right)  p}{\left(  m-1\right)  p+p-2\left(  m-1\right)
}}\right)  ^{\frac{\left(  m-1\right)  p+p-2\left(  m-1\right)  }{2\left(
m-1\right)  p}\times\frac{2p}{p-2}}\right)  ^{\frac{p-2}{2p}}\\
&  \leq L_{m,p}\left\Vert T\right\Vert
{\textstyle\prod\limits_{j=1}^{m}}
\left\Vert \left(  x_{i_{j}}^{(j)}\right)  _{i_{j}=1}^{n}\right\Vert
_{w,p^{\ast}}%
\end{align*}
and, recalling Theorem \ref{yh}, the proof is done.
Since%
\[
\frac{1}{\frac{2\left(  m-1\right)  p}{\left(  m-1\right)  p+p-2\left(
m-1\right)  }}+\overset{m-1\text{ times}}{...}+\frac{1}{\frac{2\left(
m-1\right)  p}{\left(  m-1\right)  p+p-2\left(  m-1\right)  }}+\frac{1}%
{\frac{2p}{p-2}}=\frac{m+1}{2}-\frac{m}{p},
\]
by the Kahane--Salem--Zygmund we conclude that $\left(  \frac{2p}{p-2}%
,\frac{2\left(  m-1\right)  p}{mp-2m+2},\overset{m-1\text{ times}}{...}%
,\frac{2\left(  m-1\right)  p}{mp-2m+2}\right)  $ is globally sharp.
\end{proof}

\medskip

It is worth to note that Theorem \ref{qq2211} provides new $m$-uples of
optimal exponents for multilinear Hardy--Littlewood type inequalities that
were not covered by the preceding literature.


\bigskip

\noindent\textsc{Daniel M. Pellegrino} \hfill\textsc{Joedson Santos}\newline
Department of Mathematics \hfill Department of Mathematics \newline
Universidade Federal da Para\'iba \hfill Universidade Federal da Para\'iba
\newline58.051-900 Jo\~ao Pessoa -- PB, Brazil. \hfill58.051-900 Jo\~ao Pessoa
-- PB, Brazil. \smallskip\newline\texttt{pellegrino@pq.cnpq.br} \hfill
\texttt{joedson@mat.ufpb.br}

\bigskip

\noindent\textsc{Diana Serrano-Rodr\'iguez} \hfill\textsc{Eduardo V.
Teixeira}\newline Department of Mathematics \hfill Department of Mathematics
\newline Kent State University \hfill Universidade Federal do Cear\'a
\newline44242 Kent -- Ohio, USA \hfill60.455-760 Fortaleza -- CE, Brazil
\smallskip\newline\texttt{dserrano@kent.edu} \hfill
\texttt{teixeira@mat.ufc.br}



\end{document}